\newtheorem{corollary}{Corollary}
\newtheorem{theorem}{Theorem}
\newtheorem{lemma}{Lemma}
\newtheorem{remark}{Remark}
\newtheorem{definition}{Definition}
\newtheorem{assumption}{Assumption}
\numberwithin{equation}{section} 
\numberwithin{lemma}{section} 
\numberwithin{theorem}{section} 
\numberwithin{definition}{section} 
\numberwithin{corollary}{section} 
\DeclareMathOperator*{\argmin}{arg\,min}
\newcommand{\LH}{L_H}
\newcommand{\kappaS}{\kappa_S}
\newcommand{\logOneOverDeltaS}{\log \bracket{\frac{1}{\deltaS}}}
\newcommand{\nPreFactorTRCubic}{\left[\frac{7}{8}(1-\delta_1) - 1 + \frac{c}{(c+1)^2} \right]^{-1}}
\newcommand{\deltaSThreeExpression}{\pow{e}{-\frac{l(\epSTwo)^2}{C_l} + r +1 }}
\newcommand{\newL}{\tau_{\alpha}}
\newcommand{\epsOne}{\epS}
\newcommand{\nThreeEpH}{N_{\epH}^{(3)}}
\newcommand{\nTwoEpsH}{N_{\epH}^{(2)}}
\newcommand{\epH}{\epsilon_{H}}
\newcommand{\epsHalf}{\epsilon^{\frac{1}{2} }  }
\newcommand{\oneMinusEpSHalf}{(1-\epS)^{\frac{1}{2}}}
\newcommand{\NsKTSKHat}{\normTwo{S_k^T \sKHat}}
\newcommand{\deltaSThree}{\deltaS^{(3)}}
\newcommand{\fZeroMinusfStarOverH}{\frac{f(x_0) - f^*}{h(\epsilon, \alphaZero\gammaOne^{c+\newL}
)}}
\newcommand{\fun}[2]{#1\bracket{#2}}
\newcommand{\pow}[2]{#1^{#2}}
\newcommand{\impliesSince}[1]{\overset{#1}{\implies}}
\newcommand{\hK}{\grad^2 f(x_k)}
\newcommand{\qKHat}[1]{\hat{q}_k \bracket{#1}}
\newcommand{\epSOne}{\epsilon_S^{(1)}}
\newcommand{\epSTwo}{\epsilon_S^{(2)}}
\newcommand{\MK}{M_k}
\newcommand{\hessFK}{\grad^2 f(x_k)}
\newcommand{\SK}{S_k}
\newcommand{\oneMinusEpSTwo}{1 - \epSTwo}
\newcommand{\sqrtFrac}[2]{\sqrt{\frac{#1}{#2}}}
\newcommand{\gDeltaSDeltaOne}{g(\deltaS, \deltaOne)}
\newcommand{\mathO}[1]{\mathcal{O}\left( #1\right)}
\newcommand{\N}{\mathbb{N}}
\newcommand{\R}{\mathbb{R}}
\renewcommand{\P}{\mathbb{P}}
\newcommand{\alphaLow}{\alpha_{low}}
\newcommand{\nEps}{N_{\epsilon}}
\newcommand{\oneOverGammaOneC}{\frac{1}{\gamma_1^c}}
\newcommand{\conditionalP}[2]{\P \left[ #1 | #2 \right]}
\newcommand{\chernoffLowerExponential}{e^{-\frac{\delta_1^2}{2} (1-\delta_S) N}}
\newcommand{\nPreFactorTR}{\left[(1-\delta_S)(1-\delta_1) - 1 + \frac{c}{(c+1)^2} \right]^{-1}}
\newcommand{\twoCases}[4]{         \begin{cases}
             #1, & #2 \\
             #3, & #4.
        \end{cases} }
\newcommand{\sHat}{\hat{s}}
\newcommand{\mKHat}[1]{\hat{m}_k\left( #1\right)}
\newcommand{\xK}{x_k}
\newcommand{\xKPlusOne}{x_{k+1}}
\newcommand{\sK}{s_k}
\newcommand{\normTwo}[1]{\left\lVert#1\right\rVert_2}
\newcommand{\gradFK}{\grad f(\xK)}
\newcommand{\gammaOneC}{\gamma_1^c}
\newcommand{\fK}{f(x_k)}
\newcommand{\innerProduct}[2]{\langle #1, #2 \rangle}
\newcommand{\rLTimesD}{\R^{l\times d}}
\newcommand{\fKPlusOne}{f(x_k + s_k)}
\newcommand{\epS}{\epsilon_S}
\newcommand{\sMax}{S_{\small{max}}}
\newcommand{\alphaMax}{\alpha_{\small{max}}}
\newcommand{\alphaK}{\alpha_k}
\newcommand{\deltaSTwo}{\delta_S^{(2)}}
\newcommand{\aKOne}{A_k^{(1)}}
\newcommand{\aKTwo}{A_k^{(2)}}
\newcommand{\intersect}{\cap}
\newcommand{\barXK}{\bar{x}_k}
\newcommand{\probabilityGivenXK}[1]{\probability{#1 | x_k = \barXK}}
\renewcommand{\complement}[1]{\left( #1\right)^c}
\renewcommand{\abs}[1]{\lvert #1\rvert}
\newcommand{\alphaKPlusOne}{\alpha_{k+1}}
\newcommand{\gammaOne}{\gamma_1}
\newcommand{\gammaTwo}{\gamma_2}
\newcommand{\alphaZero}{\alpha_0}
\newcommand{\alphaMin}{\alpha_{\small{min}}}
\newcommand{\ceil}[1]{\left \lceil{#1}\right \rceil}
\newcommand{\logBaseGammaOne}[1]{\log_{\gammaOne}\left( #1\right)}
\newcommand{\minMe}[2]{\min \set{#1, #2}}
\newcommand{\set}[1]{\left\{ #1 \right\}}
\newcommand{\probability}[1]{\P \left( #1 \right)}
\renewcommand{\complement}[1]{\left( #1 \right)^c}
\newcommand{\squareBracket}[1]{\left[ #1 \right]}
\newcommand{\bracket}[1]{\left( #1\right)}
\newcommand{\sKHat}{\hat{s}_k}
\newcommand{\kappaT}{\kappa_T}
\newcommand{\deltaOne}{\delta_1}
\newcommand{\texteq}[1]{\text{\quad #1}}
\newcommand{\lambdaMin}[1]{\lambda_{min} \bracket{#1}}
\newcommand{\zK}{z^{k}}
\newcommand{\aZeroOne}{A_0^{(1)}}
\newcommand{\aZeroTwo}{A_0^{(2)}}
\newcommand{\logOneOverDeltaSTwo}{\log\bracket{1/\deltaSTwo}}
\newcommand{\deltaS}{\delta_S}
\newcommand{\reply}[1]{#1}
\numberwithin{equation}{section}
\title{Random Subspace Cubic-Regularization Methods, with Applications to Low-Rank Functions}
\author{Coralia Cartis\thanks{Mathematical Institute, Woodstock Road, University of Oxford, Oxford, UK, OX2 6GG.   {\tt cartis@maths.ox.ac.uk.} This author's work was supported by the InnoHK CIMDA.},\quad Zhen Shao\thanks{Mathematical Institute, Woodstock Road, University of Oxford, Oxford, UK, OX2 6GG.  {\tt shaoz@maths.ox.ac.uk.} This author's work was supported by NAG Ltd and the Infomm CDT.} \quad and\quad Edward Tansley\thanks{Mathematical Institute, Woodstock Road, University of Oxford, Oxford, UK, OX2 6GG.  {\tt tansley@maths.ox.ac.uk.} This author's work was supported by the Mathematics of Random Systems CDT.}}
\date{December 30, 2024}
\begin{document}

\maketitle

\begin{abstract}
    We propose and analyze random subspace variants of the 
    second-order Adaptive Regularization using Cubics (ARC) algorithm. 
 These methods iteratively restrict the search space to some random subspace of the parameters, constructing and minimizing a local model only within this subspace. Thus, our variants only require access to (small-dimensional) projections of first- and second-order problem derivatives and calculate a reduced step inexpensively. 

  Under suitable assumptions, the ensuing methods maintain the optimal first-order, and second-order, global rates of convergence of (full-dimensional) cubic regularization, while showing improved scalability both theoretically and numerically, particularly when applied to low-rank functions. 
When applied to the latter, our adaptive variant naturally adapts the subspace size to the true rank of the function, without knowing it a priori.
\end{abstract}

\section{Introduction}

Second-order optimization algorithms for the unconstrained optimization problem
\begin{equation*}
\min_{x\in \R^d} f(x),
\end{equation*}
where $f:\R^d\rightarrow \R$ is a sufficiently smooth, bounded-below function, 
use gradient and curvature information to determine iterates and thus, may have faster convergence than first-order algorithms that only rely on gradient information. However, for high-dimensional problems, when $d$ is large, the computational cost of these methods can be a barrier to their use in practice. We are concerned with the task of scaling up second-order optimization algorithms so that they are a practical option for high-dimensional problems.

An alternative to linesearch and trust-region techniques, Adaptive Regularization framework using Cubics (ARC) \cite{nesterovCubicRegularizationNewton2006, Cartis:2009fq} determines the change $s_{k}$ between the current iterate $x_k$ and $x_{k+1}$ by (approximately) solving a cubically-regularized local quadratic model:
    \begin{equation}
        \argmin_{s \in \R^{d}} m_{k}(s) = f(x_{k}) + \langle \nabla f(x_{k}),\ s\rangle +  \frac{1}{2}\langle s,\ \nabla^{2} f(x_{k})s\rangle + \frac{1}{3\alpha_{k}}||s||_{2}^{3}
    \label{eq:arc}
    \end{equation}
where $\nabla f$ and $\nabla^{2} f$ denote the gradient and Hessian of $f$ and $\sigma_{k}=1/{\alpha_k}>0$ is the regularization parameter\footnote{The scaling of the regularization factor $\frac{1}{3\alpha_{k}}$ may equivalently vary  as $\frac{\sigma_k}{3}$ or $\frac{\sigma_k}{3!}$.}. Assuming Lipschitz continuity of the Hessian on the iterates' path, ARC requires at most $\mathcal{O}\left(\epsilon^{-\frac{3}{2}}\right)$ iterations and function and (first- and second-) derivative evaluations to attain an $\epsilon$-approximate first-order critical point; this convergence rate is optimal over a large class of second-order methods \cite{cartis_evaluation_2022}.

\paragraph{Dimensionality reduction in the parameter space} Approximately minimizing the local model given by \eqref{eq:arc} to find a search step, is computationally expensive as it requires iteratively solving $\mathcal{O}(d)$-dimensional linear systems involving $\nabla^2 f(x_k)$ and $\nabla f(x_k)$ (using factorization-based approaches or iterative, Krylov-type ones \cite{cartis_evaluation_2022}).
Furthermore, the memory cost of (full-dimensional) problem information such as $\nabla^{2}f(x_{k})$ and $\nabla f(x_{k})$ may also be prohibitively expensive in some applications. 
These computational costs involved in the step calculation motivate us to consider dimensionality reduction techniques that access/store reduced problem information of smaller dimensions, and so operate in (random) subspaces of the variables rather than in the full space.

\paragraph{Random subspace variants of ARC} At every iteration of a random subspace method,   the search space is restricted to some smaller-dimensional subspace of the functional domain (here, $\R^d$), and then the search step is computed in this subspace. 
In particular, the random subspace at the $k$th iteration is the row span of a sketching matrix $S_k\in \R^{l\times d}$ \cite{10.1561/0400000060, 10.1561/2200000035} with entries drawn, for example, from a (scaled) Gaussian distribution (or other useful distributions); this is advantageous as such random matrices have length-preserving properties according to Johnson-Lindenstrauss (JL) Lemma \cite{Johnson:1984aa}, which helps essentially preserve the length of gradients through the subspace projection. 
Thus,  Random Subspace ARC (R-ARC) replaces  $s\in \R^d$ in \eqref{eq:arc} by $s= S_k^{\top}\hat{s}$, where $\hat{s}\in \R^l$, $l\ll d$, which leads to the reduced model \eqref{eqn::mKHatSpec_CB} which employs only the reduced gradient 
$S_k \nabla f(x_{k})$ and reduced Hessian 
$S_k\nabla^{2} f(x_{k})S_k^{\top}$, of a sketched objective 
$\hat{f}(y)=f(S^T_ky)$, $y\in \R^l$. As such, R-ARC only requires these projected, low(er)-dimensional problem information,
$S_k \nabla f(x_{k})$ and 
$S_k\nabla^{2} f(x_{k})S_k^{\top}$, rather than their full-dimensional counterparts, allowing application of these methods to problems where full information is not available/too expensive. The former can be calculated efficiently without evaluation of full gradients and Hessians, by requiring gradient inner products and Hessian-vector products with a small subset of (random) directions; for example, $S_k \nabla f(x_{k})$ requires only $l$ directional derivatives of $f$ corresponding to the rows of $S_k$.
These directional derivatives  can be calculated by a small number of finite differences or by using automatic differentiation, requiring only the tangent model application without complete calculation and storage of an adjoint. 

The main focus here is on presenting a random subspace algorithm and associated analysis that recovers, with high probability, the optimal global rate of convergence of cubic regularization, namely, $\mathcal{O}(\epsilon^{-3/2})$ to drive the gradient of the objective below $\epsilon$; and to provide also, a global rate of convergence to second-order critical points, which is novel. To achieve these strong global rate guarantees while also ensuring high scalability, our random subspace variant of cubic regularization, R-ARC, can take advantage/benefits when special problem structure is present, namely, low rank, as we explain below. Then, arbitrarily small subspace dimensions can be chosen, even adaptively (as our R-ARC-D variant proposes), while the optimal worst-case complexity bound of order $\mathcal{O}(\epsilon^{-3/2})$ is guaranteed. We note that under mild(er) assumptions and for general objectives (without special structure), a worst-case complexity bound of order $\mathcal{O}(\epsilon^{-2})$ is guaranteed for R-ARC by the results in \cite{cartis_randomised_2022}. The challenge here is to find sufficient conditions on the algorithm and the problem that ensure that the random subspace ARC variant satisfies the optimal complexity bound of full-dimensional ARC and then to assess which of these conditions also achieve scalable variants.  

\paragraph{Existing literature} As an example, when the sketching matrix $S_k$ is a (uniform) sampling matrix, random subspace methods are random block coordinate variants, which in their first-order instantiations 
and only require a subset of partial derivatives to calculate an approximate gradient direction \cite{Nesterov12, richtarik2014iteration}. Despite documented failures of (deterministic) coordinate techniques on some problems \cite{Powell1973}, the challenges of large-scale applications have brought these methods to the forefront of research in the last decade; see 
\cite{Wright2015} for a survey of this active area, particularly for convex optimization.
Developments of these methods for nonconvex optimization can be found, for example, in \cite{Raza2013, Patrascu2015, lu2017randomized} and more recently, \cite{Yang2020}; with extensions to constrained problems \cite{Birgin1, Birgin2}, and distributed strategies \cite{Facchinei, facchinei2015parallel}.
Subspace methods can be seen as an extension of (block) coordinate methods by allowing the reduced variables
to vary in (possibly randomly chosen) subspaces that are not necessarily aligned with
coordinate directions. (Deterministic) subspace and decomposition methods have been of steadfast interest in the optimization community for decades. In particular, Krylov subspace methods can be applied to calculate -- at typically lower computational cost and in a matrix free way -- an approximate Newton-type search direction over increasing and nested subspaces; see \cite{Nocedal:2006uv}  for Newton-CG techniques (and further references), \cite{GLTR, conn2000trust} for initial trust-region variants and \cite{cartis_evaluation_2022} for cubic regularization variants.  However, these methods still require that the full gradient vector is calculated/available at each iteration, as well as (full) Hessian matrix actions. The requirements of  large-scale problems, where full problem information may not be available, prompt us to investigate the case when only inexact gradient and Hessian action information is available, such as their projections to lower dimensional subspaces. Thus, in these frameworks (which subsume block-coordinate methods), both the problem information and the search direction are inexact and low(er) dimensional. Deterministic proposals can be found in \cite{Yuan}, which also traces a history of these approaches. Our aim here is to investigate {\it random} subspace second-order variants, that exploit random matrix theory properties in constructing the subspaces and analyzing the methods.

Both first-order and second-order methods that use sketching to generate the random subspace have been proposed, particularly for convex and convex-like problems, that only calculate a (random) lower-dimensional projection of the gradient or Hessian. Sketched gradient descent methods have been proposed in 
\cite{Kozak_published, kozak2019stochastic, Gower2020, Grishchenko2021}. 
 The sketched Newton algorithm \cite{pilanci2017newton} requires a sketching matrix 
 that is proportional to the rank of the (positive definite) Hessian, 
 while the sketched online Newton \cite{luo2016efficient}  
 uses streaming sketches to scale up a second-order method, 
 comparable to Gauss–Newton, for solving online learning problems.
Improvements, using sketching, to the solution of large-scale least squares problems are given in  \cite{lacotte2019faster, ergen2019, lacotte2020optimal, kahale2020leastsquares}.
The randomized subspace Newton \cite{gower2019rsn} efficiently sketches 
the full Newton direction for a family of generalized linear models, 
such as logistic regression.
The stochastic dual Newton ascent algorithm in  \cite{qu2016sdna} 
requires a positive definite upper bound $M$ on the Hessian and 
proceeds by selecting random principal submatrices of $M$ 
that are then used to form and solve an approximate Newton system. 
The randomized block cubic Newton method in \cite{doikov2018randomized} 
combines the ideas of randomized coordinate descent with 
cubic regularization and requires the optimization problem to be block separable. In \cite{Gower2022}, a sketched Newton method is proposed for the solution of (square or rectangular) nonlinear systems of equations, with a global convergence guarantee.  The local rate of convergence of sketched Newton's method  has been comprehensively studied in \cite{poirion}; this work carefully captures the cases when this rate can be at least superlinear (when the sketching matrix captures the Hessian's range), and when this is impossible (if the sketch is `too small' dimensional and thus misses important spectral information about the Hessian).

A general random subspace algorithmic framework and associated global rate of convergence for nonconvex optimization is given in \cite{cartis_randomised_2022}\footnote{In revision for journal publication.}, which includes material (Chapter 4) from \cite{Zhen-PhD}, which in turn builds on the succinct material in \cite{zhen:icml_BCGN}. Under  weaker assumptions than for standard probabilistic models \cite{MR3245880, Gratton:2017kz, Cartis:2017fa}, and for more general methods and objectives than in \cite{Kozak_published, kozak2019stochastic},we show that, provided the problem gradient is Lipschitz continuous, this random subspace framework based on sketching, with trust region or quadratic regularization strategies (or linesearch etc),
    has a global worst-case complexity of order $\mathO{\epsilon^{-2}}$ to drive the gradient $\nabla f(x_k)$ of $f$ below the desired accuracy $\epsilon$, with exponentially high probability; this complexity bound matches in  the order of the accuracy that of corresponding deterministic/full dimensional variants of these same methods. The choice of the random subspace $S_k$ (at iteration $k$) only needs to project approximately correctly the length of the full gradient vector, namely, \eqref{JL:eq} holds with $S=S_k$ and $Y:=\{\nabla f(x_k)\}$. This one-dimensional subspace embedding is a mild requirement that can be achieved by several sketching matrices such as scaled Gaussian matrices (see Lemma \ref{lem:JL}), and some sparse embeddings. In these cases, the same embedding properties provide that the dimension of the projected subspace (see $l$ value in Lemma \ref{lem:JL}) is {\it independent} of the ambient dimension $d$ and so the algorithm can operate in a small dimensional subspace at each iteration, making the projected step and gradient much less expensive to compute\footnote{We also address the case of  sampling sketching matrices, when our approach reduces to  randomized block-coordinates, which have weaker embedding properties and so require additional problem assumptions to yield a subspace variant that is almost surely convergent.}. We note that the approach in \cite{cartis_randomised_2022} applies to second-order methods as all (subspace) local models allow (approximate or exact) subspace second-order terms. Thus, random subspace cubic-regularization variants (ARC) are contained in the above framework; however, only a suboptimal complexity bound is obtained for these methods and second-order smoothness of the objectives is not exploited. Hence, in the present work, we address these questions (as well as second-order criticality).

We note that the first probabilistic, inexact cubic regularization variants with optimal global rate of convergence of order $\mathO{\epsilon^{-3/2}}$ can be found in \cite{Cartis:2017fa}. There, probabilistic second-order local models with cubic powers of length of the step are employed, which can be calculated by sampling of function values, but which are different/stronger conditions compared to the subspace sketching conditions here (see Definition \ref{def:true:CBGN}). We have not found a way to directly use the probabilistic model conditions to generate random subspace methods. 

Sketching can also be applied not only to reduce the dimension of the parameter/variable domain, but also the data/observations when minimizing an objective given as a  sum of (many smooth) functions, as it is common when training machine learning systems \cite{Curtis} or in data fitting/regression problems. Then, using sketching inside cubic regularization methods, as mentioned above regarding probabilistic models, we subsample some of these constituent functions and calculate a local, cubically-regularized improvement for this reduced objective. This leads to stochastic cubic regularization  methods and related variants\footnote{Note that in the case of random subspace methods, the objective is evaluated accurately as opposed to observational sketching, where this accuracy is lost through subsampling.} \cite{kohler_sub-sampled_2017, xu_second-order_2018, xu_newton-type_2019, yao_inexact_2018, sadok, bellavia2024}, but not directly relevant to the developments in this paper as here, we only investigate reducing the dimension $d$ of the variable domain.

\paragraph{Random embeddings}

We introduce some basic definitions and results that give conditions under which a sketching matrix approximately preserves lengths. This length preservation is essential in ensuring sufficient function decrease so that R-ARC can attain the same rates of convergence as the ARC algorithm.

\begin{definition}[$\epsilon$-subspace embedding \cite{10.1561/0400000060}]
    An $\epsilon$-subspace embedding for a matrix $B \in \R^{d \times k}$ is a matrix $S \in \R^{l \times d}$ such that
    \begin{equation}
        (1-\epsilon)\normTwo{y}^{2} \leq \normTwo{Sy}^{2} \leq (1+\epsilon)\normTwo{y}^{2} \texteq{for all $y \in Y = \{y :y = Bz, z\in \R^{k}\}$}.
    \end{equation}
    \label{def:sketch}
\end{definition}
\begin{definition}[Oblivious embedding \cite{10.1561/0400000060, 10.1109/FOCS.2006.37}]
    A distribution $\mathcal{S}$ on $S \in \R^{l \times d}$ is an $(\epsilon, \delta)$-oblivious embedding if for a given fixed/arbitrary set of vectors, we have that, with a high probability of at least $1-\delta$, a matrix S from the distribution is an $\varepsilon$-embedding for these vectors. 
\end{definition}
Next we define scaled Gaussian matrices, which we then show to have embedding properties under certain conditions.

\begin{definition}\label{def:Gaussian}
    A matrix $S \in \R^{l \times d}$ is a scaled Gaussian matrix if $S_{i, j}$ are independently distributed as $N(0, l^{-1})$.
\end{definition}

Scaled Gaussian matrices have  oblivious embedding properties provided they are of a certain size, according to the celebrated Johnson-Lindenstrauss (JL) Lemma.

\begin{lemma}[JL Lemma \cite{Johnson:1984aa, MR1943859}] 
    \label{lem:JL}
    Given a fixed, finite set $Y \subset \mathbb{R}^{n}, \epsilon, \delta > 0$, let $S \in \mathbb{R}^{l \times d}$ be a scaled Gaussian matrix, with $l = \mathcal{O}(\epsilon^{-2}\log(\frac{|Y|}{\delta}))$ and where $|Y|$ refers to the cardinality of $Y$. Then we have, with probability at least $1 - \delta$, that
    \begin{equation}
        (1-\epsilon)||y||_{2}^{2} \leq ||Sy||_{2}^{2} \leq (1+\epsilon)||y||_{2}^{2} \quad\text{for all $y \in Y$.}
        \label{JL:eq}
    \end{equation}
\end{lemma}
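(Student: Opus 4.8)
The plan is to reduce the statement to a one-dimensional concentration bound and then apply a union bound over the finitely many vectors in $Y$. First I would fix a single $y \in Y$ and analyze the random variable $\normTwo{Sy}^2$. Writing $s_1^\top, \dots, s_l^\top$ for the rows of $S$, each entry $S_{i,j} \sim N(0, l^{-1})$ independently, so the $i$th coordinate of $Sy$ equals $s_i^\top y$, a linear combination of independent Gaussians and hence itself Gaussian with mean zero and variance $\normTwo{y}^2 / l$. Therefore $\normTwo{Sy}^2 = \sum_{i=1}^l (s_i^\top y)^2 = \frac{\normTwo{y}^2}{l} \sum_{i=1}^l g_i^2$, where $g_1, \dots, g_l$ are i.i.d.\ standard normals. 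In particular $\expectation{\normTwo{Sy}^2} = \normTwo{y}^2$, so the embedding is unbiased, and the fluctuation is governed by the chi-squared variable $Z = \sum_{i=1}^l g_i^2$ with $l$ degrees of freedom.

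The heart of the argument is then to show that $\frac{1}{l} Z$ concentrates around its mean $1$ with exponentially small tails. I would obtain this via the Chernoff method: the moment generating function $\expectation{e^{t g_i^2}} = (1-2t)^{-1/2}$ for $t < 1/2$, together with independence, gives $\expectation{e^{tZ}} = (1-2t)^{-l/2}$. Applying Markov's inequality to $e^{tZ}$ and to $e^{-tZ}$ and then optimizing over $t$ yields two-sided bounds of the form $\probability{\abs{\tfrac{1}{l}Z - 1} > \epsilon} \leq 2 e^{-l(\epsilon^2 - \epsilon^3)/4}$. Translating back through the identity above, the event that a fixed $y$ violates \eqref{JL:eq} has probability at most $2 e^{-c l \epsilon^2}$ for an absolute constant $c > 0$ in the regime $\epsilon \in (0,1)$.

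Finally I would take a union bound over the $|Y|$ vectors: the probability that \eqref{JL:eq} fails for at least one $y \in Y$ is at most $2|Y| e^{-c l \epsilon^2}$. Requiring this to be at most $\delta$ rearranges to $l \geq c^{-1} \epsilon^{-2} \log\bracket{2|Y|/\delta}$, which is exactly the claimed order $l = \mathO{\epsilon^{-2} \log(|Y|/\delta)}$; for any $l$ at least this large, \eqref{JL:eq} holds simultaneously for all $y \in Y$ with probability at least $1 - \delta$. The main obstacle is the chi-squared tail bound in the second step: extracting the clean exponent (rather than a looser constant) requires carefully carrying out the Chernoff optimization and bounding $\log(1+u)$ by its Taylor expansion with the cubic correction term, so that both tails collapse to the $\epsilon^2 - \epsilon^3$ form. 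The first and third steps are routine once this concentration estimate is in hand.
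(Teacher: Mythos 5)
Your argument is correct. Note that the paper does not actually prove this lemma: it is imported as a known result with citations to Johnson--Lindenstrauss and to the elementary proof of Dasgupta and Gupta, so there is no in-paper proof to compare against. Your route --- reducing $\normTwo{Sy}^2$ for a fixed $y$ to a scaled chi-squared variable $\frac{\normTwo{y}^2}{l}\sum_{i=1}^l g_i^2$, applying the Chernoff/moment-generating-function bound $(1-2t)^{-l/2}$ to obtain the two-sided tail $2e^{-l(\epsilon^2-\epsilon^3)/4}$, and finishing with a union bound over the $|Y|$ vectors --- is precisely the standard elementary proof in the cited reference, and the bookkeeping (unbiasedness of the embedding, the rearrangement $l \geq c^{-1}\epsilon^{-2}\log(2|Y|/\delta)$ giving the claimed $\mathcal{O}(\epsilon^{-2}\log(|Y|/\delta))$ scaling) is all in order.
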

We note that the distribution of the scaled Gaussian matrices with $l=\mathcal{O}(r\epsilon^{-2}|\log \delta|)$, where $r\leq \min (d,k)$ is the rank of (any given) $B\in \mathbb{R}^{d\times k}$, are  an 
$(\epsilon, \delta)$-oblivious embedding for any vector in the range of $B$ \cite{10.1561/0400000060}. 

Whilst we primarily consider Gaussian matrices in the theoretical sections of this paper, we also give definitions for several other choices of sketching matrices that we refer to or use in the numerical results, in the Appendix. 
Embedding properties hold for other families of matrices other that Gaussian matrices, such as for $s$-hashing matrices, although with a different dependency on $l$; such ensembles are defined in the Appendix as well as other that we use in this paper and a summary of their embedding properties can be found in \cite{Zhen-PhD, 2021arXiv210511815C}.  

\paragraph{Low-rank functions}

We now describe a class of functions that benefit from random subspace algorithms. We  assume that $f$  has \emph{low effective dimension}, namely, $f$ varies only within a (low-dimensional and unknown) linear subspace and is constant along its complement. These functions, also referred to as \emph{multi-ridge} \cite{Tyagi2014}, functions with \emph{active subspaces} \cite{constantine2015book}, or \emph{low-rank} functions \cite{cosson_gradient_2022, Parkinson2023}, arise when tuning (over)parametrized models and processes, such as in hyper-parameter optimization for neural networks \cite{Bergstra2012}, heuristic algorithms for combinatorial optimization problems \cite{Hutter2014}, complex engineering and physical simulation problems  including climate modelling and policy search and control; see \cite{constantine2015book} for more details. 
Recently, they have also been observed in the training landscape of neural networks  and the trained nets as a function of the (training) data \cite{Parkinson2023}. See \cite{cartis_learning_2024} and the references therein for more details. 

The most general definition of low-rank functions is as follows.
\begin{definition}\label{def:low:rank}
    \cite{wang_bayesian_2016} A function $f:\R^{d} \rightarrow \R$ is said to be of rank $r$, with $r \leq d$ if
    \begin{itemize}
        \item there exists a linear subspace $\mathcal{T}$ of dimension $r$ such that for all $x_{\top} \in \mathcal{T} \subset \R^{d}$ and $x_{\perp} \in \mathcal{T}^{\perp} \subset \R^{d}$, we have $f(x_{\top} + x_{\perp}) = f(x_{\top})$, where $\mathcal{T}^{\perp}$ is the orthogonal complement of $\mathcal{T}$;
        \item $r\ll d$ is the smallest integer with this property
    \end{itemize} 
\end{definition}
 We see that $f$ only varies in $\mathcal{T}$, its effective subspace, while being constant in $\mathcal{T}^{\perp}$.
We revisit low-rank functions in \autoref{sec:low_rank} where we derive desirable properties that shows the Hessian of these functions is globally low rank, and prove a convergence result for R-ARC applied to low-rank functions.
It is intuitive to see that if all `interesting' optimization aspects happen in $\mathcal{T}$, sketching with a dimension proportional to $r$  rather than $d$ will capture the key problem information while also ensuring substantial dimensionality reduction.

Random subspace methods have also been studied for the global optimization of nonconvex functions, especially in the presence of low effective dimensionality of the objective; see \cite{adilet1, adilet2, adilet3} and references therein. This special structure assumption has also been investigated in the context of first-order methods for local optimization \cite{subramanian, cosson_gradient_2022}.

\paragraph{Summary of contributions} We give a brief summary of our already-described contributions\footnote{A brief description, without proofs, of a subset of the results of this paper has appeared as part of a six-page conference proceedings paper (without any supplementary materials) in the Neurips Workshop “Order Up ! The Benefits of Higher-Order Optimization in Machine Learning” (2022), see \cite{zhen:neurips_RARC}. We also note that a substantial part of this paper has been included as Chapter 5 of the doctoral thesis \cite{Zhen-PhD} but has not been submitted or published elsewhere. The ideas (without proofs) of the R-ARC-D variant for low-rank functions has appeared as part of a six-page conference proceedings paper (without any supplementary materials) in the Neurips Workshop Optimization for Machine Learning (2024), see \cite{ed:neurips_RARC-D}. }: 
\begin{itemize}
\item We propose R-ARC, a random subspace variant of ARC that generates iterates in subspaces and does not require full derivative information. We show that R-ARC benefits from the optimal rate of convergence, $\mathcal{O}(\epsilon^{-3/2})$, to drive $\|\nabla f(x^k)\|\leq \epsilon$, with high probability, matching the full-dimensional ARC's rate; at the time of completion \cite{Zhen-PhD}, this was the first such analysis using sketching. The assumptions required on the sketching matrices that generate the subspaces, to ensure this rate, include correctly sketching the gradient and Hessian at the current iterate, with some probability. Thus, to ensure dimensionality reduction and benefits of subspace approaches, the Hessian needs to have low rank which leads us to the useful, existing notion of low-rank functions that possess this property.  
\item We propose a novel adaptive variant of R-ARC, R-ARC-D, for low-rank functions that does not require the rank of the function a priori, while benefiting from dimensionality reduction; the size of the sketching subspace is based on the rank of the projected Hessian matrix. The optimal first-order rate is shown to hold here too.
\item We show that R-ARC converges sublinearly to an approximate second-order critical point in the subspace, in the sense that the subspace Hessian at the limit point is nearly positive semidefinite, and bound the convergence rate. Furthermore, R-ARC with Gaussian sketching converges to an approximate second-order critical point in the full space, with the same rate, matching again the corresponding bound for ARC\footnote{Again, this analysis was unprecedented at the time of its completion/online posting \cite{Zhen-PhD}.}.
\item We present numerical results that compare the full-dimensional ARC with R-ARC and the adaptive variant R-ARC-D, on full dimensional CUTEst problems and constructed low-dimensional variants of these problems, with encouraging results.  
\end{itemize}

There have already been follow ups and applications of our work presented here, such as \cite{bellavia2023} for an objective-function-free variant, while \cite{lindon2024} uses the second-order criticality results in novel derivative-free model based developments.

\paragraph{Structure of the paper} The structure of the paper is as follows. Section 2 summarizes the generic framework and its associated analysis introduced in \cite{Zhen-PhD, cartis_randomised_2022}, which we then crucially use in Section 4, to analyze the random subspace cubic regularization algorithms (R-ARC) which we introduce in Section 3. Section 4 proves the optimal global rate of convergence of R-ARC for generating an approximate first-order critical point with high probability, and then shows that this result applies to low-rank functions in which case, R-ARC can provably achieve dimensionality reduction.
In R-ARC, the dimension of the random subspace is fixed throughout the algorithm; Section 5 proposes an adaptive subspace selection strategy for R-ARC, R-ARC-D, that is particularly scalable for low-rank functions, and shows it satisfies the optimal first-order complexity bound. Section 6 proves a global rate for R-ARC for achieving approximate second-order critical points, while Section 7 presents numerical results comparing full-dimensional ARC with R-ARC and R-ARC-D on full rank CUTEst problems and their low-rank modifications.

\section{A Generic Algorithmic Framework and its Analysis (Summary)}

This section summarizes a general algorithm and associated analysis that originates in \cite{Zhen-PhD, cartis_randomised_2022} and which, in terms of algorithm construction, is similar to \cite{Cartis:2017fa}\footnote{A detailed comparison to \cite{Cartis:2017fa} is given in Section 2 in \cite{cartis_randomised_2022}.}. We will then fit our random subspace cubic regularization variants into this algorithmic framework and its analysis, which though more abstract, substantially shortens the proofs of our results.

The scheme (Algorithm \ref{alg:generic}) relies on building a local, reduced model of the objective
function at each iteration, 
minimizing this model or reducing it in a sufficient manner and
considering the step which is dependent on a step size parameter and
which provides the model reduction (the step size parameter may be
present in the model or independent of it). 
This step determines a new candidate point. The function value is then
computed (accurately) at the new candidate point. 
If the function reduction provided by the candidate point is deemed
sufficient, then the iteration is declared successful, the 
candidate point becomes the new iterate and the step size parameter is
increased. Otherwise, the iteration is 
unsuccessful, the iterate is not updated and the step size parameter is reduced\footnote{Throughout the paper, we let $\N^+=\N\setminus \{0\}$ denote the set of positive natural numbers.}. 

\begin{algorithm}[H]
\begin{description}
\item[Initialization] \ \\
Choose a class of  (possibly random) models $m_k\left(w_k(\sHat)\right) = \mKHat{\sHat}$, where $\sHat \in \R^l$ with $l\leq d$ is the step parameter and $w_k: \R^l \to \R^d$ is the prolongation function. 
Choose constants $\gamma_1\in (0,1)$, $\gamma_2 = \gammaOne^{-c}$, for some $c \in \N^+$,  $\theta \in (0,1)$ and $\alpha_{\max}>0$.
Initialize the algorithm by setting $x_0 \in \R^d$, $\alpha_0 = \alphaMax \gamma_1^p$ for some $p \in \N^+$ and $k=0$.

 \item[1. Compute a reduced model and a step] \ \\
Compute a local (possibly random) reduced model $\mKHat{\sHat}$ of $f$ around $x_k$ with $\mKHat{0} = f(x_k)$. \\
Compute a step parameter $\sHat_k(\alpha_k)$, where the parameter $\alpha_k$ is present in the reduced model or the step parameter computation.\\
Compute a potential step $s_k = w_k(\sHat_k)$.

\item[2. Check sufficient decrease]\ \\  
Compute $f(x_k + s_k)$ and check if sufficient decrease (parameterized by $\theta$) is achieved in $f$ with respect to (some function related to) the model decrease $\mKHat{0} - \mKHat{\hat{\sK}(\alpha_k)}$.

\item[3. Update the parameter $\alphaK$ and possibly take the trial step $\sK$]\ \\
If sufficient decrease is achieved, set $\xKPlusOne = \xK + \sK$ and $\alphaKPlusOne = \min \set{\alphaMax, \gammaTwo\alphaK}$ [successful iteration].
Otherwise set $\xKPlusOne = \xK$ and $\alphaKPlusOne = \gammaOne \alphaK$ [unsuccessful iteration].\\
Increase the iteration count by setting $k=k+1$ in both cases. 

\caption{\bf{Generic optimization framework based on  randomly generated reduced models.}} \label{alg:generic} 

\end{description}
\end{algorithm}

Our framework (\autoref{alg:generic}) explicitly states that the model does not need to have the same dimension as the objective function; with the two being connected by a step transformation function $w_k: \R^l \to \R^d$ which typically here will have $l<d$. As an example, note that letting $l=d$ and $w_k$ be the identity function in \autoref{alg:generic} leads to usual, full-dimensional local models, 
which coupled with typical strategies of linesearch and trust-region as parametrized by $\alpha_k$ or regularization  (given by $1/\alpha_k$),
recover classical, deterministic variants of corresponding methods. When $l<d$, we choose $S_k$ be a random embedding and let $s_k = w_k(\sHat_k):=S_k^T\sHat_k$, obtaining random subspace variants of classical methods;
see \cite{cartis_randomised_2022} for more details regarding random subspace trust-region and quadratic regularization variants.

\subsection{A probabilistic convergence result}

Since the local model is (possibly) random, $\xK, \sK, \alphaK$ are in general random variables; we will use $\barXK, \bar{s}_k, \bar{\alpha}_k$ to denote their realizations.  Given  (any) desired accuracy $\epsilon >0$, we define convergence in terms of the random variable
\begin{equation}
    \nEps:= \min \set{k: {\rm criticality \,measure}\leq \epsilon},
   \label{eqn::nEps}
\end{equation}
which represents the first time that some desired criticality measure, which can be first-order ($\normTwo{\gradFK}\leq \epsilon$) or second-order
($-\lambda_{\min}(\hessFK)\leq \epsilon$)
 descends below $\epsilon$. Also note that $k< \nEps$ implies ${\rm criticality\, measure} > \epsilon$, which will be used repeatedly in our proofs. 

Let us suppose that there is a subset of iterations, which we refer to as \textbf{true iterations} such that \autoref{alg:generic} satisfies the following assumptions. The first assumption states that given the current iterate, an iteration $k$ is true at least with a fixed probability, and is independent of the truth value of all past iterations.

\begin{assumption}\label{AA2}
There exists $\delta_S \in (0,1)$ such that for any $\barXK \in \R^d$ and $k=1, 2, \dots$
\begin{equation}
    \probabilityGivenXK{T_k} \geq 1-\delta_S, \notag
\end{equation}
where $T_k$ is defined as
        \begin{equation}
            T_k = 
        \twoCases{1}{\text{if iteration $k$ is true}}{0}{\text{otherwise}}
        \label{eqn::t_k}
        \end{equation}

Moreover, $\probability{T_0} \geq 1-\delta_S$; and $T_k$ is conditionally independent of $T_0, T_1, \dots, T_{k-1}$ given $x_k = \barXK$. 
\end{assumption}

The next assumption says that for $\alphaK$ small enough, any true iteration before convergence is guaranteed to be successful.

\begin{assumption}\label{AA3}
For any $\epsilon >0$, there exists an iteration-independent constant $\alphaLow>0$ (that may depend on $\epsilon$ as well as problem and algorithm parameters) such that 
if iteration $k$ is true, $k< \nEps$, and $\alpha_k \leq \alphaLow$, then iteration $k$ is successful. 

\end{assumption}

The next assumption says that before convergence, true and successful iterations result in an objective decrease bounded below by an (iteration-independent) function $h$, which is monotonically increasing in its two arguments, $\epsilon$ and $\alphaK$. 

\begin{assumption}\label{AA4}
There exists a non-negative, non-decreasing function $h(z_1,z_2)$ such that,
for any $\epsilon>0$, 
if iteration $k$ is true and successful with $k < \nEps$, then 

	\begin{equation}
	f(\xK) - f(\xK + \sK) \geq h(\epsilon, \alpha_k), \label{eqn::generic_model_decrease}
	\end{equation}
where $s_k$ is computed in Step 1 of \autoref{alg:generic}. Moreover, $h(z_1, z_2)>0$ if both $z_1>0$ and $z_2>0$.
\end{assumption}

The final assumption requires that the function values are monotonically decreasing throughout the algorithm. 

\begin{assumption}\label{AA5}
For any $k \in \N$, we have 
\begin{equation}
    f(\xK) \geq f(\xKPlusOne). \label{eqn::fKNonIncreasing}
\end{equation}
\end{assumption}

\begin{lemma}
\label{lem::alphaMin}
Let $\epsilon>0$ and \autoref{AA3} hold with $\alphaLow>0$. Then there exists $\newL \in \N^+$, and $\alphaMin >0$ such that 
\begin{align}
&\newL = \ceil{\logBaseGammaOne{ \minMe{\frac{\alphaLow}{\alphaZero}}{\frac{1}{\gammaTwo}}}}, \label{eqn:tauLDef}\\
&\alphaMin = \alphaZero \gammaOne^\newL, \label{eqn::alphaMin}\\
    &\alphaMin \leq \alphaLow, \notag \\
    &\alphaMin \leq \frac{\alphaZero}{\gammaTwo}, \label{eqn::alphaMinUpperByGammaTwoOverAlphaZero}
\end{align}
where $\gammaOne, \gammaTwo, \alphaZero$ are defined in \autoref{alg:generic}. 
\end{lemma}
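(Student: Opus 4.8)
The statement is purely constructive: it fixes the definitions of $\newL$ and $\alphaMin$ and asserts three properties, so the plan is simply to adopt the stated formulas and verify each claim in turn. The structural facts I would lean on, all coming from the Initialization step of \autoref{alg:generic}, are that $\gammaOne \in (0,1)$, that $\gammaTwo = \gammaOne^{-c}$ with $c \in \N^+$ so that $\gammaTwo > 1$ and $1/\gammaTwo = \gammaOne^{c} \in (0,1)$, and that $\alphaZero > 0$. Assumption \autoref{AA3} enters only to supply the constant $\alphaLow > 0$ on which the whole construction rests.

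First I would check that $\newL \in \N^+$. Writing $M = \minMe{\frac{\alphaLow}{\alphaZero}}{\frac{1}{\gammaTwo}}$, both arguments of the minimum are strictly positive, while the second equals $\gammaOne^{c} < 1$; hence $M \in (0,1)$. Since the base $\gammaOne$ lies in $(0,1)$, the map $x \mapsto \logBaseGammaOne{x}$ has negative slope and carries $(0,1)$ into the positive reals, so $\logBaseGammaOne{M} > 0$. Taking the ceiling of a strictly positive number produces an integer that is at least $1$, which places $\newL$ in $\N^+$; and $\alphaMin = \alphaZero \gammaOne^{\newL} > 0$ follows at once from $\alphaZero > 0$.

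Next I would derive the two inequalities from a single estimate on $\gammaOne^{\newL}$. The bound $\ceil{x} \geq x$ gives $\newL \geq \logBaseGammaOne{M}$; because $\gammaOne < 1$, the map $t \mapsto \gammaOne^{t}$ is strictly decreasing, so this inequality reverses under exponentiation to yield $\gammaOne^{\newL} \leq \gammaOne^{\logBaseGammaOne{M}} = M$. As $M$ is the minimum of $\frac{\alphaLow}{\alphaZero}$ and $\frac{1}{\gammaTwo}$, we obtain both $\gammaOne^{\newL} \leq \frac{\alphaLow}{\alphaZero}$ and $\gammaOne^{\newL} \leq \frac{1}{\gammaTwo}$; multiplying each through by $\alphaZero > 0$ delivers $\alphaMin \leq \alphaLow$ and $\alphaMin \leq \frac{\alphaZero}{\gammaTwo}$ respectively.

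There is no hard estimate anywhere; the only point demanding care, which I would flag explicitly, is that the base $\gammaOne$ is less than one, so the usual monotonicity of both the logarithm and the exponential is reversed. This single reversal does double duty: it is what makes $\logBaseGammaOne{M}$ positive (so the ceiling lands in $\N^+$), and it is what converts the ceiling lower bound on $\newL$ into the desired upper bounds on $\gammaOne^{\newL}$, and hence on $\alphaMin$.
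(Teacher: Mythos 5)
Your proof is correct and follows essentially the same route as the paper's: both verify $\alphaMin\leq\alphaLow$ and $\alphaMin\leq\alphaZero/\gammaTwo$ by exponentiating the ceiling bound $\newL\geq\logBaseGammaOne{M}$ with the monotonicity reversal for base $\gammaOne\in(0,1)$, and both deduce $\newL\in\N^+$ from the fact that $1/\gammaTwo=\gammaOne^c<1$. The only cosmetic difference is that the paper concludes $\newL\geq c$ (hence $\newL\in\N^+$ since $c\in\N^+$), whereas you argue directly that $\logBaseGammaOne{M}>0$ forces the ceiling to be at least $1$; the two are equivalent here.
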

\begin{proof}

We have that $\alphaMin \leq \alphaZero \gammaOne^{\logBaseGammaOne{ \frac{\alphaLow}{\alphaZero}}} = \alphaLow$. Therefore by \autoref{AA3}, if iteration $k$ is true, $k< \nEps$, and $\alpha_k \leq \alphaMin$ then iteration $k$ is successful. Moreover, $\alphaMin \leq \alphaZero \gammaOne^{\logBaseGammaOne{\frac{1}{\gammaTwo}}} = \frac{\alphaZero}{\gammaTwo} = \alphaZero \gammaOneC$. It follows from $\alphaMin = \alphaZero \gammaOne^\newL$ that $\newL \geq c$. Since $c \in \N^+$, we have $\newL \in \N^+$ as well.

\end{proof}

\autoref{thm2} is our main result concerning the convergence of  \autoref{alg:generic}. It states a probabilistic bound on the total number of iterations $\nEps$ required by the generic framework to converge to within  $\epsilon$-accuracy of first order optimality.

\begin{theorem}
\label{thm2}
Let \autoref{AA2}, \autoref{AA3}, \autoref{AA4} and \autoref{AA5} hold with $\epsilon>0$, $\delta_S \in (0,1)$, $\alphaLow>0$, $h: \R^2 \to \R$ and $\alphaMin = \alphaZero \gammaOne^\newL$ associated with $\alphaLow$, for some $\newL \in \N^+$; assume also that 
\begin{equation}
    \deltaS < \frac{c}{(c+1)^2}, \label{eqn::deltaSConditionThmTwo}
\end{equation}
where $c$ is chosen at the start of \autoref{alg:generic}. Suppose that \autoref{alg:generic} runs for $N$ iterations\footnote{For the sake of clarity, we stress that $N$ is a deterministic constant, namely, the total number of iterations that we run \autoref{alg:generic}. $\nEps$, the number of iterations needed before convergence, is a random variable.}.

Then, for any $\delta_1 \in (0,1)$ such that 
\begin{equation}
    \gDeltaSDeltaOne >0, \label{eqn:tmp32}
\end{equation}
where 
\begin{equation}
    g(\deltaS, \deltaOne) = \nPreFactorTR, \label{eqn:gDeltaSDeltaOneDef}
\end{equation}
if $N$ satisfies 
\begin{equation}
    N \geq \gDeltaSDeltaOne \squareBracket{
         \fZeroMinusfStarOverH
         + \frac{\newL}{1+c}}, \label{eqn::n_upper_2}
\end{equation}
we have that
\begin{equation}
    \probability{N \geq \nEps} \geq 1 - \chernoffLowerExponential. \label{eqn:tmp33}
\end{equation}
\end{theorem}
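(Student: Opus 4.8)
The plan is to decouple the randomness from the arithmetic: I would prove, on a single high-probability event controlling the number of \emph{true} iterations, the deterministic implication ``if $N$ is as large as \eqref{eqn::n_upper_2} demands, then $N\geq\nEps$''. Throughout I work on the first $N$ iterations $k=0,\dots,N-1$, write $N_T=\sum_{k<N}T_k$ for the number of true iterations, $\nts$ for the true \emph{and} successful ones, and $\ntsAlphaZeroGammaOneCL$ for those among the latter that additionally satisfy $\alpha_k\geq\alphaZero\gammaOne^{c+\newL}=\gammaOne^{c}\alphaMin$.

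First I would isolate the probability. By \autoref{AA2} the indicators $T_k$ satisfy $\P[T_k=1\mid x_k]\geq 1-\deltaS$ and are conditionally independent of the past, so $\sum_{k<N}T_k$ stochastically dominates a $\mathrm{Binomial}(N,1-\deltaS)$ variable. A multiplicative Chernoff lower-tail bound then yields, for the chosen $\delta_1\in(0,1)$,
\begin{equation*}
    \P\!\left[N_T \geq (1-\delta_1)(1-\deltaS)N\right] \geq 1-\chernoffLowerExponential,
\end{equation*}
which matches exactly the probability in \eqref{eqn:tmp33}. It therefore suffices to show that on the event $E=\{N_T\geq(1-\delta_1)(1-\deltaS)N\}$, the hypothesis \eqref{eqn::n_upper_2} forces $N\geq\nEps$.

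For the deterministic core I would argue by contradiction, supposing $N<\nEps$, so that every iteration $k<N$ obeys $k<\nEps$ and all the structural assumptions are in force. Two estimates are then combined. The \emph{function-decrease} estimate uses \autoref{AA4}: each true and successful iteration with $\alpha_k\geq\gammaOne^{c}\alphaMin$ decreases $f$ by at least $h(\epsilon,\gammaOne^{c}\alphaMin)$, while by \autoref{AA5} no iteration increases $f$; telescoping and bounding below by $f^*$ gives $\ntsAlphaZeroGammaOneCL \leq \fZeroMinusfStarOverH$. The \emph{$\alpha$-counting} estimate uses that successful / unsuccessful steps rescale $\alpha_k$ by $\gammaTwo=\gammaOne^{-c}$ and $\gammaOne$ respectively (capped at $\alphaMax$, with $\alphaZero=\alphaMax\gammaOne^{p}$), together with \autoref{AA3} (any true iteration before convergence with $\alpha_k\leq\alphaMin$ is successful, so every true \emph{unsuccessful} step has $\alpha_k>\alphaMin$): tracking the walk $\log_{\gammaOne}\alpha_k$ and charging the few true iterations not already counted, I would obtain $N_T \leq \big[1-\tfrac{c}{(c+1)^2}\big]N + \ntsAlphaZeroGammaOneCL + \tfrac{\newL}{1+c}$.

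Feeding the two estimates into the event $E$ sandwiches $N$:
\begin{equation*}
    (1-\delta_1)(1-\deltaS)N \;\leq\; \Big[1-\tfrac{c}{(c+1)^2}\Big]N + \fZeroMinusfStarOverH + \tfrac{\newL}{1+c},
\end{equation*}
and since \eqref{eqn:tmp32} guarantees the coefficient $(1-\deltaS)(1-\delta_1)-1+\tfrac{c}{(c+1)^2}$ is positive, rearranging gives $N\leq \gDeltaSDeltaOne\big[\fZeroMinusfStarOverH+\tfrac{\newL}{1+c}\big]$, contradicting \eqref{eqn::n_upper_2}; hence $N\geq\nEps$ on $E$, which is the claim. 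The main obstacle is the $\alpha$-counting estimate: extracting the exact constant $\tfrac{c}{(c+1)^2}$ and boundary term $\tfrac{\newL}{1+c}$ requires care with the $\alphaMax$ cap (capped successful steps scale $\alpha_k$ by strictly less than $\gammaTwo$, so the telescoping of $\log_{\gammaOne}\alpha_k$ is only an inequality) and with correctly charging the true iterations having $\alpha_k<\gammaOne^{c}\alphaMin$ and the true unsuccessful iterations against the (necessarily false) unsuccessful steps at small $\alpha_k$ that \autoref{AA3} permits; this bookkeeping, rather than the Chernoff step or the telescoping decrease, is where the genuine difficulty lies.
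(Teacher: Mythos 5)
The paper itself does not prove \autoref{thm2} --- it explicitly defers to the reference \cite{cartis_randomised_2022} --- so your outline should be judged against the architecture of that cited proof, which it reproduces faithfully: a Chernoff lower-tail bound on the number of true iterations among the first $N$ (obtained from the conditional-independence structure of \autoref{AA2} via a moment-generating-function/stochastic-domination argument), combined with a deterministic contradiction on the event $\{N_T\geq(1-\delta_1)(1-\delta_S)N\}$ that plays the telescoped decrease bound $\ntsAlphaZeroGammaOneCL\leq \fZeroMinusfStarOverH$ from \autoref{AA4}--\autoref{AA5} against an $\alpha$-counting bound on $N_T$. You have also correctly reverse-engineered the exact intermediate inequality
\begin{equation*}
N_T \;\leq\; \Bigl[1-\tfrac{c}{(c+1)^2}\Bigr]N + \ntsAlphaZeroGammaOneCL + \tfrac{\newL}{1+c}
\end{equation*}
that is needed to produce the prefactor $\nPreFactorTR$ and the additive term $\newL/(1+c)$ in \eqref{eqn::n_upper_2}.

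The one substantive reservation is that this $\alpha$-counting inequality --- which you yourself identify as the locus of all the difficulty --- is asserted rather than derived. It is not a routine telescoping of $\log_{\gamma_1}\alpha_k$: one must partition the $N$ iterations by success/failure and by whether $\alpha_k$ sits above or below the thresholds $\alphaMin$ and $\gamma_1^c\alphaMin$, use \autoref{AA3} to rule out true unsuccessful iterations at small $\alpha_k$, handle the $\alphaMax$ cap (which breaks exact telescoping into an inequality), and then balance the count of successful against unsuccessful steps using $\gamma_2=\gamma_1^{-c}$; the specific constant $c/(c+1)^2$ (rather than the $c/(c+1)$ appearing in the companion first-order result) emerges only from this balancing. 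As a blind attempt your proposal is therefore a correct and well-targeted skeleton of the same proof the paper cites, but it stops short of the combinatorial lemma that carries the actual content; a complete write-up would also need to close the small logical gap at the end, where "$N\leq g(\deltaS,\deltaOne)[\cdots]$" contradicts "$N\geq g(\deltaS,\deltaOne)[\cdots]$" only if one of the two inequalities in the chain is strict.
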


The complete proof of \autoref{thm2} can be found in \cite{cartis_randomised_2022}.

\begin{remark}
Note that $c/(c+1)^2 \in (0, 1/4]$ for $c \in \N^+$, and so \eqref{eqn::deltaSConditionThmTwo} and \eqref{eqn:tmp32} can only be satisfied for some $c$ and $\deltaOne$ given that $\deltaS < \frac{1}{4}$. Thus our theory requires that an iteration is true with probability at least $\frac{3}{4}$. Compared to the analysis in \cite{Cartis:2017fa}, which requires that an iteration is true with probability at least $\frac{1}{2}$, our condition imposes a stronger requirement. This is due to the high probability nature of our result, while the convergence result in \cite{Cartis:2017fa} is in expectation\footnote{Our result does imply a bound on $\nEps$, see \cite{cartis_randomised_2022}, Remark 2}. Furthermore, we will see in \autoref{lem:arbitatratyDeltaS} that we are able to impose arbitrarily small value of $\delta_S$, thus satisfying this requirement, by choosing an appropriate dimension of the local reduced model $\mKHat{\sHat}$.
\end{remark}

The general nature of this analytical framework, though abstract, will allow us to fit both first and second-order analyses of convergence into it, while substantially shortening the proofs as we will only need to show that the above four assumptions are satisfied.

\section{A Random Subspace Cubically-Regularized Algorithm}

Now we describe the main algorithm in this paper, a random subspace cubic regularization method, R-ARC (Algorithm \ref{alg:R-ARC}). At each iteration $k$, R-ARC draws a random matrix $S_k\in\R^{l\times d}$ and then a cubic regularization local model of the objective is constructed in the subspace generated by the rows of $S_k$; this model uses a (randomly) projected gradient $S_k\nabla f(x_k)$ and projected Hessian $S_k\nabla^2 f(x_k)S_k^T$ onto the relevant subspace, which only requires access to a subset of directional derivatives/finite differences, rather than the full gradient/Hessian. This local model (in \eqref{eqn::mKHatSpec_CB}) is then approximately and locally minimized in the reduced space, in $\R^l$, to first- (and when needed, second-) order criticality for the reduced model.
The model minimization conditions in \eqref{tmp:CBGN:3}--\eqref{hessMKGeq0} are subspace applications of standard termination criteria for (full-dimensional) cubic regularization methods \cite[Chapter 3]{cartis_evaluation_2022}.
The step is then accepted (or rejected), using similar criteria of sufficient objective decrease (or otherwise) to full dimensional ARC, and the regularization weight $1/\alpha_k$ is increased when insufficient progress is made,  in order to shorten the step to increase the chance of decreasing the objective subsequently.

\begin{algorithm}
\begin{description}

\item[Initialization] \ \\
 Choose a matrix distribution $\cal{S}$ of matrices $S\in \rLTimesD$. 
 Choose constants $\gamma_1\in (0,1)$, $\gamma_2 >1$, $\theta \in (0,1)$, $\kappaT, \kappaS \geq 0$
 and $\alpha_{\max}>0$ such that $
        \gamma_2 = \oneOverGammaOneC,
        $
    for some $c\in \N^+$.
 Initialize the algorithm by setting 
 $x_0 \in \R^d$, 
 $\alpha_0 = \alphaMax \gamma^p$
 for some $p\in \N^+$ and $k=0$.

\item[1. Compute a reduced model and a trial step] \ \\
 In Step 1 of \autoref{alg:generic}, draw a random matrix $S_k \in \R^{l \times d}$ from $\cal{S}$, and let
 \begin{align}
     \mKHat{\sHat} 
     & = \fK + \innerProduct{S_k\gradFK}{ \sHat} + 
     \frac{1}{2} \innerProduct{\sHat}{S_k\hK S_k^T\sHat}
     + \frac{1}{3\alphaK}\normTwo{S_k^T \sHat}^3 \notag \\
     & = \hat{q}_k(\hat{s}) + \frac{1}{3\alphaK}\normTwo{S_k^T \sHat}^3, \label{eqn::mKHatSpec_CB} 
 \end{align}
 where $\hat{q}_k(\hat{s})$ is the second-order Taylor series of $f(x_k + S_k^T \sKHat)$ around $x_k$;
 
 Compute $\sKHat$ by approximately minimizing \eqref{eqn::mKHatSpec_CB} such that
 \begin{align}
     \mKHat{\sKHat} \leq \mKHat{0} \label{tmp:CBGN:3}\\
     \normTwo{\grad \mKHat{\sKHat}} \leq 
     \kappa_T \normTwo{S_k^T \sKHat}^2 \label{tmp:CBGN:5} \\
     \grad^2 \mKHat{\sKHat} \succeq -\kappaS \normTwo{S_k^T \sKHat}, \label{hessMKGeq0}
 \end{align}
 where we may drop \eqref{hessMKGeq0} if only convergence to a first order critical point is desired. 
 
 Compute a trial step 
 \begin{align}
     s_k = w_k(\sHat_k) = S_k^T \sHat_k.\label{eqn::wKSpec_CB}
 \end{align}

\item[2. Check sufficient decrease]\ \\  
In Step 2 of \autoref{alg:generic}, check sufficient decrease as defined by the condition
\begin{equation}
    \fK - \fKPlusOne \geq \theta \squareBracket{\hat{q}_k(0) - 
    \hat{q}_k(\hat{s})
    }. \label{eq:cubic:sufficient_decrease}
\end{equation}

\item[3, Update the parameter $\alphaK$ and possibly take the trial step $\sK$]\ \\
If \eqref{eq:cubic:sufficient_decrease} holds, set $\xKPlusOne = \xK + \sK$ and $\alphaKPlusOne = \min \set{\alphaMax, \gammaTwo\alphaK}$ [successful iteration]. 

Otherwise set $\xKPlusOne = \xK$ and $\alphaKPlusOne = \gammaOne \alphaK$ [unsuccessful iteration]

Increase the iteration count by setting $k=k+1$ in both cases.

\caption{\bf{Random subspace cubic regularization algorithm (R-ARC) }} \label{alg:R-ARC} 
\end{description}
\end{algorithm}

There are two main strategies for computing $\sKHat$ by minimizing \eqref{eqn::mKHatSpec_CB}, by requiring a factorization of $S_k \hessFK S_k^T$ (in a Newton-like algorithm), or repeated matrix-vector products involving $S_k \hessFK S_k^T$ (in a Lanczos-based algorithm). Although we note that the iteration  complexity, and the evaluation complexity of $f$ and its derivatives are unaffected by the computation complexity of calculating $\sKHat$, \autoref{alg:R-ARC} significantly reduces the computation of this inner problem by reducing the dimension of the Hessian from $d \times d$ to $l \times l$, compared to the full-space counterpart; this is in addition to reducing the gradient and Hessian evaluation cost per iteration (due to only needing projected/directional components).

 \autoref{alg:R-ARC} is a specific variant of \autoref{alg:generic}. Therefore the convergence result in \autoref{thm2} can be applied, provided that the four assumptions of the theorem can be shown to hold here. In subsequent sections, we give different definitions of the two key terms in the convergence result \autoref{thm2}: $\nEps$ and true iterations. These lead to different requirements for the matrix distribution $\cal{S}$, and iteration complexities to drive $\normTwo{\gradFK} < \epsilon$, $\lambdaMin{S_k \hessFK S_k^T} > -\epH $ and/or $\lambdaMin{\hessFK} > -\epH$. For each of these different optimality measures, we will need to show that the four assumptions in \autoref{thm2} hold.

\section{Optimal Global Rate of R-ARC to First Order Critical Points} 
\label{sec:rarc-convergence}

Our first convergence result shows \autoref{alg:R-ARC} drives $\normTwo{\gradFK}$ below $
\epsilon$ in $\mathO{\epsilon^{-3/2}}$ iterations, given that $\cal{S}$ has an embedding property (a necessary condition of which is that $\cal{S}$ is an oblivious subspace embedding for matrices of rank $r+1$, where $r$ is the maximum rank of $\hessFK$ across all iterations).

\paragraph{Define $\nEps$ and true iterations based on (one-sided) subspace embedding}
In order to prove convergence of \autoref{alg:R-ARC}, we show that \autoref{AA2}, \autoref{AA3}, \autoref{AA4}, \autoref{AA5} that are needed for \autoref{thm2} to hold are satisfied. To this end, we first define $\nEps$, the criterion for convergence, as $\min \{k: \normTwo{\grad f(x_{k+1})} \leq \epsilon \} $.
Then, we define the true iterations based on achieving an embedding of the Hessian and the gradient.  

\begin{definition}\label{def:true:CBGN}
Let $\epSTwo \in (0,1)$, $\sMax >0$. Iteration $k$ is ($\epSTwo, \sMax)$-true if

\begin{align}
    &\normTwo{S_k \MK \zK}^2 \geq (1-\epSTwo) \normTwo{\MK \zK}^2, \texteq{for all $z_k \in \R^{d+1}$} \label{tmp:CBGN:1}\\
    &\normTwo{S_k} \leq \sMax, \label{tmp:CBGN:10}
\end{align}

where $\MK = \squareBracket{\gradFK \quad \hessFK} \in \R^{d \times (d+1)}$. Note that all vectors are column vectors.
\end{definition}

\begin{remark}
 In \cite{cartis_randomised_2022}, a general random subspace framework is presented and analysed in terms of global rates of convergence, that can include cubic regularization variants, and for which the ensuing complexity bound is $\mathcal{O}(\epsilon^{-2})$. The latter result only requires  that the gradient of $f$ is Lipschitz continuous and that the embedding ensemble $\mathcal{S}$ satisfies \eqref{tmp:CBGN:10} and \eqref{tmp:CBGN:1} with $M_k:=[\gradFK]$ (so $S_k$ only needs to embed correctly the gradient vector, one-sidedly and with some probability). The stronger requirement in \eqref{tmp:CBGN:1}, namely, that
 $S_k$ embeds correctly not only the gradient but also the Hessian matrix, as well as the typical second-order assumption of Lipschitz continuous Hessian matrix, is used in order to achieve the optimal complexity of order $\epsilon^{-3/2}$ for R-ARC.
\end{remark}

\subsection{Some useful lemmas}

In this subsection we provide some useful results needed to prove our assumptions in \autoref{thm2}.

\begin{lemma} \label{succStepDecrease}
In \autoref{alg:R-ARC}, if iteration $k$ is successful, then
\[f(x_{k+1}) \leq \fun{f}{x_k} -\frac{\theta}{3\alphaK} \normTwo{S_k^T \sKHat}^3 \].
\end{lemma}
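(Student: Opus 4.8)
The plan is to combine the sufficient-decrease test that defines a successful iteration with the first model-minimization condition \eqref{tmp:CBGN:3}. Since iteration $k$ is successful, the acceptance criterion \eqref{eq:cubic:sufficient_decrease} holds, and because $\xKPlusOne = \xK + \sK$ on a successful step, it reads
\[
\fK - f(\xKPlusOne) \geq \theta\squareBracket{\qKHat{0} - \qKHat{\sKHat}}.
\]
Hence it suffices to show that the Taylor-part decrease dominates the cubic term, namely $\qKHat{0} - \qKHat{\sKHat} \geq \frac{1}{3\alphaK}\NsKTSKHat^3$, after which the claim follows by a single substitution.

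First I would rewrite $\hat{q}_k$ using the definition of the reduced model \eqref{eqn::mKHatSpec_CB}, which separates off the cubic regularization term and gives $\qKHat{\sKHat} = \mKHat{\sKHat} - \frac{1}{3\alphaK}\NsKTSKHat^3$. Combined with $\qKHat{0} = \mKHat{0} = \fK$ (the cubic term vanishes at $\hat{s}=0$), this yields
\[
\qKHat{0} - \qKHat{\sKHat} = \squareBracket{\fK - \mKHat{\sKHat}} + \frac{1}{3\alphaK}\NsKTSKHat^3.
\]

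The key step is then to discard the bracketed quantity using the model-decrease requirement \eqref{tmp:CBGN:3}, which asserts $\mKHat{\sKHat} \leq \mKHat{0} = \fK$, so that $\fK - \mKHat{\sKHat} \geq 0$. This leaves $\qKHat{0} - \qKHat{\sKHat} \geq \frac{1}{3\alphaK}\NsKTSKHat^3$, and feeding this back into the sufficient-decrease inequality delivers $f(\xKPlusOne) \leq \fK - \frac{\theta}{3\alphaK}\NsKTSKHat^3$.

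This argument is purely algebraic, so I do not anticipate a genuine obstacle; the only point requiring care is correctly peeling the cubic regularization term off the second-order Taylor part $\hat{q}_k$, and recognizing that condition \eqref{tmp:CBGN:3} is precisely what converts the (sign-indefinite) quadratic model decrease into a clean lower bound by the cubic term alone.
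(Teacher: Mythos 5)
Your proof is correct and follows essentially the same route as the paper's: both decompose $\hat{q}_k(\hat{s}_k)$ as $\mKHat{\sKHat}$ minus the cubic regularization term, invoke the sufficient-decrease test \eqref{eq:cubic:sufficient_decrease}, and discard the nonnegative model decrease via \eqref{tmp:CBGN:3}. No gaps.
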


\begin{proof}
From the definition of successful iterations and \eqref{eq:cubic:sufficient_decrease}
\begin{align}
    f(x_{k+1}) 
    = f(x_k + s_k)
    & \leq \fK - \theta \squareBracket{\mKHat{0} - \mKHat{\sKHat}} - \frac{\theta}{3\alphaK} \normTwo{S_k^T \sKHat}^3 \notag \\
    & \leq \fK - \frac{\theta}{3\alphaK}\normTwo{S_k^T \sKHat}^3, \label{tmp:CBGN:4}
\end{align}
where in the last inequality, we used \eqref{tmp:CBGN:3}.
\end{proof}

The gradient of the model has the expression
\begin{equation}
    \grad \mKHat{\sKHat} = S_k \gradFK + S_k \hessFK S_k^T \sKHat + 
    \frac{1}{\alphaK} S_k S_k^T \sKHat \normTwo{\SK^T \sKHat}. 
    \label{mkGradCubic}
\end{equation}

The following lemma bounds the size of the step at true iterations.
\begin{lemma} \label{stepSizeSubEmbed}
Assume $f$ is twice continuosly differentiable with $\LH$-Lipschitz Hessian $\grad^2 f$ 
and $k < \nEps$.
Suppose that iteration $k$ is ($\epSTwo, \sMax)$-true. We have
\begin{equation}
\NsKTSKHat^2 \geq \frac{\epsilon}{2}
\min\set{\frac{2}{\LH}, 
\bracket{\frac{1}{\alphaK}\sMax + \kappaT}^{-1}
\sqrt{1 - \epSTwo}} \label{Lemma5.2.2_eqn}
\end{equation}

\end{lemma}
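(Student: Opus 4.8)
The plan is to sandwich the reduced quadratic-model gradient $\normTwo{\grad \qKHat{\sKHat}}$ between a lower bound (coming from the embedding and the Lipschitz Hessian) and an upper bound (coming from the termination criterion), and then combine the two by a short two-case argument on the magnitude of $\NsKTSKHat^2$.

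\textbf{Lower bound.} The key step is to apply the one-sided embedding \eqref{tmp:CBGN:1} to the bespoke vector $\zK \in \R^{d+1}$ whose first entry is $1$ and whose remaining $d$ entries are $\SK^T\sKHat$. Since $\MK = \squareBracket{\gradFK \quad \hessFK}$, this gives $\MK \zK = \gradFK + \hessFK \SK^T\sKHat$, which is exactly the gradient of the \emph{full-space} quadratic Taylor model of $f$ at $\xK$ evaluated at the realized step $\sK = \SK^T\sKHat$; while $\SK\MK\zK = \SK\gradFK + \SK\hessFK\SK^T\sKHat = \grad\qKHat{\sKHat}$ is the gradient of the \emph{reduced} quadratic model. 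Hence \eqref{tmp:CBGN:1} yields $\normTwo{\grad\qKHat{\sKHat}} \geq \sqrt{1-\epSTwo}\,\normTwo{\gradFK + \hessFK\sK}$. I would then relate $\gradFK + \hessFK\sK$ to the true gradient at the trial point through a Taylor expansion with integral remainder and the $\LH$-Lipschitz continuity of $\grad^2 f$, obtaining $\normTwo{\gradFK + \hessFK\sK} \geq \normTwo{\grad f(\xKPlusOne)} - \frac{\LH}{2}\normTwo{\sK}^2$. Because $k < \nEps$ forces $\normTwo{\grad f(\xKPlusOne)} > \epsilon$, this produces $\normTwo{\grad\qKHat{\sKHat}} \geq \sqrt{1-\epSTwo}\bracket{\epsilon - \frac{\LH}{2}\NsKTSKHat^2}$.

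\textbf{Upper bound.} From the model-gradient formula \eqref{mkGradCubic} I would isolate the quadratic part, $\grad\qKHat{\sKHat} = \grad\mKHat{\sKHat} - \frac{1}{\alphaK}\normTwo{\SK^T\sKHat}\,\SK\SK^T\sKHat$. Bounding the first term by the stopping rule \eqref{tmp:CBGN:5}, $\normTwo{\grad\mKHat{\sKHat}} \leq \kappaT\normTwo{\SK^T\sKHat}^2$, and the second by $\normTwo{\SK\SK^T\sKHat} \leq \normTwo{\SK}\normTwo{\SK^T\sKHat} \leq \sMax\normTwo{\SK^T\sKHat}$ via \eqref{tmp:CBGN:10}, gives $\normTwo{\grad\qKHat{\sKHat}} \leq \bracket{\frac{1}{\alphaK}\sMax + \kappaT}\NsKTSKHat^2$.

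\textbf{Combination.} Writing $\sigma = \NsKTSKHat^2$ and $c_1 = \frac{1}{\alphaK}\sMax + \kappaT$, the two bounds give $\sqrt{1-\epSTwo}\bracket{\epsilon - \frac{\LH}{2}\sigma} \leq c_1\sigma$. If $\frac{\LH}{2}\sigma \geq \frac{\epsilon}{2}$ then already $\sigma \geq \frac{\epsilon}{2}\cdot\frac{2}{\LH}$; otherwise $\epsilon - \frac{\LH}{2}\sigma > \frac{\epsilon}{2}$, and rearranging the sandwich inequality gives $\sigma \geq \frac{\epsilon}{2}c_1^{-1}\sqrt{1-\epSTwo}$. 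Taking the minimum over the two cases produces \eqref{Lemma5.2.2_eqn}. I expect the single nontrivial idea to be the choice of $\zK$ in the first step: recognizing that setting its leading entry to $1$ and its tail to the realized step $\SK^T\sKHat$ turns the abstract embedding hypothesis into the concrete statement that the reduced quadratic gradient one-sidedly dominates the full quadratic gradient along the step. Everything afterwards is the classical ARC step-size lower-bound argument transplanted into the sketched space; the only points needing care are keeping the direction of \eqref{tmp:CBGN:1} straight (it lower-bounds the sketched norm) and organizing the case split so that the factor $\epsilon - \frac{\LH}{2}\sigma$ stays positive in the branch where the sandwich inequality is actually used.
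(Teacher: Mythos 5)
Your proposal is correct and follows essentially the same route as the paper's proof: the same choice of test vector $z_k = [1,\ (S_k^T\hat{s}_k)^T]^T$ in the embedding condition, the same Taylor-with-integral-remainder bound $\frac{1}{2}L_H\|s_k\|_2^2$ on the quadratic-model error, the same upper bound via \eqref{tmp:CBGN:5} and $\|S_k\|_2\le S_{\max}$, and the same two-case split on whether $L_H\|s_k\|_2^2$ exceeds $\epsilon$. The only difference is presentational (you phrase the sketched quantity as the reduced quadratic-model gradient and assemble the two bounds into a single sandwich inequality before splitting cases), which changes nothing of substance.
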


\begin{proof}
\eqref{mkGradCubic} and the triangle inequality give
\begin{align}
    \normTwo{\SK \gradFK + \SK \hessFK \SK^T \sKHat}  \notag
    &= \normTwo{\frac{1}{\alphaK}\SK \SK^T \sKHat \normTwo{\SK^T \sKHat} - \grad\mKHat{\sKHat}} \notag \\
    & \leq \frac{1}{\alphaK}\normTwo{\SK}\normTwo{\SK^T\sKHat}^2 + \normTwo{\grad \mKHat{\sKHat}}\notag \\
    & \leq \bracket{\frac{1}{\alphaK}\normTwo{\SK}+\kappaT} \normTwo{\SK^T \sKHat^2}^2 \texteq{by \eqref{tmp:CBGN:5}} \\
    & \leq \bracket{\frac{1}{\alphaK}\sMax+\kappaT} \normTwo{\SK^T \sKHat^2}^2,
    \label{tmp:CBGN:9}
\end{align}
where we used \eqref{tmp:CBGN:10}.
On the other hand, we have that
\begin{align}
    & \normTwo{\SK \gradFK + \SK \hessFK \SK^T \sKHat} \notag \\
    & = \normTwo{\SK\MK \squareBracket{1, 
    (\SK^T \sKHat)^T}^T} \notag \\
    & \geq \sqrt{\oneMinusEpSTwo} 
    \normTwo{\gradFK + \hessFK s_k} 
    \texteq{by \eqref{tmp:CBGN:1} with $z_k
    = \squareBracket{1, (\SK^T \sKHat)^T}^T$} 
    \notag\\
    & = \sqrt{\oneMinusEpSTwo} \normTwo{ \grad f(x_{k+1}) - \squareBracket{\grad f(x_{k+1}) - \gradFK - \hessFK s_k}} \\
    & \geq \sqrt{\oneMinusEpSTwo} 
        \left| 
            \normTwo{ \grad f(x_{k+1})} - 
            \normTwo{\squareBracket{\grad f(x_{k+1}) -     \gradFK - \hessFK s_k}} 
        \right|
    \label{tmp:CBGN:8}
\end{align}
Note that by Taylor's Theorem, because $f$ is twice continuously differentiable with $\LH$-Lipschitz $\grad^2 f$, we have that $\grad f (x_k + s_k) = \gradFK + \int_0^1 \grad^2 f(x_k + ts_k) s_k dt$. 
Therefore, we have 
\begin{align}
    \normTwo{\grad f(x_{k+1}) - \gradFK - \hessFK \sK} & 
    = \normTwo{\int_0^1 \squareBracket{\grad^2 f(x_k + ts_k) - \hessFK}s_k dt} \\
    & \leq \int_0^1 \normTwo{s_k} \normTwo{\grad^2 f(x_k + ts_k) - \hessFK} dt \\
    & \leq \normTwo{s_k} \int_0^1 \LH t \normTwo{s_k} dt \\
    & = \frac{1}{2}\LH\normTwo{s_k}^2
\end{align}
by Lipschitz continuity of $\grad^2 f$.
Next we discuss two cases,
\begin{enumerate}
    \item If $\LH \normTwo{\sK}^2 > \epsilon$, 
    then we have the desired result in \eqref{Lemma5.2.2_eqn}.
     
    \item If ${\LH} \normTwo{\sK}^2 \leq \epsilon$, then \eqref{tmp:CBGN:8}, 
    and the fact that $\normTwo{\grad f(x_{k+1})} \geq \epsilon$ 
    by $k < \nEps$, imply that 
        \[
            \normTwo{\SK\gradFK + \SK \hessFK \SK^T \sKHat} 
            \geq \sqrt{\oneMinusEpSTwo}\frac{\epsilon}{2}.
        \]
    Then \eqref{tmp:CBGN:9} implies
        \begin{equation}
            \normTwo{\SK^T \sK}^2 \geq \bracket{\frac{1}{\alphaK}\sMax + \kappaT}^{-1} 
            \sqrt{1 - \epSTwo}\frac{\epsilon}{2}.
            \notag
        \end{equation}
    This again gives the desired result.
\end{enumerate}

\end{proof}

\subsection{Satisfying the assumptions of \autoref{thm2}}
Here, for simplicity, we only address the case where $\cal{S}$ is the distribution of scaled Gaussian matrices; see our remark about other ensembles at the end of Section 4.3. 
Concerning scaled Gaussian matrices, we have the following standard results.

\begin{lemma}[Lemma 4.4.6 in \cite{Zhen-PhD}]\label{lem:GaussSMax:CubicSubspace}
Let $S \in \R^{l\times d}$ be a scaled Gaussian matrix (\autoref{def:Gaussian}). Then for any $\deltaSTwo > 0$, 
$S$ satisfies \eqref{tmp:CBGN:10} with probability $1-\deltaSTwo$ and 

\begin{equation}
    \sMax = 1 + \sqrt{\frac{d}{l}} + \sqrt{\frac{2\logOneOverDeltaSTwo}{l}}. \notag
\end{equation}

\end{lemma}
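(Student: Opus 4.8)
The plan is to reduce the claim to a classical concentration bound for the largest singular value of a Gaussian matrix with i.i.d.\ standard normal entries, and then do the rescaling bookkeeping. First I would write $S = \frac{1}{\sqrt{l}} G$, where $G \in \R^{l \times d}$ has i.i.d.\ $N(0,1)$ entries; this is exactly the law of \autoref{def:Gaussian}, since scaling each standard normal entry by $1/\sqrt{l}$ produces the $N(0, l^{-1})$ distribution. Because the spectral norm is the largest singular value and scales linearly, $\normTwo{S} = \sigma_{\max}(S) = \frac{1}{\sqrt{l}}\,\sigma_{\max}(G)$, so it suffices to control the top singular value of $G$.

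The core ingredient is the standard two-sided Gaussian concentration estimate for $\sigma_{\max}(G)$ (e.g.\ Davidson--Szarek / Vershynin), which I would invoke directly. It combines two classical facts: (i) the expectation bound $\E\!\left[\sigma_{\max}(G)\right] \leq \sqrt{l} + \sqrt{d}$, obtained from the Slepian--Gordon Gaussian comparison inequalities; and (ii) the fact that the map $G \mapsto \sigma_{\max}(G) = \normTwo{G}$ is $1$-Lipschitz with respect to the Frobenius norm (immediate from the reverse triangle inequality for the operator norm), so the Gaussian concentration-of-measure inequality applies. Together these give, for every $t > 0$,
\[
    \probability{\sigma_{\max}(G) \geq \sqrt{l} + \sqrt{d} + t} \leq e^{-t^2/2}.
\]

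Dividing the event through by $\sqrt{l}$ converts this into a tail bound for $\normTwo{S}$,
\[
    \probability{\normTwo{S} \geq 1 + \sqrt{\frac{d}{l}} + \frac{t}{\sqrt{l}}} \leq e^{-t^2/2},
\]
and the final step is to calibrate $t$ so the failure probability equals $\deltaSTwo$. Choosing $t = \sqrt{2\logOneOverDeltaSTwo}$ yields $e^{-t^2/2} = \deltaSTwo$ and $t/\sqrt{l} = \sqrt{2\logOneOverDeltaSTwo / l}$, so on the complementary event — which holds with probability at least $1 - \deltaSTwo$ — we have $\normTwo{S} \leq 1 + \sqrt{d/l} + \sqrt{2\logOneOverDeltaSTwo/l} = \sMax$, which is precisely \eqref{tmp:CBGN:10}.

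Since ingredients (i) and (ii) are both classical and the result is quoted as standard, there is no genuine analytic obstacle here; the entire argument is an application of a known theorem followed by elementary rescaling. The only points requiring care are the bookkeeping in passing from $G$ to $S = G/\sqrt{l}$ and the exact choice of $t$ that matches the target probability. If one wanted a fully self-contained proof rather than a citation, the main effort would shift to establishing the expectation bound $\E[\sigma_{\max}(G)] \leq \sqrt{l}+\sqrt{d}$ via the Gaussian comparison inequalities, which is the one nontrivial step.
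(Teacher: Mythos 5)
Your proof is correct and follows the same standard route the paper relies on: the cited Lemma 4.4.6 of \cite{Zhen-PhD} is precisely the Davidson--Szarek/Gordon bound $\probability{\sigma_{\max}(G)\geq \sqrt{l}+\sqrt{d}+t}\leq e^{-t^2/2}$ applied to $S=G/\sqrt{l}$ with $t=\sqrt{2\log(1/\delta_S^{(2)})}$, exactly as you do. The rescaling and calibration of $t$ both check out, and the resulting expression matches $\sMax$ term for term.
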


\begin{lemma}[Theorem 2.3 in \cite{10.1561/0400000060}] \label{lem:Gauss_embedding}
Let $\epSTwo \in (0,1)$ and $S \in \R^{l \times d}$ 
be a scaled Gaussian matrix. 
Then for any fixed $d \times (d+1)$ matrix $M$ with rank at most $r+1$, 
with probability $1-\deltaSThree$ we have that simultaneously for all 
$z \in \R^{d+1}$, $\normTwo{SMz}^2 \geq (1-\epSTwo) \normTwo{Mz}^2$, 
where
\begin{equation}
    \deltaSThree = \deltaSThreeExpression \label{deltaSThree:eq}
\end{equation}
and $C_l$ is an absolute constant. 
\end{lemma}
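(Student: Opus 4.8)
The plan is to recognize this as a standard oblivious subspace embedding bound and prove it by reducing to a statement about the smallest singular value of a Gaussian matrix restricted to the range of $M$, then controlling that singular value by an $\epsilon$-net argument on the unit sphere of a low-dimensional subspace. First I would observe that $\{Mz : z \in \R^{d+1}\}$ is exactly the column space of $M$, a linear subspace $\mathcal{V} \subseteq \R^d$ of dimension $s = \mathrm{rank}(M) \leq r+1$. By homogeneity of both sides in $z$, it suffices to establish $\normTwo{Sy}^2 \geq 1-\epSTwo$ for every unit vector $y \in \mathcal{V}$; equivalently $\sigma_{\min}(S|_{\mathcal{V}})^2 \geq 1-\epSTwo$, where $S|_{\mathcal{V}}$ denotes $S$ acting on $\mathcal{V}$. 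Choosing an orthonormal basis $U \in \R^{d \times s}$ of $\mathcal{V}$ and using the rotational invariance of the Gaussian distribution, $SU \in \R^{l \times s}$ is itself a scaled Gaussian matrix, so the problem reduces to lower-bounding the least singular value of an $l \times s$ scaled Gaussian matrix with $s \leq r+1$.

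Next I would set up the net argument. For a fixed unit $y \in \mathcal{V}$, the entries of $Sy$ are i.i.d.\ $N(0,1/l)$, so $l\normTwo{Sy}^2$ is a $\chi^2_l$ variable; the Laurent--Massart lower-tail bound gives $\probability{\normTwo{Sy}^2 \leq 1 - t} \leq \pow{e}{-l t^2 / C}$ for an absolute constant $C$. I would then take a $\gamma$-net $\mathcal{N}$ of the unit sphere of $\mathcal{V}$, whose cardinality is at most $(1+2/\gamma)^s \leq \pow{e}{C'(r+1)}$, and union-bound the pointwise estimate over $\mathcal{N}$. To pass from the net to the whole sphere (the step that actually exploits the low dimension), I would simultaneously control the operator norm $\normTwo{S|_{\mathcal{V}}}$ from above by the same concentration-plus-net technique, and then use the triangle inequality $\normTwo{Sy} \geq \normTwo{Sy_0} - \normTwo{S(y-y_0)}$ with $y_0$ the nearest net point to upgrade the lower bound on $\mathcal{N}$ to a uniform lower bound on all unit $y \in \mathcal{V}$.

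Finally I would collect the parameters: choosing $\gamma$ as a fixed constant and $t$ proportional to $\epSTwo$, the union bound yields a failure probability of the form $\pow{e}{-l(\epSTwo)^2/C_l + r + 1}$, matching $\deltaSThree$, with $C_l$ absorbing the net granularity and the $\chi^2$-tail constants. I expect the main obstacle to be precisely this net-to-sphere extension: a lower bound on $\normTwo{Sy}$ cannot be union-bounded directly over the continuum of unit vectors, so it must be obtained indirectly by also bounding the operator norm from above and paying the low-dimensional net cardinality $\pow{e}{O(r+1)}$ — this is where the $r+1$ term in the exponent originates. A cleaner alternative, which bypasses the net entirely, is to invoke the Davidson--Szarek/Gordon concentration bound for the smallest singular value of a Gaussian matrix directly; I would mention this as a shortcut but keep the net argument since it transparently produces the stated exponent.
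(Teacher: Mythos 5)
The paper does not actually prove this lemma: it is imported verbatim as a citation to Theorem 2.3 of the sketching survey \cite{10.1561/0400000060}, so there is no internal argument to compare against. Your reconstruction is the standard proof that underlies that citation, and it is correct: the reduction of $\{Mz : z\in\R^{d+1}\}$ to the column space $\mathcal{V}$ of dimension $s\leq r+1$, the use of rotational invariance to replace $S|_{\mathcal{V}}$ by an $l\times s$ scaled Gaussian block, the $\chi^2_l$ lower-tail bound for a fixed unit vector, and the net argument (with the accompanying upper bound on $\normTwo{S|_{\mathcal{V}}}$ to pass from the net to the sphere, which is indeed the step that cannot be handled by a naive union bound over the continuum) are exactly the ingredients one needs; the Davidson--Szarek shortcut you mention is also a legitimate alternative. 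The one place where you are slightly loose is the bookkeeping of the exponent: a $\gamma$-net of the unit sphere of $\mathcal{V}$ with $\gamma$ a small fixed constant has cardinality $(1+2/\gamma)^{s}= e^{s\log(1+2/\gamma)}$, so the union bound contributes $c\,(r+1)$ to the exponent with $c=\log(1+2/\gamma)>1$, not $(r+1)$ with coefficient exactly one; and this constant multiplies the rank term, so it cannot be absorbed into $C_l$, which only rescales the $l(\epSTwo)^2$ term. This yields a failure probability of the form $e^{-l(\epSTwo)^2/C_l + c(r+1)}$ rather than literally $\deltaSThreeExpression$. That discrepancy is immaterial to every use of the lemma in the paper (one simply enlarges $l$ by the constant factor $c$ in Theorem \ref{thm:CBGN_subspace_first} and its corollaries), but as stated your parameter collection does not quite land on the displayed expression, and it would be worth either tracking the constant explicitly or noting that the lemma should be read up to such a constant in the rank term.
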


This result implies that scaled Gaussian matrices provide a subspace embedding of $M_k$ if $l\sim (r+1)/(\epsilon_S^{(2)})^2$.  

\paragraph{Satisfying \autoref{AA2} (page \pageref{AA2})}
\begin{lemma}
    Suppose that $\hessFK$ has rank at most $r\leq d$ for all $k$; $S \in \R^{l \times d}$ is drawn as a scaled Gaussian matrix. Let $\epSTwo, \deltaSTwo \in (0,1)$ such that $\deltaSTwo + \deltaSThree < 1$ where $\deltaSThree$ is defined in \eqref{deltaSThree:eq}.
    Then \autoref{alg:R-ARC} satisfies \autoref{AA2} with $\deltaS = \deltaSTwo + \deltaSThree$ and $S_{max} = 1 + \sqrt{\frac{d}{l}} + \sqrt{\frac{2\logOneOverDeltaSTwo}{l}}$, with true iterations defined in \autoref{def:true:CBGN}.
    \label{lem:arbitatratyDeltaS}
\end{lemma}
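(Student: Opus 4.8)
The plan is to verify the three components of \autoref{AA2} separately: the per-iteration conditional probability bound, the base case, and the conditional independence of the truth values. The starting point is to condition on $x_k = \barXK$. This fixes the matrix $\MK = \squareBracket{\gradFK \quad \hessFK}$ to a \emph{deterministic} matrix, so that the only remaining randomness lies in the freshly drawn scaled Gaussian $\SK$. I would then decompose the event $\{T_k = 1\}$ into the two defining conditions of \autoref{def:true:CBGN}, namely the embedding condition \eqref{tmp:CBGN:1} and the norm bound \eqref{tmp:CBGN:10}, and bound each failure probability.

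The key structural observation is that $\MK$ has rank at most $r+1$: by hypothesis the $d$ columns of $\hessFK$ span a space of dimension at most $r$, and appending the single gradient column $\gradFK$ can increase the rank by at most one. This is exactly the rank hypothesis needed to invoke \autoref{lem:Gauss_embedding} with $M = \MK$, which gives that \eqref{tmp:CBGN:1} fails with probability at most $\deltaSThree$. Separately, \autoref{lem:GaussSMax:CubicSubspace} gives that the norm bound \eqref{tmp:CBGN:10} fails with probability at most $\deltaSTwo$, with the stated value of $\sMax$. A union bound over these two failure events then yields $\probabilityGivenXK{T_k} \geq 1 - (\deltaSTwo + \deltaSThree) = 1 - \deltaS$, and the standing assumption $\deltaSTwo + \deltaSThree < 1$ (together with positivity of each term) guarantees $\deltaS \in (0,1)$. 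The base case $\probability{T_0} \geq 1 - \deltaS$ follows by the identical argument, since $x_0$ is deterministic and hence $M_0$ is already fixed.

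For the conditional independence requirement, I would observe that $T_k$ is a deterministic function of the pair $(\SK, x_k)$, while each $T_j$ with $j<k$ is a function of $(S_j, x_j)$ alone. Since the sketching matrices are drawn independently across iterations from $\mathcal{S}$, conditioning on $x_k = \barXK$ renders $T_k$ a function of $\SK$ only, which is independent of $S_0,\dots,S_{k-1}$ and therefore of $T_0,\dots,T_{k-1}$. This establishes the conditional independence in \autoref{AA2}. The main subtlety — rather than any deep obstacle — is ensuring that both defining conditions of \autoref{def:true:CBGN} hold for the \emph{same} realization of $\SK$, which is precisely why the union bound over the two failure events is required rather than treating them in isolation; verifying the rank-$(r+1)$ hypothesis uniformly in $k$ then follows directly from the global rank-$r$ assumption on $\hessFK$.
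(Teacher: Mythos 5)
Your proposal is correct and follows essentially the same route as the paper's proof: fixing $x_k=\barXK$ to make $M_k$ deterministic of rank at most $r+1$, bounding the two failure events via \autoref{lem:Gauss_embedding} and \autoref{lem:GaussSMax:CubicSubspace}, combining them with a union bound, and noting that conditional independence holds because, given $x_k$, truth of the iteration depends only on the freshly drawn $S_k$. No gaps.
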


\begin{proof}
Let $x_k = \barXK \in \R^d$ be given. This determines $\gradFK, \hessFK$ and hence $M_k$. As $\hessFK$ has rank at most $r$, $M_k$ has rank at most $r+1$. 
Consider the events
\begin{align*}
    &\aKOne = \set{\normTwo{S_k M_k z}^2 \geq (1-\epSTwo) \normTwo{\MK z}^2, \quad \forall z\in \R^{d+1}} \\
    &\aKTwo = \set{\normTwo{S_k} \leq S_{max}}.
\end{align*}
Note that iteration $k$ is true if and only if $\aKOne$ and $\aKTwo$ occur. It follows from \autoref{lem:Gauss_embedding} that $\probabilityGivenXK{\aKOne} \geq 1-\deltaSThree$; and from \autoref{lem:GaussSMax:CubicSubspace} that $\probability{\aKTwo} \geq 1-\deltaSTwo$. Since $\aKTwo$ is independent of $x_k$, we have $\probabilityGivenXK{\aKTwo} = \probability{\aKTwo} \geq 1-\deltaSTwo$. 

Hence, we have $\probabilityGivenXK{{\aKOne}\intersect{\aKTwo}} \geq 1 - \probabilityGivenXK{\complement{\aKOne}} - \probabilityGivenXK{\complement{\aKTwo}} \geq 1- \deltaSTwo - \deltaSThree$. A similar argument shows that $\probability{{\aZeroOne}\intersect{\aZeroTwo}} \geq 1 - \deltaSTwo - \deltaSThree$, as $x_0$ is fixed. 

Moreover, given $x_k = \barXK$, $\aKOne$ and $\aKTwo$ only depend on $S_k$, which is drawn randomly at iteration $k$. Hence given $x_k = \barXK$, ${\aKOne}\intersect{\aKTwo}$ is independent of whether the previous iterations are true or not. Hence \autoref{AA2} is true.
\end{proof}

\paragraph{Satisfying \autoref{AA3} (page \pageref{AA3})}

\begin{lemma}\label{lem:cubic:sub:A2}
Let $f$ be twice continuously differentiable with $\LH$-Lipshitz continuous Hessian $\grad^2 f$. \autoref{alg:R-ARC} satisfies \autoref{AA3} with
    \begin{equation}
        \alphaLow = \frac{2(1-\theta)}{\LH} \label{eq:alphaLow:CBGN}
    \end{equation}
\end{lemma}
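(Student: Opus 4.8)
The plan is to show that the sufficient-decrease test \eqref{eq:cubic:sufficient_decrease}, which defines a successful iteration, is automatically passed once $\alphaK \leq \alphaLow = \frac{2(1-\theta)}{\LH}$. A useful thing to notice up front is that neither the truth of iteration $k$ nor the hypothesis $k < \nEps$ will actually be used: this is a purely deterministic property of the cubic model, so I expect the embedding/criticality hypotheses of \autoref{AA3} to be irrelevant here (they enter later, when verifying \autoref{AA4}). Only the model-minimization requirement \eqref{tmp:CBGN:3} and the global Lipschitz property of $\grad^2 f$ are needed.

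First I would invoke Taylor's theorem with the $\LH$-Lipschitz Hessian to bound the true objective at the trial point. Writing $\sK = \SKTransposedsKHat$, expanding $f$ at $\xK$ along $\sK$ gives
\[
\fKPlusOne \leq \fK + \innerProduct{\gradFK}{\sK} + \tfrac12 \innerProduct{\sK}{\hessFK \sK} + \frac{\LH}{6}\normTwo{\sK}^3 .
\]
The crucial observation is that, because $\sK = \SKTransposedsKHat$, the quadratic part collapses onto the reduced model: $\innerProduct{\gradFK}{\SKTransposedsKHat} = \innerProduct{\SK\gradFK}{\sKHat}$ and $\innerProduct{\SKTransposedsKHat}{\hessFK \SKTransposedsKHat} = \innerProduct{\sKHat}{\SK \hessFK \SKTransposed \sKHat}$, so the first three terms equal exactly $\qKHat{\sKHat}$, the second-order Taylor series appearing in \eqref{eqn::mKHatSpec_CB}. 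Since $\qKHat{0} = \fK$, this rearranges to
\[
\fK - \fKPlusOne \geq \squareBracket{\qKHat{0} - \qKHat{\sKHat}} - \frac{\LH}{6}\normTwo{\SKTransposedsKHat}^3 .
\]

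Next I would lower-bound the model decrease using \eqref{tmp:CBGN:3}. From \eqref{eqn::mKHatSpec_CB} we have $\mKHat{0} = \qKHat{0} = \fK$ and $\mKHat{\sKHat} = \qKHat{\sKHat} + \frac{1}{3\alphaK}\normTwo{\SKTransposedsKHat}^3$, so the requirement $\mKHat{\sKHat} \leq \mKHat{0}$ gives
\[
\qKHat{0} - \qKHat{\sKHat} \geq \frac{1}{3\alphaK}\normTwo{\SKTransposedsKHat}^3 \geq 0 .
\]

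Finally I would assemble the pieces. Comparing the target \eqref{eq:cubic:sufficient_decrease} with the decrease bound above, it suffices to establish $(1-\theta)\squareBracket{\qKHat{0} - \qKHat{\sKHat}} \geq \frac{\LH}{6}\normTwo{\SKTransposedsKHat}^3$; substituting the model-decrease lower bound, this in turn follows whenever $\frac{1-\theta}{3\alphaK} \geq \frac{\LH}{6}$, i.e. $\alphaK \leq \frac{2(1-\theta)}{\LH} = \alphaLow$, which is precisely the hypothesis. I do not anticipate a genuine obstacle; the only point requiring care is the algebraic identity that restricting the full second-order Taylor model to the subspace direction $\SKTransposedsKHat$ reproduces the reduced quadratic $\qKHat{\cdot}$, which is exactly what makes the cubic Taylor-remainder $\frac{\LH}{6}\normTwo{\SKTransposedsKHat}^3$ commensurate with the regularization term $\frac{1}{3\alphaK}\normTwo{\SKTransposedsKHat}^3$ supplied by the model.
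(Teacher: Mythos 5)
Your proof is correct and follows essentially the same route as the paper's: both combine the Taylor-remainder bound $\abs{f(x_k+s_k)-\hat q_k(\hat s_k)}\le \frac{\LH}{6}\normTwo{s_k}^3$ (a consequence of the $\LH$-Lipschitz Hessian) with the model-decrease bound $\hat q_k(0)-\hat q_k(\hat s_k)\ge\frac{1}{3\alpha_k}\normTwo{S_k^T\hat s_k}^3$ from \eqref{tmp:CBGN:3}, and conclude from $\alpha_k \LH/2\le 1-\theta$. The only real difference is presentational: the paper runs the argument through the ratio $\rho_k$ in \eqref{eqn:rho_k:CBGN} and therefore invokes true iterations and $k<\nEps$ (via \autoref{stepSizeSubEmbed}) to ensure the denominator is nonzero, whereas your subtraction-based version never divides and so, as you correctly observe, does not need those hypotheses.
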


\begin{proof}
From \eqref{tmp:CBGN:3}, we have that 
    \begin{equation}
        f(x_k) - \qKHat{\sKHat} \geq \frac{1}{3\alphaK} 
        \normTwo{\SK^T \sKHat}^3.
        \notag
    \end{equation}
Using \autoref{stepSizeSubEmbed}, in true iterations with $k < \nEps$, we have that 
$
    \normTwo{\SK^T \sKHat} >0.\notag
$
Then, since $f(x_k)=\qKHat{0}$, $\rho_k$ below\footnote{Note that the right-hand side of \eqref{eq:cubic:sufficient_decrease}, which is
 the denominator of \eqref{eqn:rho_k:CBGN}, may be zero before termination, on account of sketching/subspace techniques being used.},
    \begin{equation}
        \rho_k := \frac{f(x_k) - f(x_k + s_k)}{f(x_k) - \qKHat{\sKHat}},
        \label{eqn:rho_k:CBGN}
    \end{equation}
 is well defined,   and
    \begin{equation}
        \abs{1 - \rho_k} = \frac{\abs{f(x_k + s_k) - \qKHat{\sKHat}}}
        {\abs{f(x_k) - \qKHat{\sKHat}}}. \notag
    \end{equation}
The numerator can be bounded by 
    \begin{align}
        \abs{f(x_k + s_k) - \qKHat{\sKHat}}  \leq \frac{1}{6}\LH \normTwo{\sK}^2,\notag
    \end{align}
    by Corollary A.8.4 in \cite{cartis_evaluation_2022}.

Therefore, we have 
\begin{equation}
    \abs{1 - \rho_k} \leq \frac{\frac{1}{6}\LH \normTwo{\sK}^3}{\frac{1}{3\alphaK}
    \normTwo{\sK}^3} = \frac{1}{2}\alphaK \LH \leq 1-\theta 
    \texteq{by \eqref{eq:alphaLow:CBGN} and $\alphaK \leq \alphaLow$}. 
\end{equation}

Thus $1-\rho_k \leq \abs{1-\rho_k} \leq 1-\theta$ so $\rho_k \geq \theta $ and iteration $k$ is successful.
\end{proof}

\paragraph{Satisfying \autoref{AA4} (page \pageref{AA4})}

\begin{lemma}
    Let $f$ be twice continuously differentiable with $\LH$-Lipschitz continuous Hessian. 
    \autoref{alg:R-ARC} with true iterations defined in \autoref{def:true:CBGN} satisfies \autoref{AA4} with
    
    \begin{equation}
        h(\epsilon, \alphaK) = \frac{\theta}{3\alphaMax} 
        \bracket{\frac{\epsilon}{2}}^{3/2} \min \set{ \frac{2^{3/2}}{\LH^{3/2}}, \bracket{\frac{\sqrt{\oneMinusEpSTwo}}{\frac{1}{\alphaK}\sMax +\kappaT}}^{3/2}}.
        \label{eq:hEpsAlphaCB}
    \end{equation}
    
\end{lemma}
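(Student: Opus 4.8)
The plan is to chain \autoref{succStepDecrease} (the objective decrease at a successful iteration) with \autoref{stepSizeSubEmbed} (the lower bound on the sketched step length at a true iteration before convergence), and then to reshape the resulting estimate into the stated form of $h$. Throughout, the hypotheses of \autoref{AA4} supply exactly what each of these two lemmas needs: ``successful'' feeds \autoref{succStepDecrease}, while ``true'' together with $k<\nEps$ feeds \autoref{stepSizeSubEmbed}.

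First I would invoke \autoref{succStepDecrease}: since iteration $k$ is successful, $\fK - \fKPlusOne \geq \frac{\theta}{3\alphaK}\normTwo{S_k^T \sKHat}^3$. Because iteration $k$ is also true and $k<\nEps$, I would then apply \autoref{stepSizeSubEmbed} to lower bound $\normTwo{S_k^T \sKHat}^2$ via \eqref{Lemma5.2.2_eqn} and raise both sides to the power $3/2$. The only algebraic point is that $t\mapsto t^{3/2}$ is increasing on $[0,\infty)$, so it commutes with the minimum, i.e. $\bracket{\minMe{a}{b}}^{3/2} = \minMe{a^{3/2}}{b^{3/2}}$ for $a,b\geq 0$, and it pulls the scalar $\frac{\epsilon}{2}$ out as $\bracket{\frac{\epsilon}{2}}^{3/2}$. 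This converts the two terms inside the minimum of \eqref{Lemma5.2.2_eqn} into exactly $\frac{2^{3/2}}{\LH^{3/2}}$ and $\bracket{\frac{\sqrt{\oneMinusEpSTwo}}{\frac{1}{\alphaK}\sMax+\kappaT}}^{3/2}$, the two terms appearing in \eqref{eq:hEpsAlphaCB}.

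Combining the two estimates yields the claimed decrease, but with prefactor $\frac{\theta}{3\alphaK}$ rather than $\frac{\theta}{3\alphaMax}$. To obtain the stated $h$ I would invoke the algorithmic invariant $\alphaK \leq \alphaMax$ (which follows from the update $\alphaKPlusOne = \minMe{\alphaMax}{\gammaTwo\alphaK}$ together with $\alphaZero \leq \alphaMax$), so that $\frac{1}{\alphaK}\geq \frac{1}{\alphaMax}$ and replacing the prefactor by $\frac{\theta}{3\alphaMax}$ only weakens the bound. This gives $\fK - \fKPlusOne \geq h(\epsilon,\alphaK)$ with $h$ as in \eqref{eq:hEpsAlphaCB}.

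Finally I would check the structural requirements of \autoref{AA4}: that $h$ is non-negative, strictly positive when both arguments are positive, and non-decreasing in each argument. Positivity is immediate since $\theta,\alphaMax,\sMax,\kappaT$ are positive and $\epSTwo\in(0,1)$ forces $\oneMinusEpSTwo>0$, so the minimand is strictly positive; monotonicity in $\epsilon$ is clear from the factor $\bracket{\frac{\epsilon}{2}}^{3/2}$. I expect the one genuinely thought-requiring point---and the reason the prefactor is written with $\alphaMax$ rather than $\alphaK$---to be monotonicity in $\alphaK$: had we kept $\frac{1}{\alphaK}$ in front it would decrease in $\alphaK$ and compete with the min-term, whereas with the constant prefactor the only $\alphaK$-dependence enters through $\bracket{\frac{\sqrt{\oneMinusEpSTwo}}{\frac{1}{\alphaK}\sMax+\kappaT}}^{3/2}$, which is manifestly non-decreasing in $\alphaK$. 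The core inequality itself is a routine composition of the two preceding lemmas.
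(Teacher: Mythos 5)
Your proposal is correct and follows exactly the paper's (one-line) proof: combine \autoref{succStepDecrease} with \autoref{stepSizeSubEmbed} and use $\alphaK \leq \alphaMax$ to replace the prefactor. Your additional remarks on why $\alphaMax$ appears in the prefactor (to preserve monotonicity of $h$ in $\alphaK$, as \autoref{AA4} requires) are a correct and worthwhile elaboration that the paper leaves implicit.
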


\begin{proof}
For true and successful iterations with $k < \nEps$, use \autoref{stepSizeSubEmbed} with \autoref{succStepDecrease} and $\alphaK \leq \alphaMax$.
\end{proof}

\paragraph{Satisfying \autoref{AA5} (page \pageref{AA5})}
The next lemma shows that the function value following \autoref{alg:R-ARC} is non-increasing.

\begin{lemma}\label{tmp-2022-1-15-1}
\autoref{alg:R-ARC} satisfies \autoref{AA5}.
\end{lemma}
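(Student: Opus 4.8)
The plan is to dispose of \autoref{AA5} by a case split on the two branches of Step 3 of \autoref{alg:R-ARC}. On an unsuccessful iteration the iterate is not moved, $\xKPlusOne = \xK$, so $f(\xKPlusOne) = \fK$ and \eqref{eqn::fKNonIncreasing} holds with equality; nothing further is needed. The entire content therefore lies in the successful branch, where $\xKPlusOne = \xK + \sK$ and the acceptance test \eqref{eq:cubic:sufficient_decrease} is in force, namely
\[
    \fK - \fKPlusOne \geq \theta \squareBracket{\qKHat{0} - \qKHat{\sKHat}}.
\]
Since $\theta \in (0,1)$, to conclude $f(\xK) \geq f(\xKPlusOne)$ it suffices to show that the surrogate decrease $\qKHat{0} - \qKHat{\sKHat}$ is non-negative.

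To establish this I would appeal to the model-reduction condition \eqref{tmp:CBGN:3}, $\mKHat{\sKHat} \leq \mKHat{0}$, together with the decomposition \eqref{eqn::mKHatSpec_CB}, $\mKHat{\sHat} = \qKHat{\sHat} + \frac{1}{3\alphaK}\normTwo{\SKTransposed \sHat}^3$. Evaluating the latter at $\sHat = 0$ makes the cubic term vanish, so $\mKHat{0} = \qKHat{0}$, and \eqref{tmp:CBGN:3} rearranges to
\[
    \qKHat{0} - \qKHat{\sKHat} \geq \frac{1}{3\alphaK}\normTwo{\SKTransposedsKHat}^3 \geq 0,
\]
the last inequality using $\alphaK > 0$. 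Substituting into the displayed sufficient-decrease inequality gives $\fK - \fKPlusOne \geq 0$, i.e.\ \eqref{eqn::fKNonIncreasing} on successful iterations as well.

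I do not anticipate any genuine obstacle: the statement is essentially a bookkeeping consequence of the accept/reject rule, and the only quantitative input is that the cubically regularized model decrease is non-negative by \eqref{tmp:CBGN:3}. In fact this computation has already been carried out inside the proof of \autoref{succStepDecrease}, whose bound \eqref{tmp:CBGN:4} directly yields $f(\xKPlusOne) \leq \fK$ on successful iterations; one could simply cite that lemma for the successful branch and combine it with the trivial unsuccessful branch to finish.
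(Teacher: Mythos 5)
Your proof is correct and follows essentially the same route as the paper: a case split on successful versus unsuccessful iterations, with the successful branch handled by the computation already contained in \autoref{succStepDecrease} (which the paper simply cites, as you note one could). No issues.
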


\begin{proof}
In \autoref{alg:R-ARC}, we either have $x_{k+1} = x_k$ when the step is unsuccessful, 
in which case $f(x_k) = f(x_{k+1})$; or the step is successful,
in which case we have $\fun{f}{x_{k+1}} - \fun{f}{x_k} \leq 0$ 
by \autoref{succStepDecrease}.
\end{proof}

\subsection{Iteration complexity of \autoref{alg:R-ARC} for first-order criticality}
We have shown that \autoref{alg:R-ARC} satisfies \autoref{AA2}, \autoref{AA3}, \autoref{AA4} and \autoref{AA5}. Noting that \autoref{alg:R-ARC} is a particular case of \autoref{alg:generic}, we apply \autoref{thm2} to arrive at the main result of this section.

\begin{theorem} \label{thm:CBGN_subspace_first}
    Let $\cal{S}$ be the distribution of scaled Gaussian matrices $S \in \R^{l \times d}$ as in \autoref{def:Gaussian}. Suppose that $f$ is bounded below by $f^*$, is twice continuously differentiable with Lipschitz-continuous Hessian $\grad^2 f$ (with Lipschitz constant $\LH$), and that $\hessFK$ has rank at most $r\leq d$ for all $k$ and let $\epsilon>0$.
    Choose $l = 4 C_l (\log16 + r + 1); 
    \epSTwo=\frac{1}{2}; 
    \deltaSTwo = \frac{1}{16};
    $ so that
    $\deltaSThree = \deltaSThreeExpression = \frac{1}{16};
    \deltaS = \frac{1}{8}<\frac{c}{(c+1)^2}; 
    \sMax = 1 + 
    \frac{ \sqrt{d} + \sqrt{2\log16} }
    {\sqrt{4 C_l \bracket{ \log 16 + r + 1 } }}$, where $C_l$ is defined in \eqref{deltaSThree:eq}. Apply \autoref{alg:R-ARC}  to minimizing $f$ for $N$ iterations. 
	Then for any $\delta_1 \in (0,1)$ with 
    \begin{equation}
        g(\deltaOne) >0, \nonumber
    \end{equation}
    where 
    \begin{equation}
        g(\deltaOne) = \nPreFactorTRCubic, \nonumber
    \end{equation}
    if $N\in \N$ satisfies 
    \begin{equation}
        N \geq g(\deltaOne) \squareBracket{
             \fZeroMinusfStarOverH
             + \frac{4 C_l (\log16 + r + 1)}{1+c}}, \nonumber
    \end{equation}
    where $h(\epsilon, \alpha_k)\sim \epsilon^{3/2}$ is defined in \eqref{eq:hEpsAlphaCB} with $\epSTwo, \sMax$ defined in the theorem statement, $\alphaLow$ is given in \eqref{eq:alphaLow:CBGN} and
     $\alphaMin = \alphaZero \gammaOne^\newL$ associated with $\alphaLow$,  for some $\newL \in \N^+$,
     we have that
    \begin{equation}
        \probability{\min_{k\leq N} \{\normTwo{\grad f(x_{k+1})}\} \leq \epsilon } \geq 1 - e^{-\frac{7\delta_1^2}{16} N}. \nonumber
    \end{equation}
\end{theorem}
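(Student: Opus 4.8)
The plan is to recognize this theorem as a direct specialization of the generic complexity result \autoref{thm2} to \autoref{alg:R-ARC} with scaled Gaussian sketching, so that essentially all of the analytic content has already been discharged by the lemmas of \autoref{sec:rarc-convergence}. Concretely, \autoref{lem:arbitatratyDeltaS} supplies \autoref{AA2} with $\deltaS = \deltaSTwo + \deltaSThree$ and the stated $\sMax$; \autoref{lem:cubic:sub:A2} supplies \autoref{AA3} with $\alphaLow$ as in \eqref{eq:alphaLow:CBGN}; the lemma that establishes \autoref{AA4} supplies the decrease function $h$ in \eqref{eq:hEpsAlphaCB}; and \autoref{tmp-2022-1-15-1} supplies \autoref{AA5}. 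Hence all four hypotheses of \autoref{thm2} hold, and it remains only to compute the constants induced by the specific parameter choices and to read off the conclusion.

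First I would verify the numerical values of the probabilistic constants. Substituting $l = 4 C_l(\log 16 + r + 1)$ and $\epSTwo = \tfrac12$ into \eqref{deltaSThree:eq} gives $\frac{l(\epSTwo)^2}{C_l} = \log 16 + r + 1$, so the exponent collapses to $-\log 16$ and $\deltaSThree = \tfrac{1}{16}$. With $\deltaSTwo = \tfrac{1}{16}$ this yields $\deltaS = \deltaSTwo + \deltaSThree = \tfrac18$, and the formula for $\sMax$ from \autoref{lem:GaussSMax:CubicSubspace} with $\deltaSTwo = \tfrac{1}{16}$ reduces to the displayed $\sMax$. I would then check the standing hypothesis \eqref{eqn::deltaSConditionThmTwo} of \autoref{thm2}, namely $\deltaS < c/(c+1)^2$: since $\deltaS = \tfrac18$, this holds precisely when the constant $c$ chosen at initialization satisfies $c/(c+1)^2 > \tfrac18$, consistent with the claimed inequality.

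With these values in hand, I would apply \autoref{thm2} essentially verbatim. Setting $\deltaS = \tfrac18$ in $g(\deltaS, \deltaOne) = \nPreFactorTR$ produces $g(\deltaOne) = \nPreFactorTRCubic$, and the positivity requirement \eqref{eqn:tmp32} becomes $g(\deltaOne) > 0$; the iteration bound \eqref{eqn::n_upper_2} becomes the displayed bound once the generic additive term is expressed using the chosen subspace dimension $l = 4 C_l(\log 16 + r + 1)$. The success probability $1 - \chernoffLowerExponential$ with $1-\deltaS = \tfrac78$ becomes $1 - e^{-\frac{7\delta_1^2}{16}N}$. Finally, since the first-order measure here gives $\nEps = \min\{k : \normTwo{\grad f(x_{k+1})} \leq \epsilon\}$, the event $\{N \geq \nEps\}$ coincides with $\{\min_{k\leq N}\{\normTwo{\grad f(x_{k+1})}\} \leq \epsilon\}$, delivering the stated probability; the $\mathcal{O}(\epsilon^{-3/2})$ scaling then follows because $h(\epsilon, \alphaK) \sim \epsilon^{3/2}$ in \eqref{eq:hEpsAlphaCB}, so $1/h \sim \epsilon^{-3/2}$ dominates the iteration count.

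I do not expect a genuine obstacle: this is an assembly step, and the only non-cosmetic checks are the exact cancellation that forces $\deltaSThree = \tfrac{1}{16}$ (hence $\deltaS = \tfrac18 < \tfrac14$, the regime that the remark after \autoref{thm2} flags as necessary) and the compatibility of $c$ with $\deltaS = \tfrac18$. The one point requiring care is the bookkeeping between the quantity $\newL$ appearing in the generic bound of \autoref{thm2} and the subspace dimension $l$ appearing in the additive term of the final bound, which I would track explicitly rather than treat as interchangeable.
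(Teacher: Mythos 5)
Your proposal is correct and follows exactly the paper's own route: the paper proves this theorem by noting that Lemmas \ref{lem:arbitatratyDeltaS}, \ref{lem:cubic:sub:A2}, the \autoref{AA4} lemma with $h$ from \eqref{eq:hEpsAlphaCB}, and \autoref{tmp-2022-1-15-1} verify the four assumptions of \autoref{thm2}, and then applying \autoref{thm2} with the stated parameter choices. Your closing caution about the substitution of the subspace dimension $l = 4C_l(\log 16 + r + 1)$ for $\newL$ in the additive term of the iteration bound is well taken --- the paper itself does not spell out this step --- but it does not change the fact that your argument is the paper's argument.
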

Thus, with exponentially high probability, R-ARC takes $\mathcal{O}(\epsilon^{-3/2})$  iterations to drive the gradient below desired accuracy $\epsilon$. However, since the subspace dimension $l$ is fixed and proportional to the rank of the Hessian matrix $r$, the Hessian matrix $\grad^2 f$ needs  to have a lower rank $r$ than the full space dimension $d$, as otherwise \autoref{alg:R-ARC} does not save computational effort/gradient/Hessian evaluations (per iteration) compared to the full-dimensional version.

\paragraph{Dependency on problem dimension} The number of iterations required by \autoref{alg:R-ARC} for \autoref{thm:CBGN_subspace_first} depends on the dimension $d$, even though the required sketch dimension size $l$ only depends on upon the bound $r$ of the Hessian's rank. This dependency arises through \autoref{eq:hEpsAlphaCB}, where we have that $h(\epsilon, \alphaK) \propto \sMax^{-3/2}$. Hence we have the requirement in \ref{thm:CBGN_subspace_first} that $N \propto \sMax^{3/2}$. For Gaussian matrices, we have that $\sMax \propto \sqrt{\frac{d}{l}}$ and hence $N \propto \left( \frac{d}{l} \right)^{3/4}$.

\paragraph{Using other random ensembles than the scaled Gaussian matrices in \autoref{alg:R-ARC}}
Although \autoref{thm:CBGN_subspace_first} requires $\cal{S}$ to be the distribution of scaled Gaussian matrices, qualitatively similar results, namely, convergence  rates of order $\mathO{\epsilon^{-3/2}}$ with exponentially high probability, can be established for example, for $s$-hashing matrices with $s=\mathcal{O}(\log d)$  nonzeros per column, Subsampled Randomized Hadamard Transforms, Hashed Randomized Hadamard Transforms, Haar matrices and more; see \autoref{sec:sketching_matrices} for definitions of these ensembles. The proof for satisfying \autoref{AA2} needs to be modified, using the upper bounds for $S_{max}$ and the subspace embedding properties of these ensembles instead. Consequently, the constants in \autoref{thm:CBGN_subspace_first} will change, but the convergence rate and the form of the result stays the same; see Section 4.4.2 in \cite{Zhen-PhD} for more details.
We note though that scaled sampling matrices require much stronger problem assumptions than the rest of the ensembles and so are not to be preferred in general \cite{cartis_randomised_2022, Zhen-PhD}.

\subsection{R-ARC applied to low-rank functions}
\label{sec:low_rank}

\autoref{thm:CBGN_subspace_first} requires that the rank of $\hessFK$ is bounded by $r$ for all iterates $\xK$; this trivially holds for $r = d$ where $d$ is the problem dimension, but raises the questions for what classes of functions does this assumption hold with $r \ll d$. In these cases, a significant dimensionality reduction can be achieved by R-ARC whilst still attaining the optimal $\mathcal{O}(\epsilon^{-\frac{3}{2}})$ convergence rate to reach an $\epsilon$-approximate first-order minimizer.

We return to low-rank functions as defined in \autoref{def:low:rank}. This is a broad class of functions that necessarily have a bound on the rank of $\hessFK$, meaning that \autoref{thm:CBGN_subspace_first} provides a significant dimensionality reduction. An alternative and equivalent characterization  is now given.

\begin{lemma}[\cite{cartis_learning_2024, cosson_gradient_2022}]\label{lem:low:rank:characterisation}
    A function $f:\R^{d} \rightarrow \R$ has rank $r$, with $r \leq d$ if and only if there exists a matrix $A \in \R^{r \times d}$ and a map $\sigma:\R^{r} \rightarrow \R$ such that $f(x) = \sigma(Ax)$ for all $x \in \R^{d}$.
\end{lemma}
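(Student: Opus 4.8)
The statement is an equivalence, so the plan is to prove the two implications separately: the forward direction by building an explicit representation $f = \sigma \circ A$ out of the effective subspace, and the backward direction by exhibiting the invariant subspace directly from $\ker A$.

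For the forward direction, suppose $f$ has rank $r$ in the sense of \autoref{def:low:rank}, with effective subspace $\mathcal{T}$ of dimension $r$. First I would fix an orthonormal basis $v_1,\dots,v_r$ of $\mathcal{T}$ and let $A \in \R^{r\times d}$ be the matrix whose $i$-th row is $v_i^T$, so that $P := A^T A$ is the orthogonal projector onto $\mathcal{T}$. Then I would define $\sigma:\R^r \to \R$ by $\sigma(y) = f(A^T y)$; this is well defined since $A^T y \in \mathcal{T}$ for every $y \in \R^r$. Writing any $x \in \R^d$ as $x = x_\top + x_\perp$ with $x_\top = Px \in \mathcal{T}$ and $x_\perp = (I-P)x \in \mathcal{T}^\perp$, the invariance $f(x_\top + x_\perp) = f(x_\top)$ supplied by \autoref{def:low:rank} yields $f(x) = f(Px) = f(A^T A x) = \sigma(Ax)$, which is the desired representation.

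For the backward direction, suppose $f(x) = \sigma(Ax)$ for some $A \in \R^{r\times d}$ and $\sigma:\R^r \to \R$. I would take $\mathcal{T}$ to be the row space of $A$, equivalently $(\ker A)^\perp$, whose dimension equals $\mathrm{rank}(A) \le r$. For $x_\top \in \mathcal{T}$ and $x_\perp \in \mathcal{T}^\perp = \ker A$ we have $Ax_\perp = 0$, hence $f(x_\top + x_\perp) = \sigma(Ax_\top + Ax_\perp) = \sigma(Ax_\top) = f(x_\top)$. Thus $f$ is constant along $\mathcal{T}^\perp$, so its effective dimension is at most $\dim\mathcal{T} \le r$.

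The one point genuinely needing care — and the main obstacle to a perfectly clean \emph{exactly} rank-$r$ equivalence — is the minimality clause in \autoref{def:low:rank}. The forward construction automatically produces an $A$ with orthonormal (hence independent) rows, so $\mathrm{rank}(A)=r$; but in the backward direction a given $A \in \R^{r\times d}$ may be rank-deficient, delivering only rank $\le r$. I would reconcile the two sides by observing that the minimal $r$ of \autoref{def:low:rank} equals the minimal number of rows of a full-row-rank matrix representing $f$: if $\mathrm{rank}(A)=r'<r$, one may compress $A$ to an $r'\times d$ matrix of full row rank representing the same $f$ via the same $\sigma$ reparametrized on the row space, so the smallest admissible $r$ agrees on both sides of the equivalence.
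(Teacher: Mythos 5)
Your proof is correct. Note that the paper offers no proof of this lemma at all --- it is quoted from the cited references --- so there is nothing to compare against; your argument (an orthonormal basis of $\mathcal{T}$ assembled into $A$ so that $A^TA$ is the orthogonal projector for the forward direction, and $\ker A$ as the invariant complement for the backward direction) is the standard one. You are also right to flag that the backward implication, read literally, only yields rank at most $r$ (take $\sigma$ constant, or $\sigma$ depending on only some coordinates of $Ax$), and your resolution --- that the rank of $f$ coincides with the minimal $r$ admitting such a representation, by playing the two implications against each other --- is the intended reading of the statement.
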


Using this alternative characterization, we are able to show that low-rank functions  have a Hessian with that maximal rank.

\begin{lemma}\label{lem:low:rank:hessian}
   If $f:\R^{d} \rightarrow \R$ has rank $r$, and $f$ is $C^2$, then for all $x \in \R^{d}$, $\grad^{2} f(x)$ has rank at most $r$.
\end{lemma}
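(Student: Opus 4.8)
The plan is to invoke the characterization of \autoref{lem:low:rank:characterisation}, which lets us write $f(x) = \sigma(Ax)$ for some $A \in \R^{r \times d}$ and $\sigma : \R^r \to \R$, and then to differentiate twice by the chain rule. Formally, once $\sigma$ is known to be $C^2$, the chain rule gives $\grad f(x) = A^T \grad \sigma(Ax)$ and hence $\grad^2 f(x) = A^T \grad^2 \sigma(Ax)\, A$. Since $\grad^2 \sigma(Ax) \in \R^{r \times r}$, the matrix $A^T \grad^2 \sigma(Ax)\, A$ factors through $\R^r$, so its rank is at most $r$, which is exactly the desired conclusion.

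The main obstacle is that \autoref{lem:low:rank:characterisation} asserts only the existence of $\sigma$ as a function, not that $\sigma$ is twice differentiable, so the chain-rule step above is not immediately justified. I would resolve this by first arguing that $A$ has full row rank $r$: if $\mathrm{rank}(A) = r' < r$, a rank factorization $A = UV$ with $V \in \R^{r' \times d}$ of full row rank would give $f(x) = \sigma\bigl(U(Vx)\bigr) =: \tilde\sigma(Vx)$ with $\tilde\sigma : \R^{r'} \to \R$, which by the characterization would force the rank of $f$ to be at most $r' < r$, contradicting the minimality of $r$ in \autoref{def:low:rank}. With $A$ of full row rank, I would choose a right inverse $A^{+} \in \R^{d \times r}$ with $A A^{+} = I_r$ and observe that $\sigma(y) = \sigma(A A^{+} y) = f(A^{+} y)$; thus $\sigma$ is the composition of the $C^2$ function $f$ with a linear map, hence itself $C^2$, legitimizing the computation above.

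Alternatively, and perhaps more cleanly, I would bypass the smoothness question entirely by working directly from \autoref{def:low:rank}: since $f(x_\top + x_\perp) = f(x_\top)$, the function is constant along every direction $v \in \mathcal{T}^\perp$, i.e.\ $f(x + tv) = f(x)$ for all $x$ and $t$. Differentiating once in $t$ gives $\langle \grad f(x), v \rangle = 0$ for all $x$, and differentiating this identity in an arbitrary direction $w$ yields $w^T \grad^2 f(x) v = 0$ (using symmetry of the Hessian), so $\grad^2 f(x) v = 0$ for every $v \in \mathcal{T}^\perp$. Hence $\mathcal{T}^\perp \subseteq \ker \grad^2 f(x)$, and since $\dim \mathcal{T}^\perp = d - r$, the rank-nullity theorem gives $\mathrm{rank}(\grad^2 f(x)) \le r$. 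I expect the chain-rule route to be the intended one given the placement of \autoref{lem:low:rank:characterisation}, with the verification that $A$ has full row rank being the only genuinely careful step.
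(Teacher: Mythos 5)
Your primary route is essentially the paper's own proof: invoke \autoref{lem:low:rank:characterisation} to write $f(x)=\sigma(Ax)$ with $A\in\R^{r\times d}$, apply the chain rule twice to get $\grad^2 f(x)=A^{\top}[\grad^2\sigma(Ax)]A$, and conclude $\mathrm{rank}(\grad^2 f(x))\leq\mathrm{rank}(A)\leq r$. Where you go beyond the paper is in noticing that the characterization lemma only provides $\sigma$ as a map, with no smoothness guarantee, so the chain-rule differentiation needs justification; the paper's proof simply differentiates $\sigma$ without comment. Your patch is correct: minimality of $r$ forces $A$ to have full row rank (otherwise a rank factorization would exhibit $f$ as factoring through a lower-dimensional space), so a right inverse $A^{+}$ with $AA^{+}=I_r$ exists and $\sigma(y)=f(A^{+}y)$ is $C^2$ as a composition of a $C^2$ function with a linear map. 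Your alternative argument is a genuinely different and arguably cleaner route that bypasses $\sigma$ entirely: from $f(x+tv)=f(x)$ for $v\in\mathcal{T}^{\perp}$ one gets $\langle\grad f(x),v\rangle=0$ identically in $x$, and differentiating again gives $\grad^2 f(x)v=0$, so $\mathcal{T}^{\perp}\subseteq\ker\grad^2 f(x)$ and rank--nullity yields the bound. That version uses only \autoref{def:low:rank} and the $C^2$ hypothesis on $f$, avoiding the smoothness-of-$\sigma$ issue altogether, at the cost of not exhibiting the explicit factorization $A^{\top}[\grad^2\sigma(Ax)]A$ that the paper's presentation makes visible. Both of your arguments are sound.
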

\begin{proof}
Using \autoref{lem:low:rank:characterisation}, we can write
\begin{equation}
    f(x) = \sigma(Ax)
\end{equation}
where $A \in \mathbb{R}^{r \times d}$ and is hence of rank $<= r$. We have
\begin{equation}
    \grad f(x) =
    \frac{\partial \sigma(Ax)}{\partial x} =
    A^{\top} \cdot \frac{\partial \sigma(Ax)}{\partial (Ax)} =
    A^{\top} \grad \sigma(Ax)
\end{equation}
Furthermore, as $f$ is $C^2$ by assumption, we have

\begin{equation}
    \grad^{2} f(x) =
    \frac{\partial \grad f(x)}{\partial x^T} =
    A^{\top} \cdot  \frac{\partial \grad\sigma(Ax)}{\partial x^T} =
    A^{\top} \cdot \frac{\partial \grad\sigma(Ax)}{\partial (Ax)^T} \cdot A = A^{\top}[\grad^{2}\sigma(Ax)]A.
\end{equation}
As $\rank(A) \leq r$, we can conclude that $\grad^{2} f(x)$ has rank bounded by $r$.
\end{proof}

Using \autoref{lem:low:rank:hessian}, we show the following optimal convergence result for low-rank functions.

\begin{corollary}\label{cor:low_rank_first_convergence}
    Let $\cal{S}$ be the distribution of scaled Gaussian matrices $S \in \R^{l \times d}$ as in \autoref{def:Gaussian}. Suppose that $f$ is a low-rank function of rank $r$, bounded below by $f^*$, and twice continuously differentiable with Lipschitz continuous Hessian $\grad^2 f$ (with constant $\LH$) and let $\epsilon>0$.
    Choose $l = 4 C_l (\log16 + r + 1); 
    \epSTwo=\frac{1}{2}; 
    \deltaSTwo = \frac{1}{16};
    $ so that
    $\deltaSThree = \deltaSThreeExpression = \frac{1}{16};
    \deltaS = \frac{1}{8}<\frac{c}{(c+1)^2}; 
    \sMax = 1 + 
    \frac{ \sqrt{d} + \sqrt{2\log16} }
    {\sqrt{4 C_l \bracket{ \log 16 + r + 1 } }}$, where $C_l$ is defined in \eqref{deltaSThree:eq}. Apply \autoref{alg:R-ARC} to minimizing $f$ for $N$ iterations. 
	Then for any $\delta_1 \in (0,1)$ with 
    \begin{equation}
        g(\deltaOne) >0, \nonumber
    \end{equation}
    where 
    \begin{equation}
        g(\deltaOne) = \nPreFactorTRCubic, \nonumber
    \end{equation}
    if $N\in \N$ satisfies 
    \begin{equation}
        N \geq g(\deltaOne) \squareBracket{
             \fZeroMinusfStarOverH
             + \frac{4 C_l (\log16 + r + 1)}{1+c}}, \nonumber
    \end{equation}
    where $h(\epsilon, \alpha_k)\sim \epsilon^{3/2}$ is defined in \eqref{eq:hEpsAlphaCB} with $\epSTwo, \sMax$ defined in the theorem statement, $\alphaLow$ is given in \eqref{eq:alphaLow:CBGN} and
     $\alphaMin = \alphaZero \gammaOne^\newL$ associated with $\alphaLow$,  for some $\newL \in \N^+$,
     we have that
    \begin{equation}
        \probability{\min_{k\leq N} \{\normTwo{\grad f(x_{k+1})}\} \leq \epsilon } \geq 1 - e^{-\frac{7\delta_1^2}{16} N}. \nonumber
    \end{equation}
\end{corollary}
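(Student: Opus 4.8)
The plan is to deduce this corollary directly from \autoref{thm:CBGN_subspace_first}, since the two statements are identical in their parameter choices ($l = 4 C_l(\log 16 + r + 1)$, $\epSTwo = \tfrac12$, $\deltaSTwo = \tfrac{1}{16}$, etc.), in the form of the complexity bound on $N$, and in the high-probability conclusion on $\min_{k\leq N}\normTwo{\grad f(x_{k+1})}$. The \emph{only} difference in the hypotheses is that \autoref{thm:CBGN_subspace_first} assumes directly that $\hessFK$ has rank at most $r$ for all $k$, whereas here we instead assume that $f$ is a low-rank function of rank $r$ in the sense of \autoref{def:low:rank}. So the entire task reduces to showing that the low-rank-function hypothesis implies the uniform rank bound on the Hessian required by the theorem.

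The key step is to invoke \autoref{lem:low:rank:hessian}: since $f$ is assumed to be $C^2$ (twice continuously differentiable, with $\LH$-Lipschitz Hessian) and of rank $r$, that lemma yields that $\grad^2 f(x)$ has rank at most $r$ for \emph{every} $x \in \R^d$, and in particular at each iterate $x_k$ generated by \autoref{alg:R-ARC}. This supplies precisely the hypothesis ``$\hessFK$ has rank at most $r \leq d$ for all $k$'' needed to apply \autoref{thm:CBGN_subspace_first}. The remaining hypotheses of the theorem, namely that $f$ is bounded below by $f^*$ and twice continuously differentiable with $\LH$-Lipschitz continuous Hessian, are assumed verbatim in the statement of the corollary, so nothing further need be checked.

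With the rank bound in hand, I would simply apply \autoref{thm:CBGN_subspace_first} with this value of $r$: the stated choices of $l$, $\epSTwo$, $\deltaSTwo$, $\deltaSThree$, $\deltaS$, $\sMax$, the function $h$ from \eqref{eq:hEpsAlphaCB}, and $\alphaLow$ from \eqref{eq:alphaLow:CBGN} all transfer unchanged, and the conclusion follows. In short, this is a pure specialization rather than a genuinely new argument; the one substantive ingredient is the structural fact of \autoref{lem:low:rank:hessian} that turns the somewhat opaque ``bounded Hessian rank at every iterate'' requirement of the theorem into the natural and checkable property of being a low-rank function. I therefore do not anticipate any real obstacle: the only point worth stating carefully is that the rank bound from \autoref{lem:low:rank:hessian} holds at all $x$ (and hence along the entire, possibly random, iterate path $\{x_k\}$), so that it applies regardless of which iterates the algorithm actually visits.
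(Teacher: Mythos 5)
Your proposal is correct and matches the paper's proof exactly: the paper also deduces the corollary by invoking \autoref{lem:low:rank:hessian} to conclude that $\hessFK$ has rank at most $r$ for all $k$, and then applies \autoref{thm:CBGN_subspace_first}. Your additional remark that the rank bound holds at every $x$ (hence along any iterate path) is a sensible clarification but not a departure from the paper's argument.
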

\begin{proof}
    As $f$ is a low-rank function of rank $r$, by \autoref{lem:low:rank:hessian} we have that $\hessFK$ has rank at most $r$ for all $k$ and hence we can apply \autoref{thm:CBGN_subspace_first} to $f$.
\end{proof}

We have hence shown that low-rank functions are one such class of functions that enable \autoref{alg:R-ARC} to drive $\normTwo{\gradFK}$ below $
\epsilon$ in $\mathO{\epsilon^{-3/2}}$ iterations while allowing significant dimensionality reduction per iteration.

\section{R-ARC-D: a Dynamic R-ARC Variant}

Throughout the proof of \autoref{thm:CBGN_subspace_first}, the sketch dimension $l$ was fixed for the Gaussian sketching matrices. In \autoref{cor:low_rank_first_convergence}, it was therefore assumed that $l = \mathcal{O}(r)$ where $r\leq d$ was the rank of the function. It is difficult to ensure this condition holds unless $r$, or at least an upper bound for it, is known. In this section, we propose a R-ARC variant that chooses the sketch dimension adaptively, based on the  rank of the sketched local Hessian, which can be easily calculated. The following simple modification of R-ARC dispenses with the need to know $r$ a priori by increasing the sketch dimension across iterations.

\begin{algorithm}[H]
\begin{description}

\item[Initialization] \ \\
 Choose an initial sketch dimension $l_1$, a family of matrix distributions $\mathcal{S}_{l}$ of matrices $S\in \rLTimesD$ for $l \geq 1$.
 Choose constants $\gamma_1\in (0,1)$, $\gamma_2 >1$, $\theta \in (0,1)$, $\kappaT, \kappaS \geq 0$
 and $\alpha_{\max}>0$ such that $
        \gamma_2 = \oneOverGammaOneC,
        $
    for some $c\in \N^+$.
 Initialize the algorithm by setting 
 $x_0 \in \R^d$, 
 $\alpha_0 = \alphaMax \gamma^p$
 for some $p\in \N^+$ and $k=0$.

\item[1. Run an iteration of R-ARC] \ \\
    Perform steps 1-3 of \autoref{alg:R-ARC} with a sketch matrix $S_k\in \R^{l_k\times d}$.

\item[2. Increase dimension of sketch]\ \\  
    Choose $l_{k+1} \geq l_k$, let $k:=k+1$.

\caption{\bf{R-ARC with varying sketch dimension (R-ARC-D)}} \label{alg:R-ARC-D} 
\end{description}
\end{algorithm}

The advantage of \autoref{alg:R-ARC-D} is that we do not need to know the function rank $r$ beforehand. Assuming the function $f$ is of rank $r\leq d$, and we increase $l_k$ with each iteration, we will eventually reach the appropriate sketch dimension $l$ in \autoref{cor:low_rank_first_convergence}. We therefore have the following theorem, where the sketch dimension is increased by one on each iteration.

\begin{theorem}
\label{thm:low_rank_first_convergence_increase_sketch}
    Let $\mathcal{S}_{l}$ be the distribution of scaled Gaussian matrices $S \in \R^{l \times d}$ defined in \autoref{def:Gaussian}, $l\geq 1$. Suppose that $f$ is a low-rank function of rank $r\leq d$, bounded below by $f^*$, and twice continuously differentiable with Lipschitz-continuous Hessian $\grad^2 f$ (with constant $\LH$) and let $\epsilon>0$.
    Choose $\epSTwo=\frac{1}{2}; 
    \deltaSTwo = \frac{1}{16};
    $ so that
    $\deltaSThree = \deltaSThreeExpression = \frac{1}{16};
    \deltaS = \frac{1}{8}<\frac{c}{(c+1)^2}; 
    \sMax = 1 + 
    \frac{ \sqrt{d} + \sqrt{2\log16} }
    {\sqrt{4 C_l \bracket{ \log 16 + r + 1 } }}$, where $C_l$ is defined in \eqref{deltaSThree:eq}. Apply \autoref{alg:R-ARC-D} to minimizing $f$ for $N$ iterations with $l_k = k$ for all $k \geq 1$. 
	Then for any $\delta_1 \in (0,1)$ with 
    \begin{equation}
        g(\deltaOne) >0, \nonumber
    \end{equation}
    where 
    \begin{equation}
        g(\deltaOne) = \nPreFactorTRCubic, \nonumber
    \end{equation}
    if $N\in \N$ satisfies 
    \begin{equation}
        N \geq g(\deltaOne) \underbrace{\squareBracket{
             \fZeroMinusfStarOverH
             + \frac{4 C_l (\log16 + r + 1)}{1+c}}}_{N'} + 4 C_l (\log16 + r + 1) - 1, \nonumber
    \end{equation}
    where $h(\epsilon, \alpha_k)\sim \epsilon^{3/2}$ is defined in \eqref{eq:hEpsAlphaCB} with $\epSTwo, \sMax$ defined in the theorem statement, $\alphaLow$ is given in \eqref{eq:alphaLow:CBGN} and
     $\alphaMin = \alphaZero \gammaOne^\newL$ associated with $\alphaLow$,  for some $\newL \in \N^+$,
     we have that
    \begin{equation}
        \probability{\min_{k\leq N} \{\normTwo{\grad f(x_{k+1})}\} \leq \epsilon } \geq 1 - e^{-\frac{7\delta_1^2}{16} N'}. \nonumber
    \end{equation}
\end{theorem}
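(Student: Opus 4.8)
The plan is to reduce \autoref{thm:low_rank_first_convergence_increase_sketch} to the fixed-dimension result \autoref{thm:CBGN_subspace_first} by separating a short \emph{warm-up} phase, during which the sketch is too small to embed, from the \emph{main} phase, during which \autoref{alg:R-ARC-D} behaves like R-ARC with a sufficiently large (indeed growing) sketch. Set $l^\star := 4C_l(\log 16 + r + 1)$, the dimension prescribed in \autoref{cor:low_rank_first_convergence}. Since $l_k = k$, the iterations $k = 1,\dots,l^\star-1$ have $l_k < l^\star$ (the warm-up phase, of length $l^\star - 1$), whereas every iteration with $k \ge l^\star$ has $l_k = k \ge l^\star$.

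First I would show that each main-phase iteration is at least as good as a fixed-dimension iteration with $l = l^\star$. By \autoref{lem:Gauss_embedding} with $\epSTwo = \tfrac12$ the embedding-failure probability $\deltaSThree = e^{-l_k/(4C_l) + r + 1}$ is non-increasing in $l_k$, so $l_k \ge l^\star$ forces $\deltaSThree \le \tfrac{1}{16}$; together with $\deltaSTwo = \tfrac{1}{16}$ from \autoref{lem:GaussSMax:CubicSubspace}, \autoref{lem:arbitatratyDeltaS} gives \autoref{AA2} with $\deltaS = \tfrac18 < \tfrac{c}{(c+1)^2}$ (and the conditional independence of truth-values needed later) for these iterations. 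Likewise $\sMax = 1 + \sqrt{d/l_k} + \sqrt{2\log 16/l_k}$ is non-increasing in $l_k$, so the value at $l^\star$ stated in the theorem is a valid upper bound for all $k \ge l^\star$; since $h(\epsilon,\alphaK) \propto \sMax^{-3/2}$ in \eqref{eq:hEpsAlphaCB}, the true per-iteration decrease is then at least the $h$ evaluated at $l^\star$, so \autoref{AA4} holds with that $h$. \autoref{AA3} (\autoref{lem:cubic:sub:A2}) and \autoref{AA5} (\autoref{tmp-2022-1-15-1}) do not depend on $l$ and hold at every iteration, in particular throughout the warm-up.

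Next I would restart the clock at the end of the warm-up. Monotonicity of the objective (\autoref{AA5}, valid for all $k$) gives $f(\tilde x_0) \le f(x_0)$ for the iterate $\tilde x_0$ reached after the $l^\star - 1$ warm-up steps. The iterations $k \ge l^\star$ then form a bona fide R-ARC run to which \autoref{thm2} applies with $\deltaS = \tfrac18$, starting objective $f(\tilde x_0) \le f(x_0)$, and $N_{\mathrm{eff}} := N - (l^\star - 1)$ iterations. The hypothesis $N \ge g(\delta_1)N' + l^\star - 1$ yields $N_{\mathrm{eff}} \ge g(\delta_1)N'$, which (using $f(\tilde x_0) \le f(x_0)$ to bound the $\fZeroMinusfStarOverH$ term) is exactly the iteration count demanded by \autoref{thm:CBGN_subspace_first}; hence \autoref{thm2} certifies $\min_{k \le N}\normTwo{\grad f(x_{k+1})} \le \epsilon$ with probability at least $1 - e^{-\frac{7\delta_1^2}{16}N_{\mathrm{eff}}}$. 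Finally, the admissibility condition $g(\delta_1) > 0$ forces its reciprocal $\tfrac78(1-\delta_1) - 1 + \tfrac{c}{(c+1)^2}$ into $(0,\tfrac18]$, so $g(\delta_1) \ge 8 \ge 1$ and therefore $N_{\mathrm{eff}} \ge g(\delta_1)N' \ge N'$; this upgrades the bound to $1 - e^{-\frac{7\delta_1^2}{16}N'}$, as claimed.

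The step I expect to be the main obstacle is the legitimacy of the restart with respect to the regularization parameter. \autoref{thm2} is phrased for a process initialised at $\alpha_0 = \alphaMax\gammaOne^p$, whereas after the warm-up $\alpha$ may have been driven up (by successful warm-up steps) to any value in $(0,\alphaMax]$. I would therefore need to confirm that the deterministic counting argument behind \autoref{thm2} — which controls the number of unsuccessful iterations through the range of $\alphaK \in (0,\alphaMax]$ and the fixed threshold $\alphaMin = \alphaZero\gammaOne^\newL$, absorbing the slack into the additive $\tfrac{\cdot}{1+c}$ buffer — is insensitive to the precise restart value of $\alpha$, so that the tail run still meets the stated bound. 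The remaining ingredients (the Chernoff bound over the main phase alone, and the two monotonicity-in-$l$ claims) are routine.
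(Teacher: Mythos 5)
Your proposal is correct and follows essentially the same route as the paper's own (two-sentence) proof: a warm-up of $4C_l(\log 16 + r + 1) - 1$ iterations until $l_k$ reaches the sketch dimension required by \autoref{cor:low_rank_first_convergence}, followed by the observation that \autoref{lem:GaussSMax:CubicSubspace} and \autoref{lem:Gauss_embedding} continue to hold for larger $l$ with the stated $\sMax$ and $\deltaSThree$ (monotonicity in $l$), so that the fixed-dimension result applies to the remaining $N - (4C_l(\log 16 + r + 1) - 1)$ iterations. The restart subtlety you flag concerning the value of $\alpha_k$ inherited from the warm-up phase is a genuine gap, but the paper's proof does not address it either; your write-up, including the monotonicity of $f$ across the warm-up and the $g(\delta_1)\geq 1$ step that converts the exponent from $N_{\mathrm{eff}}$ to $N'$, is if anything more careful than the published argument.
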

\begin{proof}
    After the first $4 C_l (\log16 + r + 1) - 1$ iterations of algorithm \autoref{alg:R-ARC-D}, we have that $l = 4 C_l (\log16 + r + 1)$, meaning the conditions of \autoref{cor:low_rank_first_convergence} are met. For $l > 4 C_l (\log16 + r + 1)$, we have that \autoref{lem:GaussSMax:CubicSubspace} and \autoref{lem:Gauss_embedding} still hold for the particular $\sMax$ and $\deltaSThree$ as stated for $l = 4 C_l (\log16 + r + 1)$.
\end{proof}

The dependence of the required number of iterations $N$ on the dimension $d$ is at most proportional now to $\left ( \frac{d}{r} \right )^{3/4}$ as $l_k$ is at least proportional to $r$ asymptotically.

\subsection{An adaptive sketch size update rule for R-ARC-D}

In \autoref{thm:low_rank_first_convergence_increase_sketch}, we showed that if the sketch dimension is increased by $1$ on each iteration, we attain the optimal $\mathcal{O}(\epsilon^{-\frac{3}{2}})$ rate of convergence for low-rank functions or general ones. However, the sketch dimension update rule is not practical as the sketch dimension shall increase until it is equal to the dimension of the function domain $d$, which is not needed if the function has rank $r<d$. We now detail an update rule that can be used to update the dimension of the sketches $l_k$ in  such a way that the sketch sizes do not grow beyond what is strictly needed for a similar result to \autoref{thm:CBGN_subspace_first} to be applicable.

In \autoref{alg:R-ARC-D}, we define the following for $k \geq 0:$
\begin{equation*}
    r_k := rank(\hessFK);\quad \hat{r}_k := rank(\SK \hessFK \SK^{\top});\quad \hat{R}_k := \max_{0 \leq j \leq k}\hat{r}_j.
\end{equation*}
Using these, we can give the following update rule:
\begin{equation}\label{eq:l_k:update:step}
    l_{k+1} = \begin{cases}
        \max(C\hat{R}_{k} + D, l_{k})&\text{if $\hat{R}_k > \hat{R}_{k-1}$, or $k=0$} \\
        l_k &\text{otherwise.}
    \end{cases}    
\end{equation}
where $C, D \geq 1$ are user-chosen constants\footnote{We may set $l_{k+1} = \ceil{C\hat{R}_{k} + D}$ in the update scheme if $C$ or $D$ are non integer.}. In the case that $C = 4C_{l}$, $D = 4C_{l}(\log16 + 1)$, we have that $C\hat{R}_{k} + D = 4 C_l (\log16 + \hat{R}_{k} + 1)$, closely resembling the $4 C_l (\log16 + r + 1)$ sketch size in \autoref{thm:CBGN_subspace_first}. By setting $C, D$ to these particular values, we shall prove a $\mathcal{O}(\epsilon^{-\frac{3}{2}})$ convergence rate result for R-ARC-D using this update rule.
 
A key part is proving that if $l_{k} < Cr_{k} + D$, the sketch size will increase for subsequent iterations with probability 1. As we only have access to $r_{k}$ through $\hat{r}_{k}$, we must show that $\SK \hessFK \SK^{\top}$ preserves information on the rank of $\hessFK$, so that $\hat{r}_{k}$ preserves information from $r_{k}$. We state the following two Lemmas before proving \autoref{lem:rank_preserve} which gives the required result.

\begin{lemma} \label{lem:orth:invariance}
    Let $S \in \mathbb{R}^{l \times d}$ be a scaled Gaussian matrix and let $Q \in \mathbb{R}^{d \times d}$ be an orthogonal matrix, then $SQ$ is also a scaled Gaussian matrix.
\end{lemma}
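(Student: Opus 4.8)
The plan is to reduce the statement to the rotational invariance of the isotropic Gaussian distribution. By \autoref{def:Gaussian}, $S$ being a scaled Gaussian matrix means its entries $S_{ij}$ are i.i.d.\ $N(0, l^{-1})$; equivalently, the rows of $S$ are independent random vectors, each distributed as $N(0, l^{-1} I_d)$. The goal is to show that the matrix $SQ$ has exactly the same structure, so that it again falls under \autoref{def:Gaussian}.

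First I would record that, since each entry of $SQ$ is a linear combination of the jointly Gaussian entries of $S$, the whole array $SQ$ is jointly Gaussian with mean zero. Consequently it suffices to compute the second moments of the entries of $SQ$ and check they match those of a scaled Gaussian matrix: for jointly Gaussian variables, being mean-zero and pairwise uncorrelated with a common variance already forces them to be i.i.d., so matching the covariance structure identifies the distribution completely.

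The key computation is then the covariance of two entries of $SQ$. Writing $(SQ)_{ij} = \sum_k S_{ik} Q_{kj}$ and using $\E[S_{ik} S_{i'k'}] = l^{-1}\delta_{ii'}\delta_{kk'}$, I would obtain
\[
\E\bracket{(SQ)_{ij}(SQ)_{i'j'}} = l^{-1}\delta_{ii'}\sum_k Q_{kj}Q_{kj'} = l^{-1}\delta_{ii'}(Q^\top Q)_{jj'} = l^{-1}\delta_{ii'}\delta_{jj'},
\]
where the final equality is precisely where the orthogonality $Q^\top Q = I_d$ enters. This shows the entries of $SQ$ are mean-zero, pairwise uncorrelated, and each of variance $l^{-1}$; combined with joint Gaussianity, they are therefore i.i.d.\ $N(0,l^{-1})$, i.e.\ $SQ$ is a scaled Gaussian matrix, as required.

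I do not expect a genuine obstacle here, as the result is a standard consequence of the invariance of the standard Gaussian under orthogonal transformations; the only thing to be careful about is the bookkeeping of which index the orthogonality acts on (the column index, through $Q^\top Q$, rather than the row index). An equivalent and perhaps more conceptual route I could take is to argue row by row: the $i$-th row of $SQ$, viewed as a column vector, equals $Q^\top s_i$ where $s_i \sim N(0, l^{-1} I_d)$, so $Q^\top s_i \sim N(0, l^{-1} Q^\top Q) = N(0, l^{-1} I_d)$, while independence across rows is preserved because the same deterministic map $Q^\top$ is applied to independent rows. Either route delivers the claim.
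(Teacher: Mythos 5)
Your argument is correct and complete. Note that the paper itself states this lemma without proof, treating it as a standard fact about the orthogonal invariance of the isotropic Gaussian distribution; your covariance computation (joint Gaussianity of the entries of $SQ$, zero mean, and $\mathbb{E}[(SQ)_{ij}(SQ)_{i'j'}] = l^{-1}\delta_{ii'}(Q^{\top}Q)_{jj'} = l^{-1}\delta_{ii'}\delta_{jj'}$) supplies exactly the justification the paper leaves implicit, and your alternative row-by-row argument via $Q^{\top}s_i \sim N(0, l^{-1}Q^{\top}Q)$ is an equally valid shortcut. There is nothing to correct.
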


\begin{lemma}[\cite{caron_zero_2005}]\label{lem:polynomial:zeros}
   For any $n \geq 0$, a polynomial $p$ from $\R^{n} \rightarrow \R$, is either identically 0, or non-zero almost everywhere. That is, $\mu(Z) = 0$ where $Z$ is the set of zeros and $\mu$ is the Lebesgue measure on $\R^{n}$. 
\end{lemma}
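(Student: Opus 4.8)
The plan is to argue by induction on the number of variables $n$, reducing each dimension to the previous one by a slicing (Fubini--Tonelli) argument. Since $p$ is a polynomial it is continuous, hence its zero set $Z = \{x \in \R^n : p(x) = 0\}$ is closed and therefore Lebesgue measurable, so no measurability issue arises. For the base cases, if $n = 0$ then $p$ is a non-zero constant and $Z = \emptyset$; if $n = 1$ then a non-zero univariate polynomial of degree $m$ has at most $m$ real roots, so $Z$ is finite and hence of Lebesgue measure zero.

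For the inductive step I would assume the statement in $\R^{n-1}$, take $p \not\equiv 0$ on $\R^n$, write $x = (x', x_n)$ with $x' \in \R^{n-1}$, and expand in the last coordinate,
\begin{equation*}
    p(x', x_n) = \sum_{j=0}^{m} c_j(x')\, x_n^{j},
\end{equation*}
where each $c_j$ is a polynomial on $\R^{n-1}$. Because $p$ is not identically zero, some coefficient $c_{j_0}$ is not identically zero; by the inductive hypothesis the set $E := \{x' \in \R^{n-1} : c_{j_0}(x') = 0\}$ is a null set. For every $x' \notin E$ the map $x_n \mapsto p(x', x_n)$ is a non-zero univariate polynomial (its $x_n^{j_0}$ coefficient does not vanish), so by the base case its slice $Z_{x'} := \{x_n \in \R : p(x', x_n) = 0\}$ is finite and thus of one-dimensional measure zero.

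To conclude, I would apply Tonelli's theorem to the non-negative measurable function $\mathbf{1}_Z$ on the product $\R^{n-1} \times \R$, which gives
\begin{equation*}
    \mu(Z) = \int_{\R^{n-1}} \mu_1\!\left( Z_{x'} \right)\, dx'.
\end{equation*}
The integrand is zero for every $x' \notin E$, and $E$ has measure zero, so the integral vanishes and $\mu(Z) = 0$. I do not expect a genuine obstacle here: the only delicate bookkeeping is checking that a single non-vanishing coefficient forces $p(x', \cdot)$ to be a bona fide non-zero polynomial off a null set of $x'$, which is exactly what the inductive hypothesis supplies; everything else is the standard slicing argument for polynomial zero sets.
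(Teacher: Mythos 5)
Your argument is correct and complete: the induction on dimension, the extraction of a not-identically-zero coefficient $c_{j_0}$, and the Tonelli slicing step are all sound, and the measurability of $Z$ (closed set) and of its slices is rightly noted. The paper itself gives no proof of this lemma --- it is stated with a citation to an external reference --- and the standard proof in that reference is precisely the induction-plus-Fubini argument you reproduce, so there is nothing to reconcile; your write-up simply makes self-contained what the paper delegates to a citation.
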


We can use these two results in proving the following:

\begin{lemma}\label{lem:rank_preserve}
    Let $A \in \R^{d \times d}$ be a symmetric matrix of rank $r$ and let $S \in \R^{l \times d}$ be a scaled Gaussian matrix with $l \leq d$. Let $m := \min(l, r)$, then we have:
    \begin{equation}
        \mathbb{P}(\rank(SAS^{\top}) = m) = 1.
    \end{equation}
\end{lemma}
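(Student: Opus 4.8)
The plan is to reduce $SAS^\top$ to a Gaussian quadratic form living only on the nonzero part of $A$'s spectrum, bound its rank above by $m$ deterministically, and then invoke the polynomial dichotomy of \autoref{lem:polynomial:zeros} to show the rank is at least $m$ almost surely. First I would diagonalize: since $A$ is symmetric of rank $r$, write $A = Q \Lambda Q^\top$ with $Q \in \R^{d\times d}$ orthogonal and $\Lambda = \mathrm{diag}(\lambda_1,\dots,\lambda_r,0,\dots,0)$ where $\lambda_1,\dots,\lambda_r \ne 0$. Then $SAS^\top = (SQ)\Lambda(SQ)^\top$, and by \autoref{lem:orth:invariance} the matrix $\tilde{S} := SQ$ is again a scaled Gaussian matrix. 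Writing $\tilde{S}_r \in \R^{l\times r}$ for its first $r$ columns and $\Lambda_r := \mathrm{diag}(\lambda_1,\dots,\lambda_r)$ (invertible), the trailing zero block of $\Lambda$ drops out, so $SAS^\top = \tilde{S}_r \Lambda_r \tilde{S}_r^\top$. Since the entries of $\tilde{S}$ are i.i.d.\ $N(0,l^{-1})$, the entries of $\tilde{S}_r$ have a nondegenerate Gaussian density on $\R^{lr}$.

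The deterministic upper bound is immediate: $\rank(\tilde{S}_r \Lambda_r \tilde{S}_r^\top) \le \rank(\tilde{S}_r) \le \min(l,r) = m$. The crux is the matching lower bound, which only holds almost surely — indeed, when $\Lambda_r$ is indefinite, cancellation can genuinely drop the rank for special $\tilde{S}_r$. To handle this, I would consider the leading $m\times m$ principal minor of $\tilde{S}_r \Lambda_r \tilde{S}_r^\top$; each of its entries is a quadratic form in the entries of $\tilde{S}_r$, so this minor is a polynomial $p:\R^{lr}\to\R$. Taking the first $m$ rows of $\tilde{S}_r$ equal to $[\,I_m \mid 0\,] \in \R^{m\times r}$ (possible since $m \le \min(l,r)$) makes the corresponding principal submatrix equal to $\mathrm{diag}(\lambda_1,\dots,\lambda_m)$, whose determinant $\prod_{i=1}^m \lambda_i \ne 0$. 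Hence $p$ is not identically zero, so by \autoref{lem:polynomial:zeros} its zero set has Lebesgue measure zero; off this set $\rank(\tilde{S}_r \Lambda_r \tilde{S}_r^\top) \ge m$.

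Combining the two bounds, $\rank(SAS^\top) = m$ except on a Lebesgue-null subset of $\R^{lr}$. Since $\tilde{S}_r$ possesses a density with respect to Lebesgue measure on $\R^{lr}$, this null set carries zero probability, which yields $\mathbb{P}(\rank(SAS^\top) = m) = 1$. The only genuinely delicate point is the lower bound: one must resist claiming $\rank(\tilde{S}_r \Lambda_r \tilde{S}_r^\top) = \rank(\tilde{S}_r)$ as an identity (false when $\Lambda_r$ has mixed signs) and instead argue generically, exhibiting the explicit non-vanishing witness above and passing from ``Lebesgue-almost-everywhere'' to ``with probability one'' via absolute continuity of the Gaussian law.
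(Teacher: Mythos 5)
Your proof is correct and follows essentially the same route as the paper's: eigendecomposition plus orthogonal invariance (\autoref{lem:orth:invariance}), reduction to a leading $m\times m$ principal submatrix, the determinant-as-polynomial argument with the explicit witness $[\,I_m \mid 0\,]$, \autoref{lem:polynomial:zeros}, and absolute continuity of the Gaussian law. The only differences are cosmetic (you discard the zero columns of $\Lambda$ up front and work in $\R^{lr}$ rather than $\R^{dm}$), and your explicit warning against claiming $\rank(\tilde{S}_r\Lambda_r\tilde{S}_r^{\top})=\rank(\tilde{S}_r)$ for indefinite $\Lambda_r$ is a sound observation that the paper leaves implicit.
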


\begin{proof}
    It is immediate from the dimension of $S$ that $\rank(SAS^{\top}) \leq m$ with probability 1. As $A$ is symmetric, we can take an eigenvalue decomposition $A = V\Lambda V^{\top}$ where $\Lambda$ is a diagonal matrix with $\Lambda_{ii} \neq 0 \iff i \leq r$ and $V$ is an orthogonal matrix. Using this eigenvalue decomposition and \autoref{lem:orth:invariance}, we have:
    \begin{equation}
        SAS = SV\Lambda V^{\top}S^{\top} \stackrel{d}{\sim} S\Lambda S^{\top}
    \end{equation}

    Let $\tilde{S} \in \R^{m \times d}$ denote the submatrix formed by taking the first $m$ rows of $S$. We can observe that $\tilde{S}\Lambda \tilde{S}^{\top}$ is the leading $m \times m$ submatrix of $S\Lambda S^{\top}$. Hence we have that 
    \begin{equation}
        \rank(\tilde{S}\Lambda \tilde{S}^{\top}) \leq \rank(S\Lambda S^{\top}) \leq m.
        \label{eq:sandwich}
    \end{equation}
    
    We now make the observation that $\rank(\tilde{S}\Lambda \tilde{S}^{\top}) = m \iff \det(\tilde{S}\Lambda \tilde{S}^{\top}) \neq 0$. By flattening $\tilde{S}$, we can consider the elements as forming a vector $v_{\tilde{S}}$ in $\R^{dm}$. We can further consider $\det(\tilde{S}\Lambda \tilde{S}^{\top})$ as a polynomial $p: \R^{dm} \rightarrow \R$ with $p(v_{\tilde{S}}) = \det(\tilde{S}\Lambda \tilde{S}^{\top})$.

    To apply \autoref{lem:polynomial:zeros}, we must show that $p$ is non-zero. To see this, suppose $\tilde{S} = [I_{m\times m} : 0_{m \times (d - m)}]$, then $\tilde{S}\Lambda \tilde{S}^{\top}$ is equal to the leading $m \times m$ submatrix of $\Lambda$, which is full rank (and hence has non-zero determinant). Hence $p$ is non-zero and we have that the set of zeros of $p$ is a null set with respect to the Lebesgue measure on $\R^{dm}$.
    
    As the entries of $\tilde{S}$ are Gaussian, the law of $v_{\tilde{S}}$ is absolutely continuous with respect to the Lebsegue measure on $\R^{dm}$. Hence, $\mathbb{P}(\det (\tilde{S}\Lambda \tilde{S}^{\top}) = 0) = 0$. It follows that $\mathbb{P}(\rank(\tilde{S}\Lambda \tilde{S}^{\top}) = m) = 1$ and hence by \autoref{eq:sandwich}, $\mathbb{P}(\rank(SAS^{\top}) = m) = 1$.
\end{proof}

This Lemma demonstrates that in the case of Gaussian sketching matrices, access to $\hat{r}_{k}$ preserves information on $r_{k}$. We can prove an additional Lemma demonstrating how this information enables the desired behaviour of the update scheme.
\begin{lemma}\label{lem:lk:increases}
    Set $l_0 \geq 1$ and suppose that the update rule in \autoref{eq:l_k:update:step} is applied to $l_k$. For all $k \geq 1$, if $l_{k} < Cr_{k} + D$, then with probability 1, $\hat{R}_{k} > \hat{R}_{k-1}$.
\end{lemma}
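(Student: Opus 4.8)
The plan is to separate a purely deterministic bookkeeping fact about the update rule from the single probabilistic input, \autoref{lem:rank_preserve}. Fix $k \geq 1$ and assume the hypothesis $l_k < Cr_k + D$.

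First I would establish, by induction on $k$, the pathwise (surely true) inequality
\begin{equation*}
    l_k \geq C\hat{R}_{k-1} + D \qquad \text{for all } k \geq 1.
\end{equation*}
The base case $k=1$ is immediate from \eqref{eq:l_k:update:step}, since the clause ``$k=0$'' forces $l_1 = \max(C\hat{R}_0 + D, l_0) \geq C\hat{R}_0 + D$. For the inductive step, note that $\hat{R}$ is a running maximum, so $\hat{R}_k \geq \hat{R}_{k-1}$ always; if $\hat{R}_k > \hat{R}_{k-1}$ the update rule sets $l_{k+1} = \max(C\hat{R}_k + D, l_k) \geq C\hat{R}_k + D$, while if $\hat{R}_k = \hat{R}_{k-1}$ it keeps $l_{k+1} = l_k \geq C\hat{R}_{k-1} + D = C\hat{R}_k + D$ by the inductive hypothesis. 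Either way $l_{k+1} \geq C\hat{R}_k + D$, closing the induction. This step is entirely about the $\max$-structure of \eqref{eq:l_k:update:step} and holds for every realization.

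Next, combining this bound with the hypothesis gives $C\hat{R}_{k-1} + D \leq l_k < Cr_k + D$, hence $\hat{R}_{k-1} < r_k$ (using $C \geq 1 > 0$). I would then invoke \autoref{lem:rank_preserve}: since at iteration $k$ the sketch $\SK$ is drawn as a fresh scaled Gaussian independently of the past given $x_k$, conditioning on the realized $\hessFK$ (hence on $r_k$) and on $l_k$ yields $\hat{r}_k = \min(l_k, r_k)$ with probability $1$. (If $l_k > d$ one applies the lemma to the first $d$ rows, which leaves $\min(l_k, r_k) = r_k$ unchanged since $r_k \leq d$.)

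Finally I would conclude by a short two-case argument showing $\hat{r}_k > \hat{R}_{k-1}$ on this probability-one event. If $l_k \leq r_k$ then $\hat{r}_k = l_k \geq C\hat{R}_{k-1} + D \geq \hat{R}_{k-1} + 1 > \hat{R}_{k-1}$, where the penultimate inequality uses $C, D \geq 1$; if $l_k > r_k$ then $\hat{r}_k = r_k > \hat{R}_{k-1}$ directly. In both cases $\hat{R}_k = \max(\hat{R}_{k-1}, \hat{r}_k) = \hat{r}_k > \hat{R}_{k-1}$ with probability $1$, which is the claim. The only genuinely delicate point is the induction of the first paragraph --- correctly propagating the bound through the ``flat'' iterations where $\hat{R}$ does not increase and $l$ is frozen --- together with the observation that $C, D \geq 1$ is exactly what upgrades the weak inequality to the strict one needed when $l_k \leq r_k$; everything probabilistic is outsourced to \autoref{lem:rank_preserve}.
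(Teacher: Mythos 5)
Your proposal is correct and follows essentially the same route as the paper: both arguments rest on the invariant $l_k \geq C\hat{R}_{k-1} + D$ (which you prove explicitly by induction, while the paper reads it directly off the update rule), deduce $\hat{R}_{k-1} < \min(l_k, r_k)$ from the hypothesis and $C, D \geq 1$, and then apply \autoref{lem:rank_preserve} to conclude $\hat{r}_k = \min(l_k, r_k) > \hat{R}_{k-1}$ with probability 1. Your version is simply a more carefully spelled-out rendering of the paper's argument, including the harmless edge case $l_k > d$.
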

\begin{proof}
    By the update rule in \autoref{eq:l_k:update:step}, we have that $l_{k} < Cr_{k} + D \implies \hat{R}_{k-1} < r_{k}$. Also, $l_{k} \geq \hat{R}_{k-1} + 1$ as $C, D \geq 1$. It follows that $l_{k} < Cr_{k} + D \implies \hat{R}_{k-1} < \min(l_{k}, r_{k})$.  Hence, by \autoref{lem:rank_preserve}, $\hat{R}_{k} = \min(l_{k}, r_{k}) > \hat{R}_{k-1}$ with probability 1.
\end{proof}

Before stating and proving a convergence result, we first introduce the variable $\delta_{S, k}^{(3)} := \pow{e}{-\frac{l_{k}(\epSTwo)^2}{C_l} + r_{k} +1 }$ to adjust the constant $\deltaSThree$ to account for the fact that the sketch dimension can increase through iterations. 

\begin{theorem}\label{thm:efficient:dimension:increase}
    Let $\cal{S}$ be the distribution of scaled Gaussian matrices $S \in \R^{l \times d}$ defined in \autoref{def:Gaussian}. Suppose that $f$ is a low-rank function of rank $r$, bounded below by $f^*$, twice continuously differentiable with Lipschitz continuous $\grad^2 f$ (with constant $\LH$) and let $\epsilon>0$.
    Choose $l_{0} \geq 1; \epSTwo=\frac{1}{2}; 
    \deltaSTwo = \frac{1}{16};
    $ so that
    $\delta_{S, k}^{(3)} := \pow{e}{-\frac{l_{k}(\epSTwo)^2}{C_l} + r_{k} +1 };
    \deltaS = \frac{1}{8} <\frac{c}{(c+1)^2}; 
    \sMax = 1 + 
    \frac{ \sqrt{d} + \sqrt{2\log16} }
    {\sqrt{l_{0}}}$, where $C_l$ is defined in \eqref{deltaSThree:eq}. Apply \autoref{alg:R-ARC-D} to minimizing $f$ for $N$ iterations with the update for $l_k$ given by \autoref{eq:l_k:update:step} with $C = 4C_{l}$, $D = 4C_{l}(\log16 + 1)$. 
	Then for any $\delta_1 \in (0,1)$ with 
    \begin{equation}
        g(\deltaOne) >0, \nonumber
    \end{equation}
    where 
    \begin{equation}
        g(\deltaOne) = \nPreFactorTRCubic, \nonumber
    \end{equation}
    if $N\in \N$ satisfies 
    \begin{equation}
        N \geq \underbrace{g(\deltaOne) \squareBracket{
             \fZeroMinusfStarOverH
             + \frac{4 C_l (\log16 + r + 1)}{1+c}}}_{N'} + r + 1, \nonumber
    \end{equation}
    where $h(\epsilon, \alpha_k)\sim \epsilon^{3/2}$ is defined in \eqref{eq:hEpsAlphaCB} with $\epSTwo, \sMax$ defined above, $\alphaLow$ is given in \eqref{eq:alphaLow:CBGN} and
     $\alphaMin = \alphaZero \gammaOne^\newL$ associated with $\alphaLow$,  for some $\newL \in \N^+$.
    Then we have that
    \begin{equation}
        \probability{\min_{k\leq N} \{\normTwo{\grad f(x_{k+1})}\} \leq \epsilon } \geq 1 - e^{-\frac{7\delta_1^2}{16} N'}. \nonumber
    \end{equation}    
\end{theorem}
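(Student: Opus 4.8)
The plan is to reduce this to \autoref{cor:low_rank_first_convergence} by showing that the adaptive rule \eqref{eq:l_k:update:step} can produce at most $r+1$ iterations at which the sketch is too small to embed the current Hessian, and that on every other iteration the algorithm behaves exactly as in the fixed-dimension case. Call iteration $k$ \emph{undersized} if $l_k < Cr_k + D$ and \emph{adequate} otherwise, where $r_k = \rank(\hessFK)$. The key point is that on adequate iterations the embedding failure probability collapses to $1/16$: substituting $\epSTwo = \tfrac12$ and $l_k \geq Cr_k + D = 4C_l(r_k + \log 16 + 1)$ into $\delta_{S,k}^{(3)} = e^{-l_k(\epSTwo)^2/C_l + r_k + 1}$ gives an exponent of at most $-\log 16$, hence $\delta_{S,k}^{(3)} \leq 1/16$, and together with $\deltaSTwo = 1/16$ this yields $\deltaS = 1/8 < c/(c+1)^2$, exactly as required by \autoref{thm2}.

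First I would bound the number of undersized iterations. By \autoref{lem:low:rank:hessian}, $r_k \leq r$ for every $k$, and by \autoref{lem:rank_preserve} the observed ranks satisfy $\hat{r}_k = \min(l_k, r_k)$ almost surely, so $\hat{R}_k = \max_{j \leq k} \hat{r}_j$ is a non-decreasing integer sequence bounded above by $r$. \autoref{lem:lk:increases} states that every undersized iteration $k \geq 1$ forces a strict increase $\hat{R}_k > \hat{R}_{k-1}$ with probability $1$; since $\hat{R}_k$ can increase strictly at most $r$ times before saturating at its bounded limit, there are at most $r$ undersized iterations with $k \geq 1$, and adding the possibly-undersized iteration $k=0$ gives at most $r+1$ undersized iterations almost surely. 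Consequently, among any $N \geq N' + (r+1)$ iterations, at least $N'$ are adequate.

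Next I would check that on adequate iterations all four assumptions of \autoref{thm2} hold with the stated constants, so the fixed-dimension analysis transfers. \autoref{AA2} follows from \autoref{lem:arbitatratyDeltaS}: on an adequate iteration \autoref{lem:Gauss_embedding} gives $\aKOne$ with probability $\geq 1 - \delta_{S,k}^{(3)} \geq 15/16$, and since $l_k \geq l_0$ and the bound of \autoref{lem:GaussSMax:CubicSubspace} is decreasing in $l$, the choice $\sMax = 1 + (\sqrt{d} + \sqrt{2\log16})/\sqrt{l_0}$ is a uniform upper bound giving $\aKTwo$ with probability $\geq 15/16$; thus the iteration is true with probability $\geq 1 - \deltaS = 7/8$, conditionally on the history and independently of past truth values, because $(l_k, x_k)$ --- and hence the adequate/undersized label --- is measurable with respect to the past while the embedding uses the freshly drawn $S_k$. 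Assumptions \autoref{AA3}, \autoref{AA4} and \autoref{AA5} are inherited verbatim from \autoref{lem:cubic:sub:A2}, \eqref{eq:hEpsAlphaCB} and \autoref{tmp-2022-1-15-1}, with $h$ evaluated at the conservative $\sMax$ above so that the guaranteed decrease is uniform across iterations.

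Finally I would invoke the mechanism of \autoref{thm2} (as applied in \autoref{cor:low_rank_first_convergence}) on the adequate iterations: treating the at most $r+1$ undersized iterations as harmless, since they leave $f$ non-increasing by \autoref{AA5}, the $\geq N'$ adequate iterations, each true with probability $\geq 7/8$, drive $\min_{k\leq N}\normTwo{\grad f(x_{k+1})}$ below $\epsilon$, and the Chernoff bound over these $N'$ iterations yields probability $1 - e^{-7\delta_1^2 N'/16}$. I expect the main obstacle to be precisely this last step: because the adequate/undersized split is itself random, the Chernoff argument runs over a random subsequence rather than a fixed initial block (unlike the clean cutoff in \autoref{thm:low_rank_first_convergence_increase_sketch}), so care is needed to argue that the conditional truth probability $\geq 7/8$ and the conditional independence survive the adaptive, history-dependent choice of $l_k$, and that discarding the undersized iterations costs only the additive $r+1$ without disturbing the exponent $N'$.
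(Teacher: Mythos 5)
Your proposal follows essentially the same route as the paper's proof: bound the number of ``undersized'' iterations by $r+1$ via \autoref{lem:lk:increases} and the fact that $\hat{R}_k$ is a non-decreasing integer sequence bounded by $r$, observe that $f$ is non-increasing on those iterations by \autoref{tmp-2022-1-15-1}, and apply the fixed-dimension analysis of \autoref{thm:CBGN_subspace_first}/\autoref{cor:low_rank_first_convergence} to the remaining $N'$ adequate iterations with the conservative $\sMax$ based on $l_0$. The subtlety you flag at the end --- that the adequate/undersized split is itself random, so the Chernoff bound formally runs over a random subsequence --- is real, but the paper's own proof passes over it just as quickly, simply asserting that with probability $1$ a subsequence of $K$ adequate iterations exists and that at most $r+1$ extra iterations are needed.
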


\begin{proof}
    By the statement theorem, we have that if $l_{k} \geq 4 C_l (\log16 + r_{k} + 1)$, then $\delta_{S, k}^{(3)} \leq \frac{1}{16}$. Applying
    \autoref{lem:lk:increases} with $C = 4C_{l}$, $D = 4C_{l}(\log16 + 1)$ gives that with probability 1, $l_{k} < 4 C_l (\log16 + r_{k} + 1)$ implies $\hat{R}_{k} > \hat{R}_{k-1}$, namely, $\hat{R}_{k}$ increases. As $0 \leq \hat{R}_{k} \leq r$, there can be at most $r + 1$ values of $k$ for which $\hat{R}_{k}$ increases.

    Suppose the algorithm is ran for $K + r + 1$ iterations for some $K \leq 0$, then with probability 1, there is some subsequence $(k_{p})_{1 \leq p \leq K}$ such that $l_{k_{p}} \geq 4 C_l (\log16 + r_{k} + 1)$ for all $p$ and hence $\delta_{S, k_{p}}^{(3)} \leq \frac{1}{16}$ for these iterations.

    By \autoref{tmp-2022-1-15-1}, we have that the function value does not increase on the remaining $r + 1$ iterates. Hence at most $r + 1$ additional iterations are required from the bound in \autoref{thm:CBGN_subspace_first}.
\end{proof}

The result of \autoref{cor:low_rank_first_convergence} is significant because it demonstrates that \autoref{alg:R-ARC-D} can attain the optimal $\mathcal{O}(\epsilon^{-\frac{3}{2}})$ rate of convergence for low-rank functions, even in the case where the rank $r$ is not known beforehand. Furthermore, the update scheme is efficient in that the sketch dimension $l_{k}$ will not increase more than what is needed. In the worst case, the dependence of the required number of iterations $N$ on the dimension $d$ is proportional now to $\left ( \frac{d}{l_0} \right )^{3/4}$.

\section{Global Convergence Rate of R-ARC to 
Second-Order Critical Points}

\subsection{R-ARC convergence to second-order (subspace) critical points}

In this section, we show that 
\autoref{alg:R-ARC} converges to a (subspace) second-order critical point of
$f(x)$.
We aim to upper bound the number of iterations until the following approximate second-order criticality condition is achieved,
\begin{equation}
    \nEps = \nTwoEpsH = \min \set{k: \lambdaMin{S_k \hessFK S_k^T} \geq -\epH},
    \label{tmp-2021-12-31-6}
\end{equation}
where $\epH>0$ is an accuracy tolerance for the second-order condition (and may be different than the $\epsilon>0$ for first-order optimality criteria).
And we define $\sMax$-true iterations as follows.

\begin{definition} \label{def:true:cubic:subspaceHess}
Let $\sMax>0$. An iteration $k$ is true if $\normTwo{S_k} \leq \sMax$.
\end{definition}

Compared to Section \ref{sec:rarc-convergence}, here we have a less restrictive definition of true iterations. Consequently, it is easy to show \autoref{AA2} is true.

\paragraph{Satisfying \autoref{AA2}}
For $S$ being a scaled Gaussian matrix, \autoref{lem:GaussSMax:CubicSubspace} gives that \autoref{alg:R-ARC} satisfies \autoref{AA2} 
with any $\deltaS \in (0,1)$ and 
\begin{equation}
    \sMax = 1 + \sqrt{\frac{d}{l}} + \sqrt{\frac{2\logOneOverDeltaS}{l}}. \notag
\end{equation} 

\paragraph{Satisfying \autoref{AA3}}

\begin{lemma}
Let $f$ be twice continuously differentiable with Lipschitz continuous Hessian (with constant $\LH$).
\autoref{alg:R-ARC} satisfies \autoref{AA3} with
\begin{equation}
    \alphaLow = \frac{2(1-\theta)}{\LH} \label{eq:alphaLow:CBGN:secondOrder}
\end{equation}

\end{lemma}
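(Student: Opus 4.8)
The plan is to follow the proof of \autoref{lem:cubic:sub:A2} (the first-order counterpart) almost verbatim, since the acceptance test \eqref{eq:cubic:sufficient_decrease} and the choice $\alphaLow = \frac{2(1-\theta)}{\LH}$ in \eqref{eq:alphaLow:CBGN:secondOrder} are identical. The only genuinely new ingredient is establishing that the step is nonzero, $\normTwo{\SKTransposedsKHat} > 0$, at any true iteration with $k < \nEps$. In the first-order analysis this positivity came from \autoref{stepSizeSubEmbed}, which relied on the gradient/Hessian embedding of \autoref{def:true:CBGN}. Here the weaker notion of true iteration in \autoref{def:true:cubic:subspaceHess} (only $\normTwo{\SK}\leq\sMax$) no longer supplies such a bound, so I would instead extract nonzero-ness from the \emph{retained} second-order termination condition \eqref{hessMKGeq0} together with the second-order meaning of $k < \nEps$.

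The first step is therefore a short argument by contradiction for $\normTwo{\SKTransposedsKHat} > 0$. Suppose $\SKTransposedsKHat = 0$. The cubic term $\frac{1}{3\alphaK}\normTwo{\SK^T\sHat}^3$ is $C^2$ in $\sHat$ with vanishing Hessian wherever $\SK^T\sHat = 0$ (the map $v \mapsto \normTwo{v}^3$ has zero Hessian at the origin), so differentiating \eqref{mkGradCubic} and evaluating at such a point gives $\grad^2 \mKHat{\sKHat} = \SK \hessFK \SK^T$. Since $\normTwo{\SKTransposedsKHat} = 0$ makes the right-hand side of \eqref{hessMKGeq0} equal to zero, \eqref{hessMKGeq0} forces $\SK \hessFK \SK^T \succeq 0$, i.e. $\lambdaMin{\SK \hessFK \SK^T} \geq 0$. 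But $k < \nEps$ with $\nEps$ defined in \eqref{tmp-2021-12-31-6} means precisely $\lambdaMin{\SK \hessFK \SK^T} < -\epH < 0$, a contradiction. Hence $\normTwo{\SKTransposedsKHat} > 0$.

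With a nonzero step in hand, the remainder is identical to \autoref{lem:cubic:sub:A2}. From \eqref{tmp:CBGN:3} and $\fK = \qKHat{0}$ I get $\fK - \qKHat{\sKHat} \geq \frac{1}{3\alphaK}\normTwo{\SKTransposedsKHat}^3 > 0$, so the ratio $\rho_k$ of \eqref{eqn:rho_k:CBGN} is well defined and $\abs{1-\rho_k} = \abs{\fKPlusOne - \qKHat{\sKHat}}/\abs{\fK - \qKHat{\sKHat}}$. Bounding the numerator by $\frac{1}{6}\LH\normTwo{\sK}^3$ via Corollary A.8.4 in \cite{cartis_evaluation_2022} (recall $\sK = \SKTransposedsKHat$) and cancelling the common $\normTwo{\sK}^3$ yields $\abs{1-\rho_k} \leq \frac{1}{2}\alphaK\LH \leq 1-\theta$, using $\alphaK \leq \alphaLow$ and \eqref{eq:alphaLow:CBGN:secondOrder}. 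Thus $\rho_k \geq \theta$, which is exactly the sufficient-decrease condition \eqref{eq:cubic:sufficient_decrease}, so the iteration is successful.

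The delicate point, and the step I expect to require the most care, is this nonzero-step argument: one must justify that the model Hessian really does collapse to $\SK \hessFK \SK^T$ when $\SK^T\sKHat = 0$ (smoothness of the cubic at the origin) and that \eqref{hessMKGeq0} is the condition doing the work. This is precisely why the second-order termination criterion cannot be dropped in this setting, in contrast to the first-order lemma where \eqref{hessMKGeq0} was optional.
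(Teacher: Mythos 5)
Your proposal is correct and follows essentially the same route as the paper, which likewise reduces the argument to the proof of \autoref{lem:cubic:sub:A2} and only needs to re-establish $\normTwo{S_k^T\sKHat}>0$ before convergence. The sole difference is that the paper obtains this positivity by invoking the quantitative bound of \autoref{lem:SubHessNegImpStepLower}, whereas you rederive a qualitative version of it by contradiction from \eqref{hessMKGeq0} and the second-order definition of $\nEps$ in \eqref{tmp-2021-12-31-6} --- the same underlying mechanism.
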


The proof is similar to \autoref{lem:cubic:sub:A2}, where the condition $\normTwo{S_k^T \sKHat} > 0$ on true iterations before convergence is ensured by  \autoref{lem:SubHessNegImpStepLower}.

\paragraph{Satisfying \autoref{AA4}}
We calculate
\begin{equation}
    \grad^2 \mKHat{\sHat} = 
    S_k \hessFK S_k^T
    + \frac{1}{\alphaK} \squareBracket{
    \normTwo{S_k^T \sHat}^{-1} \bracket{
    S_k S_k^T \sHat} \bracket{S_kS_k^T \sHat}^T
    + \normTwo{S_k^T \sHat} S_k S_k^T}.
    \label{hessMkExpr}
\end{equation}
Therefore for any $y \in \R^l$, we have
\begin{equation}
    y^T \grad^2 \mKHat{\sHat} y 
    = y^T S_k \hessFK S_k^T y
    + \frac{1}{\alphaK} \squareBracket{
    \normTwo{S_k^T \sHat}^{-1} 
    \squareBracket{\bracket{
    S_kS_k^T \sKHat}^T y}^2 +
    \normTwo{S_k^T \sHat} \bracket{
    S_k^T y}^2}. \label{hessMkExprWithY}
\end{equation}
The following Lemma says that if the subspace Hessian has
negative curvature, then the step size is bounded below 
by the size of the negative curvature (and also depends on $\alphaK$).

\begin{lemma} \label{lem:SubHessNegImpStepLower}
If $\lambdaMin{S_k \hessFK S_k^T } < -\epH$; 
and $\normTwo{S_k} \leq \sMax$, 
then 
\begin{equation*}
    \normTwo{S_k^T \sKHat} \geq \epH \squareBracket{\frac{2\sMax^2}{\alphaK} + \kappaS}^{-1}.
\end{equation*}
\end{lemma}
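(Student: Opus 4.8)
The plan is to evaluate the subspace second-order optimality condition \eqref{hessMKGeq0} along the most negative curvature direction of the subspace Hessian $S_k \hessFK S_k^T$, and then to absorb the two regularization contributions into the stated prefactor using the operator-norm bound $\normTwo{S_k} \leq \sMax$.

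First I would let $v \in \R^l$ be a unit-norm eigenvector of $S_k \hessFK S_k^T$ associated with its smallest eigenvalue, so that $v^T S_k \hessFK S_k^T v = \lambdaMin{S_k \hessFK S_k^T} < -\epH$. Testing the quadratic-form identity \eqref{hessMkExprWithY} with $y = v$ and invoking \eqref{hessMKGeq0}, which yields $v^T \grad^2 \mKHat{\sKHat} v \geq -\kappaS \normTwo{S_k^T \sKHat}$, then rearranging and using $-\lambdaMin{S_k \hessFK S_k^T} > \epH$, I obtain
\begin{equation*}
    \frac{1}{\alphaK}\squareBracket{\normTwo{S_k^T \sKHat}^{-1}\squareBracket{\bracket{S_kS_k^T \sKHat}^T v}^2 + \normTwo{S_k^T \sKHat}\normTwo{S_k^T v}^2} + \kappaS \normTwo{S_k^T \sKHat} \geq \epH.
\end{equation*}

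Next I would bound each bracketed term by $\sMax^2 \normTwo{S_k^T \sKHat}$. For the first, I write $\bracket{S_kS_k^T \sKHat}^T v = \bracket{S_k^T \sKHat}^T \bracket{S_k^T v}$ and apply Cauchy--Schwarz together with $\normTwo{S_k^T v} \leq \normTwo{S_k} \leq \sMax$ (recall $\normTwo{v} = 1$), so that $\squareBracket{\bracket{S_kS_k^T \sKHat}^T v}^2 \leq \sMax^2 \normTwo{S_k^T \sKHat}^2$ and the $\normTwo{S_k^T \sKHat}^{-1}$ factor leaves $\sMax^2 \normTwo{S_k^T \sKHat}$; for the second, $\normTwo{S_k^T v}^2 \leq \sMax^2$ gives the same bound. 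Summing, the left-hand side is at most $\bracket{\frac{2\sMax^2}{\alphaK} + \kappaS}\normTwo{S_k^T \sKHat}$, and dividing through by this positive prefactor produces exactly the claimed lower bound.

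I do not expect any genuine obstacle here; the one point that requires care is the choice of test vector. One must test against the minimal eigenvector of the \emph{subspace} Hessian $S_k \hessFK S_k^T$ rather than of the full model Hessian $\grad^2 \mKHat{\sKHat}$, since it is precisely this choice that converts the negative-curvature hypothesis into a usable scalar inequality. The remaining estimates are a routine application of Cauchy--Schwarz and the norm bound on $S_k$.
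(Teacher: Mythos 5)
Your proof is correct and follows essentially the same route as the paper: both test the second-order model condition \eqref{hessMKGeq0} via the quadratic-form identity \eqref{hessMkExprWithY} and bound the two regularization terms by $\sMax^2\normTwo{S_k^T\sKHat}$ each; choosing the minimal eigenvector directly is just a restatement of the paper's step of minimizing over $\normTwo{y}=1$. No gaps.
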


\begin{proof}
Let $y \in \R^l$. Using \eqref{hessMKGeq0} and
\eqref{hessMkExprWithY} we have that
\begin{equation}
    y^T S_k \hessFK S_k^T y \geq
    - \frac{1}{\alphaK} \squareBracket{
    \NsKTSKHat^{-1} 
    \squareBracket{\bracket{
    S_kS_k^T \sKHat}^T y}^2 +
    \NsKTSKHat \bracket{
    S_k^T y}^2} - \kappaS\normTwo{S_k^T \sKHat}.  \notag
\end{equation}

Given $\normTwo{S_k} \leq \sMax$, 
we have that $S_k^T y \leq \sMax \normTwo{y}$. 
So we have
\begin{equation}
    y^T S_k \hessFK S_k^T y \geq
    - \frac{1}{\alphaK} \squareBracket{
    \NsKTSKHat^{-1} 
    \squareBracket{\bracket{
    S_kS_k^T \sKHat}^T y}^2 +
    \NsKTSKHat \sMax^2 \normTwo{y}^2} - \kappaS\normTwo{S_k^T \sKHat}. \notag
\end{equation} 

Minimizing over $\normTwo{y}=1$, noting that
$\max_{\normTwo{y}=1} \bracket{\bracket{
S_k S_k^T \sKHat}^T y}^2 = 
\normTwo{S_kS_k^T \sKHat}^2$, we have
\begin{align}
    -\epH > \lambdaMin{S_k \hessFK S_k^T }
    & \geq -\frac{1}{\alphaK}\squareBracket{
    \NsKTSKHat^{-1} \normTwo{
    S_k S_k^T \sKHat}^2 + \NsKTSKHat
    \sMax^2} - \kappaS\normTwo{S_k^T \sKHat} \notag \\
    & \geq -\frac{1}{\alphaK}\squareBracket{
    \NsKTSKHat^{-1} \sMax^2 \NsKTSKHat^2
    + \NsKTSKHat \sMax^2} - \kappaS\normTwo{S_k^T \sKHat} \notag\\
    &= -\frac{2\sMax^2}{\alphaK}\NsKTSKHat - \kappaS\normTwo{S_k^T \sKHat}. \notag
\end{align}
Rearranging the above gives the result.
\end{proof}

\begin{lemma}
\autoref{alg:R-ARC} satisfies \autoref{AA4} with
\begin{equation}
    h(\epsilon_H, \alphaK) = \frac{\theta\epH^3}{3\alphaK}\squareBracket{\frac{2\sMax^2}{\alphaK} + \kappaS}^{-3}.\label{h:cubic:secondOrder:subspace}
\end{equation}

\end{lemma}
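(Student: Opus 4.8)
The plan is to mirror the first-order verification of \autoref{AA4}, combining the objective decrease guaranteed on successful iterations (\autoref{succStepDecrease}) with a step-size lower bound, except that here the relevant lower bound comes from the negative-curvature estimate \autoref{lem:SubHessNegImpStepLower} rather than from \autoref{stepSizeSubEmbed}. Fix an iteration $k$ that is true and successful with $k < \nEps = \nTwoEpsH$. First I would convert the pre-convergence hypothesis into a curvature inequality: since $k < \nEps$ forces the criticality measure to still exceed the tolerance, the definition \eqref{tmp-2021-12-31-6} gives $\lambdaMin{S_k \hessFK S_k^T} < -\epH$. Since the iteration is true, \autoref{def:true:cubic:subspaceHess} gives $\normTwo{S_k} \leq \sMax$. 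Thus both hypotheses of \autoref{lem:SubHessNegImpStepLower} are met.

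Applying \autoref{lem:SubHessNegImpStepLower} then yields
\begin{equation*}
    \NsKTSKHat \geq \epH \squareBracket{\frac{2\sMax^2}{\alphaK} + \kappaS}^{-1}.
\end{equation*}
Because the iteration is also successful, \autoref{succStepDecrease} gives $\fK - \fKPlusOne \geq \frac{\theta}{3\alphaK}\NsKTSKHat^3$. Cubing the step bound and substituting it into this decrease produces
\begin{equation*}
    \fK - \fKPlusOne \geq \frac{\theta \epH^3}{3\alphaK}\squareBracket{\frac{2\sMax^2}{\alphaK} + \kappaS}^{-3} = h(\epH, \alphaK),
\end{equation*}
which is precisely \eqref{h:cubic:secondOrder:subspace}, establishing the inequality required by \autoref{AA4}.

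The two lemma invocations and the substitution are routine; the step I expect to need the most care is verifying the structural requirements \autoref{AA4} imposes on $h$, namely non-negativity and monotonicity in both arguments. Non-negativity and monotonicity in $\epH$ are immediate from the factor $\epH^3$ together with $\theta, \alphaK, \sMax > 0$ and $\kappaS \geq 0$. Monotonicity in $\alphaK$ is the delicate point: rewriting $h(\epH, \alphaK) = \frac{\theta \epH^3}{3}\,\alphaK^2 \bracket{2\sMax^2 + \kappaS \alphaK}^{-3}$ shows that $h$ increases in $\alphaK$ precisely while $\alphaK \leq 4\sMax^2/\kappaS$ (and for all $\alphaK$ when $\kappaS = 0$). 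I would therefore confirm that the range of interest — $\alphaK \in \closedInterval{\alphaMin}{\alphaMax}$, with $h$ evaluated at $\alphaMin$ when \autoref{thm2} is applied — lies within this increasing regime, so that the $h$ above can legitimately serve as the non-decreasing function demanded by the assumption.
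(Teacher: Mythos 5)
Your argument is correct and matches the paper's proof exactly: on a true, successful iteration with $k<\nEps$ the definition \eqref{tmp-2021-12-31-6} gives $\lambdaMin{S_k \hessFK S_k^T} < -\epH$, \autoref{lem:SubHessNegImpStepLower} then lower-bounds $\NsKTSKHat$, and cubing and inserting this into \autoref{succStepDecrease} yields \eqref{h:cubic:secondOrder:subspace}. Your closing remark about verifying monotonicity of $h$ in $\alphaK$ (which holds only for $\alphaK \leq 4\sMax^2/\kappaS$ when $\kappaS>0$) is a legitimate extra check that the paper's one-line proof does not address, and flagging it is a point in your favour rather than a gap in your argument.
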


\begin{proof}
Using \autoref{succStepDecrease}, on successful iterations, we have 
$\fK - \fKPlusOne \geq \frac{\theta}{3\alphaK} \normTwo{S_k^T \sKHat}^3$.
Consequently, $k \leq \nEps$ (note the definition \eqref{tmp-2021-12-31-6} of $\nEps$ in this section)
and \autoref{lem:SubHessNegImpStepLower} give the lower bound $h$ \reply{that} holds
in true, successful and $k \leq \nEps$ iterations.
\end{proof}

\paragraph{Satisfying \autoref{AA5}}

\begin{lemma}
\autoref{alg:R-ARC} satisfies \autoref{AA5}.
\end{lemma}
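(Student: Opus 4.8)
The plan is to recognize that \autoref{AA5} asserts only the monotonic decrease of objective values, $\fK \geq f(\xKPlusOne)$, which is a purely structural property of \autoref{alg:R-ARC}'s accept/reject mechanism and has nothing to do with the criticality measure defining $\nEps$. Consequently the argument is identical to that already given in \autoref{tmp-2022-1-15-1}, and I would simply split on the two possible outcomes of Step 3 of the algorithm. On an unsuccessful iteration the iterate is not updated, so $\xKPlusOne = \xK$ and hence $f(\xKPlusOne) = \fK$, giving the inequality with equality.

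On a successful iteration I would invoke \autoref{succStepDecrease}, which yields $f(\xKPlusOne) \leq \fK - \frac{\theta}{3\alphaK}\normTwo{\SKTransposedsKHat}^3$. Since $\theta \in (0,1)$ and $\alphaK > 0$, the subtracted term is non-negative, so $f(\xKPlusOne) \leq \fK$. Combining the two cases establishes the non-increasing property for every $k \in \N$, which is exactly \autoref{AA5}.

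There is no real obstacle here. The sufficient-decrease test \eqref{eq:cubic:sufficient_decrease} and the parameter/step update in Step 3 are unchanged from \autoref{sec:rarc-convergence}, and crucially \autoref{succStepDecrease} was established without any reference to $\nEps$ or to whether the criticality measure is first- or second-order. The only point requiring minimal care is to avoid smuggling in the first-order definition of $\nEps$; but since the statement of \autoref{AA5} never mentions $\nEps$, this is automatic, and the proof is a one-line reduction to \autoref{tmp-2022-1-15-1}.
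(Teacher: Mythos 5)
Your proposal is correct and matches the paper's own treatment: the paper simply notes that the proof is identical to \autoref{tmp-2022-1-15-1}, i.e.\ the case split on successful versus unsuccessful iterations with \autoref{succStepDecrease} handling the successful case, exactly as you write. Your added observation that nothing in the argument depends on the definition of $\nEps$ or the criticality measure is the (implicit) reason the paper can reuse the earlier lemma verbatim, so there is nothing to correct.
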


The proof of this lemma is identical to \autoref{tmp-2022-1-15-1}.

\paragraph{Convergence result}

Applying \autoref{thm2}, we have a convergence result for \autoref{alg:R-ARC} to a point where the subspace Hessian has approximately non-negative curvature. While the statement is for scaled Gaussian matrices, it is possible to obtain in a similar way, a similar result for other sketching matrices.

\begin{theorem}
    Let $\epH > 0$ and $l \in \N^+ $.
    Let $\cal{S}$ be the distribution of scaled Gaussian matrices $S \in \R^{l \times d}$. 
    Suppose that $f$ is bounded below by $f^*$, twice continuously differentiable with Lipschitz continuous Hessian $\grad^2 f$ (with Lipschitz constant $\LH$).
    Choose $\deltaS = \frac{1}{16}< \frac{c}{(c+1)^2}$ so that
    $
    \sMax = 1 + 
    \frac{ \sqrt{d} + \sqrt{2\log16} }
    { \sqrt{l} }$. 
    Let $h\sim \epsilon_H^{3}$ be defined in \eqref{h:cubic:secondOrder:subspace}, $\alphaLow$ be given in \eqref{eq:alphaLow:CBGN:secondOrder} and
     $\alphaMin = \alphaZero \gammaOne^\newL$ associated with $\alphaLow$, for some $\newL \in \N^+$. Apply \autoref{alg:R-ARC} to minimizing $f$ for $N$ iterations.
	Then for any $\delta_1 \in (0,1)$ with 
    \begin{equation}
        \gDeltaSDeltaOne >0, \nonumber
    \end{equation}
    where 
    \begin{equation}
        \gDeltaSDeltaOne = \nPreFactorTR; \nonumber
    \end{equation}
    if $N\in \N$ satisfies 
    \begin{equation}
        N \geq \gDeltaSDeltaOne \squareBracket{
             \frac{f(x_0) - f^*}{h(\epsilon_H, \alphaZero\gammaOne^{c+\newL}
)}
             + \frac{\newL}{1+c}}, \nonumber
    \end{equation}
    we have that
    \begin{equation}
        \probability{
            \min \{
                k: \lambdaMin{S_k^T \hessFK S_k}  \geq -\epH 
            \} \leq N 
            } \geq 1 - \chernoffLowerExponential. \nonumber
    \end{equation}
\end{theorem}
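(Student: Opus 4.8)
The plan is to recognize this theorem as a direct instantiation of the generic complexity bound \autoref{thm2}, applied to \autoref{alg:R-ARC} with the second-order criticality measure $\nEps = \nTwoEpsH$ of \eqref{tmp-2021-12-31-6} and the true iterations of \autoref{def:true:cubic:subspaceHess}. All of the work therefore reduces to checking that the four structural hypotheses \autoref{AA2}--\autoref{AA5} hold with the stated constants, and that the scalar conditions required by \autoref{thm2} on $\deltaS$ and $\delta_1$ are met; since these have all been established in the preceding lemmas of this section, the proof amounts to collecting them and reading off the conclusion.

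Concretely, I would first invoke \autoref{lem:GaussSMax:CubicSubspace} to obtain \autoref{AA2}: because the true-iteration definition here only demands $\normTwo{S_k} \leq \sMax$, the embedding constant $\deltaS$ is free, and I set $\deltaS = \frac{1}{16}$ with the corresponding $\sMax = 1 + (\sqrt{d} + \sqrt{2\log16})/\sqrt{l}$. The assumed inequality $\deltaS = \frac{1}{16} < \frac{c}{(c+1)^2}$ is then exactly condition \eqref{eqn::deltaSConditionThmTwo} of \autoref{thm2}, while the hypothesis $\gDeltaSDeltaOne > 0$ is the specialization of \eqref{eqn:tmp32}. The remaining assumptions have been verified above: \autoref{AA3} with $\alphaLow$ from \eqref{eq:alphaLow:CBGN:secondOrder}, \autoref{AA4} with $h$ from \eqref{h:cubic:secondOrder:subspace}, and \autoref{AA5} (non-increasing function values).

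The substantive ingredient underlying both \autoref{AA3} and \autoref{AA4} is \autoref{lem:SubHessNegImpStepLower}: before convergence, $k < \nTwoEpsH$ forces $\lambdaMin{S_k \hessFK S_k^T} < -\epH$, which by that lemma yields $\normTwo{S_k^T \sKHat} \geq \epH \squareBracket{\frac{2\sMax^2}{\alphaK} + \kappaS}^{-1} > 0$; this positivity makes the sufficient-decrease ratio well-defined (\autoref{AA3}) and, combined with \autoref{succStepDecrease}, supplies the explicit decrease $h$ (\autoref{AA4}). With the four assumptions in hand, I would apply \autoref{thm2} with $\epsilon$ replaced by $\epH$, the function $h$ of \eqref{h:cubic:secondOrder:subspace}, and $\alphaMin = \alphaZero \gammaOne^\newL$; its conclusion is precisely the stated lower bound on $N$ together with $\probability{\nTwoEpsH \leq N} \geq 1 - \chernoffLowerExponential$, where $1 - \deltaS = \frac{15}{16}$ enters the exponent.

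The main obstacle is not in this final assembly, which is essentially bookkeeping, but was already dispatched in \autoref{lem:SubHessNegImpStepLower}: turning a negative-curvature condition on the reduced Hessian $S_k \hessFK S_k^T$ into a quantitative lower bound on $\normTwo{S_k^T \sKHat}$. This is the second-order analogue of the first-order step-size bound \autoref{stepSizeSubEmbed} and is what ultimately produces the $h \sim \epH^3$ decrease. A secondary point worth confirming is that $h(\epH, \cdot)$ is non-decreasing in $\alphaK$ over the range of step sizes actually visited, so that evaluating it at the smallest relevant value $\alphaZero \gammaOne^{c+\newL} = \gammaOne^c \alphaMin$ gives a valid worst-case bound, as \autoref{AA4} requires.
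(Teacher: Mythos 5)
Your proposal is correct and matches the paper's own (largely implicit) argument: the paper likewise establishes Assumptions \ref{AA2}--\ref{AA5} via Lemma \ref{lem:GaussSMax:CubicSubspace}, the $\alphaLow$ lemma, Lemma \ref{lem:SubHessNegImpStepLower} together with Lemma \ref{succStepDecrease} yielding \eqref{h:cubic:secondOrder:subspace}, and the analogue of Lemma \ref{tmp-2022-1-15-1}, and then simply invokes Theorem \ref{thm2} with $\nEps$ as in \eqref{tmp-2021-12-31-6}. Your side remark about monotonicity of $h(\epH,\cdot)$ in $\alphaK$ is a legitimate point the paper glosses over (it holds for $\alphaK\leq 4\sMax^2/\kappaS$, and trivially when $\kappaS=0$), but it does not change the route.
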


The above result shows that the global convergence rate to a (subspace) second-order critical point
is $\mathcal{O}(\epH^{-3})$, matching full-dimensional corresponding results for cubic regularization in the order of the accuracy $\epH$
and sharing the same problem assumptions.

\subsection{R-ARC convergence to second-order (full space) critical points}

In this section, we show that, if $\cal{S}$ is the distribution of scaled Gaussian matrices, \autoref{alg:R-ARC} will converge to a (full-space) second-order critical point, with a rate matching the standard full-dimensional cubic regularization algorithm.

We define 
\begin{equation}
    \nEps = \nThreeEpH 
= \min \set{k: \lambdaMin{ \hessFK } \geq -\epH } \label{eq:nEps:secondOrderFull}
\end{equation}

The following definition of true iterations assumes
that $\hessFK$ has rank $r\leq d$ (which was not needed for Section 6.1).

\begin{definition}\label{true:cubic:full:secondOrder}
Let $\sMax> 0, \epsOne \in (0,1)$. 
An iteration $k$ is $(\epsOne, \sMax)$-true if the following two conditions hold
\begin{enumerate}
    \item $\normTwo{S_k} \leq \sMax$. 
    \item There exists an eigen-decomposition of $\hessFK 
    = \sum_{i=1}^r \lambda_i u_i u_i^T$ with $\lambda_1 
    \geq \lambda_2 \geq \dots \geq \lambda_r$ and $r\leq d$, such that
    with $w_i = S_k u_i$,
    \begin{align}
        & 1-\epsOne \leq \normTwo{w_r}^2 \leq 1+ \epsOne, \label{gauss:norm:one}\\
        & (w_i^T w_r)^2 \leq 16l^{-1} (1+\epsOne)
        \texteq{ for all $i \neq r$}. \label{gauss:approx:orthogonal}
    \end{align}
\end{enumerate}
\end{definition}
Note that since $S_k \in \R^{l \times d}$ is a (scaled) Gaussian matrix, 
and the set $\set{u_i}$, $i\in \{1,\ldots,r\}$, is orthonormal, 
we have that $\set{w_i}$, $i\in \{1,\ldots,r\}$, are independent Gaussian vectors, 
with entries drawn from $N(0, l^{-1} )$. Equation \eqref{gauss:approx:orthogonal} simply
requires that these high-dimensional Gaussian vectors are approximately 
orthogonal, which is known to hold with high probability \cite{MR3837109}.
We now show that the four assumptions needed for \autoref{thm2} hold, and then apply \autoref{thm2} for this particular
definition of $\nEps$. 

\paragraph{Satisfying \autoref{AA2}}
As before, similarly to Lemma \ref{lem:arbitatratyDeltaS}, we show that each of the two conditions in \autoref{true:cubic:full:secondOrder}
holds with high probability, and then use a union bound argument to show \autoref{AA2} is true. Note that the conditional independence between
iterations is clear here because given $x_k$, whether the iteration is true
or not only depends on the random matrix $S_k$ and is independent of all the previous iterations.

For the first condition in \autoref{true:cubic:full:secondOrder}, \autoref{lem:GaussSMax:CubicSubspace} gives that \autoref{alg:R-ARC} satisfies \autoref{AA2} 
with any $\deltaSTwo \in (0,1)$ and 
\begin{equation}
    \sMax = 1 + \sqrt{\frac{d}{l}} + \sqrt{\frac{2\logOneOverDeltaSTwo}{l}}. \label{tmp-2022-1-14-1}
\end{equation}

\autoref{lem:WrWr} shows that \eqref{gauss:norm:one} holds with high probability.
\begin{lemma} \label{lem:WrWr}
Let $w_i \in \R^l$ with $w_{ij}$ be independently following $N(0, l^{-1})$;
let $\epsOne \in (0,1)$. 
Then, for some problem-independent constant $C$, we have
\begin{equation}
    \probability{ \abs{\normTwo{w_i}^2-1 } \leq \epsOne }
    \geq 1 - 2\pow{e}{-\frac{l\epsOne^2}{C}}. 
    \label{w_r}
\end{equation}

\end{lemma}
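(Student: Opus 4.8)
The plan is to recognise $\normTwo{w_i}^2$ as a normalised chi-squared random variable and then invoke a standard sub-exponential concentration inequality. Setting $g_j := \sqrt{l}\, w_{ij}$, the independence and distributional hypotheses make $g_1, \dots, g_l$ i.i.d.\ standard normal $N(0,1)$, so that
\begin{equation}
    \normTwo{w_i}^2 = \sum_{j=1}^l w_{ij}^2 = \frac{1}{l} \sum_{j=1}^l g_j^2,
\end{equation}
i.e.\ $l\,\normTwo{w_i}^2$ has a chi-squared distribution with $l$ degrees of freedom and $\E[\normTwo{w_i}^2] = 1$. Thus the claim is exactly a two-sided concentration statement for the sample average of the i.i.d.\ variables $g_j^2$ around their common mean $1$.

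First I would record that each centred variable $g_j^2 - 1$ is sub-exponential: the moment generating function of $g_1^2$ is $(1-2t)^{-1/2}$ for $t<1/2$, which furnishes the usual sub-gamma/Bernstein bound on $\tfrac{1}{l}\sum_j (g_j^2-1)$. Applying such an inequality (equivalently, the Laurent--Massart tail bounds for chi-squared variables) yields a two-sided estimate of the form
\begin{equation}
    \probability{ \abs{\normTwo{w_i}^2 - 1} \geq t } \leq 2\,\pow{e}{-c\, l \min\{t^2,\, t\}}
\end{equation}
for an absolute constant $c>0$ and all $t>0$.

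Then, since only the regime $\epsOne \in (0,1)$ is needed, I would specialise to $t = \epsOne$ and use $\min\{\epsOne^2, \epsOne\} = \epsOne^2$, giving $\probability{\abs{\normTwo{w_i}^2 - 1} \geq \epsOne} \leq 2\,\pow{e}{-c\, l \epsOne^2}$. Taking complements and setting $C = 1/c$ produces exactly \eqref{w_r}. The statement concerns a single $w_i$, so no union bound is required at this stage.

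The computation is essentially routine; the only point demanding care is the two-sided nature of the bound. The lower tail of a chi-squared variable is sub-Gaussian while the upper tail is only sub-exponential (heavier), so a priori the two deviation directions carry different exponents, namely $\pow{e}{-c l t^2}$ for the lower tail versus $\pow{e}{-c l t}$ for the large-$t$ upper tail. Restricting to $\epsOne < 1$ is precisely what reconciles these behaviours: in this small-deviation regime both tails are governed by the Gaussian exponent $\pow{e}{-c l \epsOne^2}$. This is the step I would be most careful to justify, and it explains why the hypothesis $\epsOne \in (0,1)$ appears in the statement.
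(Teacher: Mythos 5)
Your proposal is correct and follows essentially the same route as the paper, which simply cites the standard chi-squared concentration argument (Dasgupta--Gupta / Laurent--Massart style, via the moment generating function of $g_j^2$) and notes that the resulting constant is $C \approx 4$. Your observation that the restriction $\epsOne \in (0,1)$ is what collapses the sub-exponential upper tail to the Gaussian exponent $e^{-c\,l\,\epsOne^2}$ is exactly the point of the standard proof.
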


\begin{proof}
The proof is standard, see \cite{MR1943859} or the proof of Lemma 3.3 in \cite{cartis_randomised_2022}.
We note that
$C \approx 4$.
\end{proof}

Next, we show that conditioning on \eqref{gauss:norm:one} being true, \eqref{gauss:approx:orthogonal} holds with high probability. We first study the case of a single fixed $i$ (instead of all $i$).

\begin{lemma} \label{lem:WiWr}
Let $\epsOne \in (0,1)$ and suppose $w_r$ satisfies \eqref{w_r} with $i=r$.
Then, with (conditional) probability at least 0.9999, 
independent of $w_r$,
we have $(w_i^T w_r)^2 \leq 16l^{-1} (1+\epsOne)$.
\end{lemma}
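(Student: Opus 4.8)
The plan is to exploit the conditional Gaussianity of the inner product $w_i^T w_r$ once $w_r$ is held fixed. First I would condition on an arbitrary realization of $w_r$ satisfying \eqref{w_r}, so that $1-\epsOne \leq \normTwo{w_r}^2 \leq 1+\epsOne$; in particular $\normTwo{w_r}>0$ since $\epsOne \in (0,1)$. Writing $w_i^T w_r = \sum_{j=1}^l w_{ij} w_{rj}$ and recalling that the entries $w_{ij}$ are independent $N(0,l^{-1})$ variables independent of $w_r$, the quantity $w_i^T w_r$ is, conditionally on $w_r$, a fixed linear combination of independent centered Gaussians. It is therefore itself Gaussian, with mean zero and variance $\sum_{j=1}^l l^{-1} w_{rj}^2 = l^{-1}\normTwo{w_r}^2$.

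Next I would standardize. Conditionally on $w_r$ we may write $w_i^T w_r = l^{-1/2}\normTwo{w_r}\,Z$ with $Z \sim N(0,1)$, so that $(w_i^T w_r)^2 = l^{-1}\normTwo{w_r}^2 Z^2 \leq l^{-1}(1+\epsOne)Z^2$, where the inequality uses only the upper bound $\normTwo{w_r}^2 \leq 1+\epsOne$ from the conditioning. Hence the target event $\set{(w_i^T w_r)^2 \leq 16l^{-1}(1+\epsOne)}$ is implied by $\set{Z^2 \leq 16} = \set{\abs{Z}\leq 4}$, and it remains only to bound $\probability{\abs{Z}\leq 4}$ from below.

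The final step is the numerical estimate $\probability{\abs{Z}>4} = 2\bracket{1-\Phi(4)} \approx 6.3\times 10^{-5} < 10^{-4}$, so that $\probability{\abs{Z}\leq 4} > 0.9999$. Because this lower bound was derived using only $\normTwo{w_r}^2 \leq 1+\epsOne$, it is the same for every admissible realization of $w_r$, which is precisely the sense in which the conditional probability bound is "independent of $w_r$" in the statement. I do not expect a genuine obstacle here: the only point requiring care is to justify that the conditional law of $w_i^T w_r$ given $w_r$ is exactly $N(0,l^{-1}\normTwo{w_r}^2)$ — which follows at once from the independence of $w_i$ and $w_r$ together with the stability of the Gaussian family under a fixed linear combination — and to observe that the constant $16$ (equivalently the threshold $\abs{Z}\leq 4$) is chosen precisely so as to push the Gaussian tail below $10^{-4}$.
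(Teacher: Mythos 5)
Your proof is correct and follows essentially the same route as the paper's: both condition on $w_r$, observe that $w_i^T w_r$ is then a centered Gaussian with variance $l^{-1}\normTwo{w_r}^2$ (the paper phrases this by noting $w_i^T \frac{w_r}{\normTwo{w_r}}$ is $N(0,l^{-1})$ independent of $w_r$), apply the four-standard-deviation tail bound to get probability at least $0.9999$, and combine with $\normTwo{w_r}^2 \leq 1+\epsOne$. No substantive difference.
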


\begin{proof}
We have $(w_i^T w_r)^2 = 
\normTwo{w_r}^2 \bracket{
w_i^T \frac{w_r}{\normTwo{w_r}} }^2
$. The term inside the 
bracket is an $N(0, l^{-1})$ random variable independent 
of $w_r$, because sum of independent normal random variables
is still normal. Note that for a normal random variable $N(0, \sigma^2)$, 
with probability at least $0.9999$, its absolute value lies within
$\pm 4 \sigma$. Therefore we have that
with probability at least $0.9999$, 
\begin{equation}
    \bracket{
w_i^T \frac{w_r}{\normTwo{w_r}} }^2 \leq 
16 l^{-1}. 
\end{equation}

Combining with \eqref{w_r} gives the result. 
\end{proof}
\autoref{lem:wIwRallIneqR} shows that conditioning on \eqref{gauss:norm:one} being true, \eqref{gauss:approx:orthogonal} is true with high probability.
\begin{corollary}\label{lem:wIwRallIneqR}
Let $\epsOne \in (0,1)$ and suppose $w_r$ satisfies \eqref{w_r} with $i=r$.
Then, with (conditional) probability at least $0.9999^{(r-1)}$, 
we have that 
$(w_i^T w_r)^2 \leq (1+\epsOne)16l^{-1}$ for all $i \neq r$.
\end{corollary}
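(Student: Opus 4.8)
The plan is to obtain the bound for all $i \neq r$ simultaneously by combining the single-index estimate of \autoref{lem:WiWr} with the mutual independence of the Gaussian vectors $\{w_i\}$. Recall from the discussion following \autoref{true:cubic:full:secondOrder} that, because $S_k$ is a scaled Gaussian matrix and $\{u_i\}_{i=1}^r$ is an orthonormal set, the images $w_i = S_k u_i$ are independent Gaussian vectors with entries drawn from $N(0, l^{-1})$.

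First I would condition on the vector $w_r$, which by hypothesis satisfies \eqref{w_r} with $i=r$. Since the collection $\{w_i\}_{i=1}^r$ consists of jointly independent vectors, conditioning on $w_r$ leaves the remaining vectors $\{w_i : i \neq r\}$ mutually independent and independent of $w_r$. Consequently, the events
\[
    E_i := \set{(w_i^T w_r)^2 \leq 16 l^{-1}(1+\epsOne)}, \qquad i \neq r,
\]
are conditionally independent given $w_r$, as each $E_i$ is a measurable function of the single vector $w_i$ together with the conditioned quantity $w_r$.

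Next I would apply \autoref{lem:WiWr} to each fixed index $i \neq r$ separately: it guarantees that, conditional on $w_r$ satisfying \eqref{w_r}, each event $E_i$ has probability at least $0.9999$, and crucially this bound is uniform in $i$ and does not depend on the realisation of $w_r$. There are exactly $r-1$ indices in $\{1,\dots,r\}\setminus\{r\}$, so by the conditional independence established above, the probability that all the $E_i$ hold simultaneously is the product of the individual conditional probabilities, which is at least $0.9999^{(r-1)}$. This is precisely the claim that \eqref{gauss:approx:orthogonal} holds with the stated conditional probability.

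The argument is essentially bookkeeping, and the only step that requires any care is the conditional independence: one must check that conditioning on $w_r$ does not create dependence among the $w_i$ for $i \neq r$. This follows immediately from the joint independence of $\{w_i\}$ noted above, so I do not anticipate a genuine obstacle here — all of the probabilistic content is already carried by the single-index estimate in \autoref{lem:WiWr}, and the corollary merely aggregates it via a product bound over the $r-1$ off-diagonal indices.
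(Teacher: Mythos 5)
Your proposal is correct and takes essentially the same route as the paper: condition on $w_r$, observe that the events $\set{(w_i^T w_r)^2 \leq 16 l^{-1}(1+\epsOne)}$ for $i \neq r$ are conditionally independent (since the $w_i$ are jointly independent Gaussian vectors), and multiply the per-index bound from \autoref{lem:WiWr} over the $r-1$ indices. The paper's proof is a two-line version of exactly this argument; your write-up just makes the conditional-independence step explicit.
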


\begin{proof}
Note that conditioning on $\normTwo{w_r}^2$,  
$w_i^T w_r$ are independent events. 
Therefore we simply multiply the probability. 
\end{proof}

The following lemma shows that the second condition in \autoref{true:cubic:full:secondOrder} is true with high probability.

\begin{lemma}\label{lem:wrAndAllwi}
Let $\epsOne > 0$.
Let $A_1 = \set{\abs{\normTwo{w_r}^2-1}\leq \epsOne}$, 
and $A_2 = \set{\bracket{w_i^T w_r}^2\leq 
16l^{-1}(1+\epsOne)}$, 
for all $i\neq r$. 
Then with probability at least $(0.9999)^{r-1}
\bracket{1-2e^{-\frac{l\epsOne^2}{C}}}$, 
we have that $A_1$ and $A_2$ hold simultaneously.
\end{lemma}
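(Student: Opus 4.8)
The plan is to combine the two preceding results, \autoref{lem:WrWr} and \autoref{lem:wIwRallIneqR}, through the elementary multiplication rule for conditional probability. Writing the target event as the intersection $A_1 \cap A_2$, I would first decompose
\begin{equation}
    \probability{A_1 \cap A_2} = \probability{A_2 \mid A_1} \probability{A_1}, \notag
\end{equation}
so that the two factors can be bounded separately by the lemmas already established, with no fresh estimation required.

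For the factor $\probability{A_1}$, I would invoke \autoref{lem:WrWr} applied with $i=r$: since $A_1$ is exactly the event $\set{\abs{\normTwo{w_r}^2 - 1} \leq \epsOne}$ appearing in \eqref{w_r}, the lemma gives directly $\probability{A_1} \geq 1 - 2e^{-l\epsOne^2/C}$. For the factor $\probability{A_2 \mid A_1}$, I would observe that the hypothesis of \autoref{lem:wIwRallIneqR} — that $w_r$ satisfies \eqref{w_r} — is precisely the conditioning event $A_1$, so the corollary yields $\probability{A_2 \mid A_1} \geq 0.9999^{r-1}$. Substituting both bounds into the product then gives the claimed lower bound $0.9999^{r-1}\bracket{1 - 2e^{-l\epsOne^2/C}}$.

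The only point requiring care — and the closest thing to an obstacle here — is verifying that the conditioning structures match: \autoref{lem:wIwRallIneqR} conditions on $w_r$ through the event \eqref{w_r}, and its proof relies on the vectors $w_i$ for $i \neq r$ being independent of $w_r$. I would confirm that this independence, inherited from the joint Gaussianity of the rows of $S_k$ (the $\set{w_i}$ are independent $N(0, l^{-1})$ vectors, as noted immediately after \autoref{true:cubic:full:secondOrder}), is exactly what licenses treating $\probability{A_2 \mid A_1}$ as a genuine conditional probability to which the corollary's bound applies. Once this identification is in place, the multiplication is immediate and the result follows.
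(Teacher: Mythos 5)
Your proposal is correct and follows exactly the paper's own argument: the paper likewise writes $\probability{A_1 \cap A_2} = \conditionalP{A_2}{A_1}\probability{A_1}$ and bounds the two factors by \autoref{lem:WrWr} and \autoref{lem:wIwRallIneqR} respectively. Your extra remark about the independence of the $w_i$ from $w_r$ justifying the conditional bound is a sensible elaboration of the same reasoning, not a departure from it.
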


\begin{proof}
We have 
$\probability{A_1 \intersect A_2} 
= \conditionalP{A_2}{A_1} \probability{A_1}$.
Using \autoref{lem:WrWr}
and \autoref{lem:wIwRallIneqR} 
gives the result.  
\end{proof}

Therefore, using \eqref{tmp-2022-1-14-1}, \autoref{lem:wrAndAllwi} and the union bound we have the following

\begin{lemma}\label{tmp-2022-1-14-5}
Let $\epsOne > 0$, $l \in \N^+$, $\deltaSTwo > 0$ such that 
\begin{equation}
    \deltaS 
= (0.9999)^{r-1} \bracket{1 - 2e^{-\frac{l \epsOne^2}{C} } } + \deltaSTwo
< 1. \label{tmp-2022-1-14-4}
\end{equation}
Then \autoref{alg:R-ARC} with $(\sMax, \epsOne)$-true iterations
defined in \autoref{true:cubic:full:secondOrder} satisfies \autoref{AA2} where $\sMax$ is defined in \eqref{tmp-2022-1-14-1}.
\end{lemma}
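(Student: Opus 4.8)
The plan is to mirror the proof of \autoref{lem:arbitatratyDeltaS}: fix a realization $x_k = \barXK$, split the definition of a true iteration into its two defining conditions, bound the conditional failure probability of each separately, and combine them with a union bound. Fixing $\barXK$ determines $\hessFK$ together with a choice of orthonormal eigenframe $\set{u_i}$, and hence the vectors $w_i = S_k u_i$ appearing in \autoref{true:cubic:full:secondOrder}. I would introduce the events
\begin{align*}
    B_1 &= \set{\normTwo{S_k} \leq \sMax}, \\
    B_2 &= \set{A_1 \text{ and } A_2 \text{ both hold}},
\end{align*}
with $A_1, A_2$ as in \autoref{lem:wrAndAllwi}. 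By \autoref{true:cubic:full:secondOrder}, iteration $k$ is true precisely when $B_1 \intersect B_2$ occurs, so it suffices to lower bound $\probabilityGivenXK{B_1 \intersect B_2}$.

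For the first condition, \autoref{lem:GaussSMax:CubicSubspace} gives $\probability{B_1} \geq 1 - \deltaSTwo$ with $\sMax$ as in \eqref{tmp-2022-1-14-1}; since $\normTwo{S_k}$ does not depend on $x_k$, conditioning leaves this unchanged, i.e.\ $\probabilityGivenXK{B_1} \geq 1 - \deltaSTwo$. For the second condition, the crucial observation is that conditioning on $x_k = \barXK$ does not alter the law of the relevant quantities: because $S_k$ is a scaled Gaussian matrix and $\set{u_i}$ is orthonormal, the rotation invariance recorded in \autoref{lem:orth:invariance} (equivalently, a one-line covariance computation) shows that $\set{w_i = S_k u_i}$ are independent $N(0, l^{-1}I_l)$ vectors irrespective of which eigenframe $\barXK$ induces. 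Hence \autoref{lem:wrAndAllwi} applies verbatim conditionally, giving $\probabilityGivenXK{B_2} \geq (0.9999)^{r-1}\bracket{1 - 2e^{-\frac{l\epsOne^2}{C}}}$. A union bound on the two complements then bounds the probability that iteration $k$ fails to be true by $\probabilityGivenXK{\complement{B_1}} + \probabilityGivenXK{\complement{B_2}} \leq \deltaSTwo + \bracket{1 - (0.9999)^{r-1}\bracket{1 - 2e^{-\frac{l\epsOne^2}{C}}}}$, which is the constant $\deltaS$ fixed in \eqref{tmp-2022-1-14-4}; thus $\probabilityGivenXK{T_k} \geq 1 - \deltaS$. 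The iteration $k=0$ is identical, using that $x_0$ is a fixed deterministic point, so $\probability{T_0} \geq 1 - \deltaS$.

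It remains to verify the conditional-independence clause of \autoref{AA2}, which is immediate once the above is in place: given $x_k = \barXK$, both $B_1$ and $B_2$ are measurable with respect to $S_k$ alone, and $S_k$ is drawn afresh at iteration $k$ independently of $S_0, \dots, S_{k-1}$, so $T_k$ is conditionally independent of $T_0, \dots, T_{k-1}$ given $x_k$. The only genuinely nonroutine point is the invariance step --- confirming that the conditional probability of the eigenframe-dependent event $B_2$ equals its unconditional value --- since everything downstream (the two probability bounds and their union) is pure assembly of \autoref{lem:GaussSMax:CubicSubspace} and \autoref{lem:wrAndAllwi}. I would therefore devote the bulk of the write-up to making the reduction of $\set{w_i}$ to independent Gaussian vectors explicit, and treat the union bound as a one-line consequence.
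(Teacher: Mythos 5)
Your proposal is correct and follows essentially the same route the paper takes: it states that \autoref{lem:GaussSMax:CubicSubspace}, \autoref{lem:wrAndAllwi} and a union bound over the two conditions of \autoref{true:cubic:full:secondOrder} give the result, with conditional independence holding because, given $x_k$, truth of the iteration depends only on the freshly drawn $S_k$. One small caution: your union bound correctly yields a failure probability of $\deltaSTwo + \bracket{1 - (0.9999)^{r-1}\bracket{1 - 2e^{-l\epsOne^2/C}}}$, which is \emph{not} literally the expression printed in \eqref{tmp-2022-1-14-4}; that displayed formula appears to contain a typo (it gives $\deltaS \approx 1$ precisely when iterations are true with high probability, and could then never satisfy $\deltaS < c/(c+1)^2$ as required later), so your version is the one that should be asserted rather than claiming agreement with the printed constant.
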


\paragraph{Satisfying \autoref{AA3}}

\begin{lemma}\label{tmp-2022-1-14-6}
Let $f$ be twice continuously differentiable with $\LH$-Lipschitz continuous Hessian. Then
\autoref{alg:R-ARC} with true iterations defined in 
\autoref{true:cubic:full:secondOrder} satisfies \autoref{AA3} with
\begin{equation}
    \alphaLow = \frac{2(1-\theta)}{\LH} \label{eq:alphaLow:CBGN:secondOrder:1}
\end{equation}

\end{lemma}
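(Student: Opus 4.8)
The plan is to follow the proof of \autoref{lem:cubic:sub:A2} essentially verbatim, observing that the implication ``$\alphaK$ small $\Rightarrow$ iteration successful'' rests only on the Lipschitz continuity of $\grad^2 f$ and the cubic model decrease, and is insensitive to which criticality measure defines $\nEps$ or how true iterations are defined. The single place where the present full-space, second-order setting demands a new argument is in verifying that the trial step does not vanish, i.e. $\NsKTSKHat > 0$, on a true iteration with $k < \nEps$; this positivity is what makes the ratio $\rho_k$ well defined, since by \eqref{tmp:CBGN:3} the denominator satisfies $\fK - \qKHat{\sKHat} \geq \frac{1}{3\alphaK}\NsKTSKHat^3$, which is strictly positive exactly when the step is nonzero.

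Granting $\NsKTSKHat > 0$, I would reproduce the estimate of \autoref{lem:cubic:sub:A2}: Corollary A.8.4 of \cite{cartis_evaluation_2022} bounds the numerator of $\abs{1-\rho_k}$ by $\frac{1}{6}\LH\normTwo{s_k}^3$ with $s_k = S_k^T\sKHat$, and dividing by the denominator above gives $\abs{1-\rho_k} \leq \frac{1}{2}\alphaK\LH \leq 1-\theta$ as soon as $\alphaK \leq \alphaLow = \frac{2(1-\theta)}{\LH}$ from \eqref{eq:alphaLow:CBGN:secondOrder:1}. Hence $\rho_k \geq \theta$, the sufficient-decrease test \eqref{eq:cubic:sufficient_decrease} holds, and iteration $k$ is successful.

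It therefore remains to establish $\NsKTSKHat > 0$. Because $S_k$ has full column rank almost surely when $l \leq d$, $\NsKTSKHat = 0$ forces $\sKHat = 0$; at the origin the cubic regularization term behaves like $\normTwo{\cdot}^3$ and hence has vanishing Hessian, so $\grad^2\mKHat{0} = S_k\hessFK S_k^T$ and the termination condition \eqref{hessMKGeq0} collapses to $S_k\hessFK S_k^T \succeq 0$. Thus it suffices to prove $\lambdaMin{S_k\hessFK S_k^T} < 0$, which rules out $\sKHat = 0$. To this end I would transfer negative curvature from the full to the reduced Hessian using \autoref{true:cubic:full:secondOrder}: since $k < \nEps$, definition \eqref{eq:nEps:secondOrderFull} gives $\lambda_r = \lambdaMin{\hessFK} < -\epH$, and writing $S_k\hessFK S_k^T = \sum_{i=1}^r\lambda_i w_iw_i^T$ with $w_i = S_k u_i$, the Rayleigh quotient along $w_r$ reads $w_r^T(S_k\hessFK S_k^T)w_r = \lambda_r\normTwo{w_r}^4 + \sum_{i\neq r}\lambda_i(w_i^T w_r)^2$. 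The leading term is at most $-\epH(1-\epsOne)^2$ by \eqref{gauss:norm:one}, while each cross term obeys $(w_i^T w_r)^2 \leq 16l^{-1}(1+\epsOne)$ by \eqref{gauss:approx:orthogonal}.

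I expect the cross terms to be the main obstacle. Although every $(w_i^T w_r)^2$ is of order $1/l$, the sum $\sum_{i\neq r}\lambda_i(w_i^T w_r)^2$ ranges over as many as $r-1$ indices and may accumulate positive contributions from large eigenvalues $\lambda_i$; to guarantee that it is dominated by the gap $\epH(1-\epsOne)^2$ one needs the sketch dimension $l$ large relative to $r$ and to the spread of the spectrum (equivalently, relative to a uniform bound on $\normTwo{\hessFK}$ measured against $\epH$). Once this quantitative control is in place, $w_r^T(S_k\hessFK S_k^T)w_r < 0$, hence $\lambdaMin{S_k\hessFK S_k^T} < 0$, the step is nonzero, and the Lipschitz argument of the second paragraph closes the proof. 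The very same negative-curvature transfer, carried out with explicit constants, is what subsequently feeds \autoref{lem:SubHessNegImpStepLower} to deliver the quantitative step-size lower bound needed for \autoref{AA4}.
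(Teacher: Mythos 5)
Your proposal matches the paper's route: the paper proves this lemma by repeating the argument of \autoref{lem:cubic:sub:A2} verbatim and notes (in a footnote) that the only new ingredient is $\normTwo{S_k^T \sKHat}>0$ on true iterations before convergence, which it obtains exactly as you do --- by transferring negative curvature to $S_k\hessFK S_k^T$ via the Rayleigh quotient along $w_r$ (\autoref{lem:lambdaMinSketchedHess}) and then invoking \autoref{lem:SubHessNegImpStepLower} to get the quantitative bound \eqref{tmptmp}. The cross-term accumulation you flag as the main obstacle is precisely what the paper controls through the explicit hypothesis \eqref{tmp-2022-1-14-3}, so your argument is correct and essentially identical to the paper's.
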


The proof is identical to \autoref{lem:cubic:sub:A2}\footnote{Note that we need to show that
$\normTwo{S_k^T \sKHat} > 0$ in true iterations before convergence, which is shown in \eqref{tmptmp}.}.

\paragraph{Satisfying \autoref{AA4}}

\autoref{lem:lambdaMinSketchedHess} is a key ingredient. It shows that in true iterations, the subspace Hessian's negative curvature ($\lambdaMin{S_k \hessFK S_k^T}$) is proportional to the full Hessian's negative curvature ($\lambdaMin{\hessFK}$). 
\begin{lemma} \label{lem:lambdaMinSketchedHess}
Suppose iteration $k$ is true with $\epSOne \in (0,1)$ and $k < \nEps$. 
Let $\kappa_H = \min\set{0, \lambda_1/\lambda_r}$. Suppose 
\begin{equation}
    1 - \epsOne + 16\frac{r-1}{l}\frac{1+\epsOne}{1-\epsOne}\frac{\lambda_1}{\lambda_r} \geq 0. \label{tmp-2022-1-14-3}
\end{equation}
Then 
we have that
\begin{equation}
    \lambdaMin{S_k \hessFK S_k^T} \leq
    -\epH m(\epsOne, r, l, \kappa_H), \notag
\end{equation}

where
\begin{equation}
    m(\epsOne, r, l, \kappa_H) = 
    \bracket{1-\epsOne
    + 16\frac{r-1}{l}\frac{1+\epsOne}{1-\epsOne}\kappa_H
    }. \label{eq:mDef}
\end{equation}

\end{lemma}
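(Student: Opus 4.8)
The plan is to bound $\lambdaMin{S_k\hessFK S_k^T}$ from above by evaluating the Rayleigh quotient of the reduced Hessian at a single, well-chosen test vector, namely $w_r = S_k u_r$, the image under $S_k$ of the eigenvector $u_r$ carrying the most negative curvature of $\hessFK$. Intuitively, a true iteration guarantees that $S_k$ nearly preserves the length of this direction and keeps it nearly orthogonal to the images of the other eigenvectors, so the reduced Hessian should retain comparable negative curvature.

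First I would record the consequence of $k < \nEps$. Since $\nEps = \nThreeEpH$ in \eqref{eq:nEps:secondOrderFull} is the first iteration at which $\lambdaMin{\hessFK} \geq -\epH$, for $k < \nEps$ we have $\lambdaMin{\hessFK} < -\epH$; as the eigenvalues of $\hessFK$ are $\lambda_1 \geq \dots \geq \lambda_r$ together with zeros, this forces $\lambda_r = \lambdaMin{\hessFK} < -\epH < 0$. Next, using $\normTwo{w_r} > 0$ (guaranteed by \eqref{gauss:norm:one}) and the variational characterization of the minimum eigenvalue with the normalized vector $y = w_r/\normTwo{w_r}$, and inserting the eigendecomposition $\hessFK = \sum_{i=1}^r \lambda_i u_i u_i^T$ together with $u_i^T S_k^T w_r = w_i^T w_r$, I would obtain
\begin{equation}
\lambdaMin{S_k \hessFK S_k^T} \leq \lambda_r \normTwo{w_r}^2 + \frac{1}{\normTwo{w_r}^2}\sum_{i \neq r}\lambda_i (w_i^T w_r)^2. \nonumber
\end{equation}

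I would then bound the two terms using the true-iteration conditions. Since $\lambda_r < 0$, the lower bound $\normTwo{w_r}^2 \geq 1-\epsOne$ in \eqref{gauss:norm:one} gives $\lambda_r\normTwo{w_r}^2 \leq \lambda_r(1-\epsOne)$. For the cross terms I would use $\lambda_i \leq \max\{0,\lambda_1\}$ for every $i$, the near-orthogonality bound \eqref{gauss:approx:orthogonal} on each of the $r-1$ summands, and $\normTwo{w_r}^2 \geq 1-\epsOne$ once more, producing an upper bound proportional to $\max\{0,\lambda_1\}$. The key algebraic observation is that, because $\lambda_r < 0$, the definition $\kappa_H = \min\{0,\lambda_1/\lambda_r\}$ yields the identity $\max\{0,\lambda_1\} = \lambda_r\kappa_H$; substituting this lets the whole right-hand side factor as $\lambda_r\,\mRLEpsKappa$ with $m$ as in \eqref{eq:mDef}. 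Finally, hypothesis \eqref{tmp-2022-1-14-3} ensures $\mRLEpsKappa \geq 0$, so multiplying the strict inequality $\lambda_r < -\epH$ by this nonnegative factor preserves its direction and gives $\lambdaMin{S_k\hessFK S_k^T} \leq \lambda_r\,\mRLEpsKappa \leq -\epH\,\mRLEpsKappa$, as required.

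The main obstacle is the sign bookkeeping around the top eigenvalue $\lambda_1$: it may be positive or negative, and the off-diagonal contributions $\lambda_i(w_i^T w_r)^2$ must be controlled uniformly in either regime. This is precisely what the definition of $\kappa_H$ is engineered to handle, collapsing the two cases through $\max\{0,\lambda_1\} = \lambda_r\kappa_H$; the remaining care is to ensure the concluding step multiplies the negative quantity $\lambda_r$ by a guaranteed-nonnegative $m$, which is exactly where condition \eqref{tmp-2022-1-14-3} enters.
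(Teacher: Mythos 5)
Your proposal is correct and follows essentially the same route as the paper: bound $\lambdaMin{S_k \hessFK S_k^T}$ via the Rayleigh quotient at the test vector $w_r = S_k u_r$, then apply \eqref{gauss:norm:one} and \eqref{gauss:approx:orthogonal} to the diagonal and cross terms respectively. The only (cosmetic) difference is that you collapse the paper's two-case analysis on the sign of $\lambda_1$ into a single computation via the identity $\max\{0,\lambda_1\} = \lambda_r \kappa_H$, which is a clean and valid streamlining.
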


\begin{proof}
Using the eigen-decomposition of $\hessFK$, we have that
$S_k \hessFK S_k^T = \sum_{i=1}^r \lambda_i w_i w_i^T$. We now 
use the Rayleigh quotient expression of minimal 
eigenvalue (with $w_r$ being the trial vector):
\begin{equation}
    \lambdaMin{S_k \hessFK S_k^T} \leq  
\frac{\sum_{i=1}^r \lambda_i \bracket{w_i^T w_r}^2}{
w_r^T w_r} \notag
\end{equation}
Furthermore,
\begin{align}
    & \frac{\sum_{i=1}^r \lambda_i \bracket{w_i^T w_r}^2}{
    w_r^T w_r} 
     = \bracket{w_r^T w_r }\lambda_r +
    \frac{\sum_{i=1}^{r-1} \lambda_i \bracket{w_i^T w_r}^2}{
    w_r^T w_r} \notag\\
    & \leq \bracket{1-\epsOne}\lambda_r + 
    \lambda_1 \frac{\sum_{i=1}^{r-1} \bracket{w_i^T w_r}^2}{
    w_r^T w_r} 
     \leq \bracket{1-\epsOne}\lambda_r + 
    16\frac{r-1}{l}\frac{1+\epsOne}{1-\epsOne}\lambda_1
    ,\label{tmp-2022-1-14-2}
\end{align}
where the two inequalities follow from \eqref{gauss:norm:one}
    and \eqref{gauss:approx:orthogonal} because iteration $k$ is true.
Next we discuss two cases. 

\begin{enumerate}
    \item If $\lambda_1 < 0$, then $\kappa_H = 0$ because $\lambda_r < -\epH < 0$. Thus, $m(\epsOne, r, l, \kappa_H) = 1-\epsOne$. The desired result follows from \eqref{tmp-2022-1-14-2} by noting that the second term $16\frac{r-1}{l}\frac{1+\epsOne}{1-\epsOne}\lambda_1 < 0$ and $\lambda_r < -\epH$.
    \item If $\lambda_1 \geq 0$, then $\kappa_H = \frac{\lambda_1}{\lambda_r}$ and from \eqref{tmp-2022-1-14-2}, we have
        \begin{align*}
            \bracket{1-\epsOne}\lambda_r + 
            16\frac{r-1}{l}\frac{1+\epsOne}{1-\epsOne}\lambda_1
            & = \lambda_r \bracket{1 - \epsOne + 16\frac{r-1}{l}\frac{1+\epsOne}{1-\epsOne}\frac{\lambda_1}{\lambda_r}} \\
            & \leq -\epH  m(\epsOne, r, l, \kappa_H),
        \end{align*}
\end{enumerate}
where we used $\eqref{tmp-2022-1-14-3}$ and $\lambda_r < -\epH$ to derive the inequality. 
And the desired result follows. 
\end{proof}

\begin{remark}
Note that \eqref{tmp-2022-1-14-3} always holds if $\lambda_1 \leq 0$ (where recall that $\lambda_i$ are eigenvalues of $\hessFK$ and $k < \nEps$ implies $\lambda_r<-\epH < 0$.). If  $\lambda_1 >0$, then \eqref{tmp-2022-1-14-3} holds if we have $\kappa\bracket{\hessFK} \frac{r-1}{l} \leq \frac{(1-\epsOne)^2}{16(1+\epsOne)}$ where $\kappa\bracket{\hessFK} = \abs{\frac{\lambda_1}{\lambda_r}}$ is the condition number of $\hessFK$.
\end{remark}

We conclude that \autoref{AA4} is satisfied. 
\begin{lemma} \label{tmp-2022-1-14-7}
\autoref{alg:R-ARC} with $\cal{S}$ being the distribution of scaled Gaussian
matrices, true iterations defined in \autoref{true:cubic:full:secondOrder}
and $\nEps$ defined in \eqref{eq:nEps:secondOrderFull} satisfies
\autoref{AA4} with
\begin{equation}
    h(\epH, \alphaK) = 
    \frac{\theta \epH^3 m(\epsOne, r, l, \kappa_H)^3}{3\alphaK} 
\squareBracket{\frac{2\sMax^2}{\alphaK} + \kappaS}^{-3}. 
     \label{h:cubic:secondOrder:fullspace}
\end{equation}
\end{lemma}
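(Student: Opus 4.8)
The plan is to chain the three preceding results—\autoref{lem:lambdaMinSketchedHess}, \autoref{lem:SubHessNegImpStepLower} and \autoref{succStepDecrease}—so that negative curvature of the \emph{full} Hessian at $x_k$ propagates first into the subspace Hessian, then into a lower bound on the step length, and finally into the guaranteed objective decrease required by \autoref{AA4}. Throughout I fix an iteration $k$ that is true in the sense of \autoref{true:cubic:full:secondOrder} and successful, with $k < \nEps$ where $\nEps$ is now defined in \eqref{eq:nEps:secondOrderFull}; in particular $\lambdaMin{\hessFK} < -\epH$.

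First I would invoke \autoref{lem:lambdaMinSketchedHess}. Since iteration $k$ is true, both \eqref{gauss:norm:one} and \eqref{gauss:approx:orthogonal} hold for the eigenbasis of $\hessFK$, and—under the curvature condition \eqref{tmp-2022-1-14-3}—the lemma yields $\lambdaMin{S_k \hessFK S_k^T} \leq -\epH\, m(\epsOne, r, l, \kappa_H)$, transferring the full-space negative curvature into the reduced Hessian with the multiplicative loss $m$ defined in \eqref{eq:mDef}. Thus the subspace Hessian exhibits negative curvature of magnitude at least $\epH\, m(\epsOne, r, l, \kappa_H)$. Next I would apply \autoref{lem:SubHessNegImpStepLower}, but with the effective threshold $\epH\, m(\epsOne, r, l, \kappa_H)$ in place of $\epH$: its proof bounds $\lambdaMin{S_k \hessFK S_k^T}$ below by $-\bracket{\frac{2\sMax^2}{\alphaK} + \kappaS}\normTwo{S_k^T \sKHat}$ using only \eqref{hessMKGeq0} and $\normTwo{S_k} \leq \sMax$, so any upper bound $-\epH m$ on that eigenvalue translates verbatim into
\begin{equation*}
    \normTwo{S_k^T \sKHat} \geq \epH\, m(\epsOne, r, l, \kappa_H) \squareBracket{\frac{2\sMax^2}{\alphaK} + \kappaS}^{-1} > 0 .
\end{equation*}
The strict positivity here is also exactly the nondegeneracy $\normTwo{S_k^T \sKHat} > 0$ needed in the companion \autoref{AA3} proof (\autoref{tmp-2022-1-14-6}). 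Finally, on a successful iteration \autoref{succStepDecrease} gives $\fK - \fKPlusOne \geq \frac{\theta}{3\alphaK}\normTwo{S_k^T \sKHat}^3$; cubing the step bound and substituting produces precisely $h(\epH, \alphaK)$ as in \eqref{h:cubic:secondOrder:fullspace}.

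Since the only genuinely new ingredient, \autoref{lem:lambdaMinSketchedHess}, is already established, this lemma is largely bookkeeping: the real work is the curvature-transfer step, and everything downstream is a mechanical composition. The point deserving the most care is the re-application of \autoref{lem:SubHessNegImpStepLower} with the rescaled threshold $\epH m$ rather than $\epH$; I would make explicit that its conclusion scales linearly in the curvature magnitude, so substituting $\epH \mapsto \epH m$ is legitimate. I would also confirm the remaining structural requirements of \autoref{AA4}: non-negativity and strict positivity of $h$ when $\epH, \alphaK > 0$ hold once $m(\epsOne, r, l, \kappa_H) > 0$, which \eqref{tmp-2022-1-14-3} guarantees, and I would verify the monotonicity in the two arguments directly from the explicit form of $h$ in \eqref{h:cubic:secondOrder:fullspace} (the dependence on $\epH$ is through the increasing factor $\epH^3$, while the dependence on $\alphaK$ should be checked from the rearranged expression proportional to $\alphaK^2\bracket{2\sMax^2 + \kappaS\alphaK}^{-3}$, using $\alphaK \leq \alphaMax$ if needed).
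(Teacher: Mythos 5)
Your proposal is correct and follows essentially the same route as the paper: invoke \autoref{lem:lambdaMinSketchedHess} to transfer the full-space negative curvature to the subspace Hessian, apply \autoref{lem:SubHessNegImpStepLower} with the rescaled threshold $\epH\, m(\epsOne, r, l, \kappa_H)$ to bound the step length below, and conclude via \autoref{succStepDecrease}. The additional checks you flag (positivity of $m$ under \eqref{tmp-2022-1-14-3} and the structural requirements of \autoref{AA4}) are sensible care beyond what the paper writes out, but do not change the argument.
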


\begin{proof}
Let iteration $k$ be true and successful with $k < \nEps$. 
\autoref{lem:lambdaMinSketchedHess} gives that 
\begin{equation}
    \lambdaMin{S_k \hessFK S_k^T} \leq
    -\epH m(\epsOne, r, l, \kappa_H). \notag
\end{equation}
Then we have 
\begin{equation}
    \normTwo{S_k^T \sKHat}
    \geq 
    \epH m(\epsOne, r, l, \kappa_H)
    \squareBracket{\frac{2\sMax^2}{\alphaK} + \kappaS}^{-1}, \label{tmptmp}
\end{equation}
by applying \autoref{lem:SubHessNegImpStepLower} with
$\epH := \epH m(\epsOne, r, l, \kappa_H)$.
The desired result follows by applying \autoref{succStepDecrease}.
\end{proof}

\paragraph{Satisfying \autoref{AA5}}
The proof is identical to the one in last section because \autoref{AA5} is not affected by the change of definitions of $\nEps$ and true iterations.

\paragraph{Convergence of \autoref{alg:R-ARC} to a second-order (full-space) critical point}
Applying \autoref{thm2}, the next theorem shows that using \autoref{alg:R-ARC} with scaled Gaussian matrices
achieves convergence to a second-order critical point, with a worst-case rate matching the classical full-dimensional method. 

  Let $\epH > 0$ and $l \in \N^+ $.
    Let $\cal{S}$ be the distribution of scaled Gaussian matrices $S \in \R^{l \times d}$. 
    Suppose that $f$ is bounded below by $f^*$, twice continuously differentiable with Lipschitz continuous Hessian $\grad^2 f$ (with Lipschitz constant $\LH$).
    Choose $\deltaS = \frac{1}{16}< \frac{c}{(c+1)^2}$ so that
    $
    \sMax = 1 + 
    \frac{ \sqrt{d} + \sqrt{2\log16} }
    { \sqrt{l} }$. 
    Let $h\sim \epsilon_H^{3}$ be defined in \eqref{h:cubic:secondOrder:subspace}, $\alphaLow$ be given in \eqref{eq:alphaLow:CBGN:secondOrder} and
     $\alphaMin = \alphaZero \gammaOne^\newL$ associated with $\alphaLow$, for some $\newL \in \N^+$. Apply \autoref{alg:R-ARC} to minimizing $f$ for $N$ iterations.
	Then for any $\delta_1 \in (0,1)$ with 

\begin{theorem}\label{thm:second_order_fullspace}
In \autoref{alg:R-ARC}, let $\cal{S}$ be the distribution of scaled Gaussian matrices $S \in \R^{l \times d}$ and  let $\epH > 0$.  
Suppose $f$ is  bounded below by $f^*$, twice continuously differentiable with Lipschitz continuous Hessian $\grad^2 f$ (with Lipschitz constant $\LH$) and $\hessFK$ has rank at most $r$ for all $k$.    Choose $
    \deltaSTwo = \frac{1}{16};
    $ so that
    $
    \sMax = 1 + 
    \frac{ \sqrt{d} + \sqrt{2\log16} }
    { \sqrt{l} }$. 
    Let $\deltaS$ be defined in \eqref{tmp-2022-1-14-4} and assume that
   $ \delta_S <\frac{c}{(c+1)^2}$.
    Let $h\sim \epH^3$ be defined in \eqref{h:cubic:secondOrder:fullspace}, $\alphaLow$ be given in \eqref{eq:alphaLow:CBGN:secondOrder:1} and
     $\alphaMin = \alphaZero \gammaOne^\newL$ associated with $\alphaLow$,  for some $\newL \in \N^+$. Apply \autoref{alg:R-ARC} to minimizing $f$ for $N$ iterations.
	Then for any $\delta_1 \in (0,1)$ with 
    \begin{equation}
        \gDeltaSDeltaOne >0, \nonumber
    \end{equation}
    where 
    \begin{equation}
        \gDeltaSDeltaOne = \nPreFactorTR \nonumber;
    \end{equation}
    if $N$ satisfies 
    \begin{equation}
        N \geq \gDeltaSDeltaOne \squareBracket{
             \frac{f(x_0) - f^*}{h(\epsilon_H, \alphaZero\gammaOne^{c+\newL}
)}
             + \frac{\newL}{1+c}}, \nonumber
    \end{equation}
    we have that
    \begin{equation}
        \probability{
            \min \{
                k: \lambdaMin{ \hessFK }  \geq -\epH 
            \} \leq N 
            } \geq 1 - \chernoffLowerExponential. \nonumber
    \end{equation}
\end{theorem}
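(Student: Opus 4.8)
The plan is to exploit that \autoref{alg:R-ARC} is a concrete instance of the generic \autoref{alg:generic}, so that the stated complexity and probability bounds follow directly from the master convergence result \autoref{thm2} once its four hypotheses \autoref{AA2}--\autoref{AA5} are checked for the present convergence criterion $\nEps$ in \eqref{eq:nEps:secondOrderFull} and the true-iteration notion in \autoref{true:cubic:full:secondOrder}. The proof is therefore mostly an assembly step: the substantive analysis has already been carried out in the preceding lemmas of this subsection.

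First I would record the four assumption checks. \autoref{AA2} is established in \autoref{tmp-2022-1-14-5}, with $\deltaS$ built as in \eqref{tmp-2022-1-14-4} and the stated $\sMax$; the choice $\deltaSTwo = \tfrac{1}{16}$ fixes $\sMax$, and the remaining contribution to $\deltaS$ comes from the approximate-orthogonality events. \autoref{AA3} is established in \autoref{tmp-2022-1-14-6} with $\alphaLow$ from \eqref{eq:alphaLow:CBGN:secondOrder:1}, \autoref{AA4} in \autoref{tmp-2022-1-14-7} with the decrease function $h$ in \eqref{h:cubic:secondOrder:fullspace}, and \autoref{AA5} follows verbatim as in \autoref{tmp-2022-1-15-1}. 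With \autoref{AA3} in force, \autoref{lem::alphaMin} produces the integer $\newL \in \N^+$ and the quantity $\alphaMin = \alphaZero\gammaOne^\newL \leq \alphaLow$ that appear in the iteration bound.

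Next I would verify the scalar hypotheses of \autoref{thm2}. The numerical condition \eqref{eqn::deltaSConditionThmTwo}, namely $\deltaS < c/(c+1)^2$, is imposed directly in the statement; given this, for any $\delta_1 \in (0,1)$ with $\gDeltaSDeltaOne = \nPreFactorTR > 0$ the conclusion of \autoref{thm2} applies. Substituting the present $h$ and $\nEps$, and rewriting the event $\set{N \geq \nEps}$ as $\set{\min\set{k : \lambdaMin{\hessFK} \geq -\epH} \leq N}$, yields exactly the claimed bound $\probability{\cdot} \geq 1 - \chernoffLowerExponential$. Reading off that $h \sim \epH^3$ in \eqref{h:cubic:secondOrder:fullspace} then gives the advertised $\mathcal{O}(\epH^{-3})$ rate.

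The genuine difficulty is not in this final assembly but sits upstream in establishing \autoref{AA4}: the crux is \autoref{lem:lambdaMinSketchedHess}, which transfers negative curvature of the full Hessian to the sketched Hessian $S_k\hessFK S_k^T$ through the approximate-orthogonality estimate \eqref{gauss:approx:orthogonal} (itself built from \autoref{lem:WrWr} and \autoref{lem:wrAndAllwi}), introducing the reduction factor $m(\epsOne, r, l, \kappa_H)$. Within the theorem proper, the one point requiring care is consistency of the hypothesis $\deltaS < c/(c+1)^2$ with the construction \eqref{tmp-2022-1-14-4}: the term $2e^{-l\epsOne^2/C}$ forces $l$ to be taken large enough, while the factor $(0.9999)^{r-1}$ shows how the rank bound $r$ enters, so that $\deltaS$ can indeed be pushed below $c/(c+1)^2$ rather than the requirement being vacuous. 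I would also double-check that evaluating $h$ at $\alphaZero\gammaOne^{c+\newL}$ is legitimate, i.e. that $h$ in \eqref{h:cubic:secondOrder:fullspace} is non-decreasing in $\alphaK$ on the relevant range, as \autoref{AA4} demands.
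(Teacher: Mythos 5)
Your proposal matches the paper's proof exactly: the paper simply invokes \autoref{tmp-2022-1-14-5}, \autoref{tmp-2022-1-14-6} and \autoref{tmp-2022-1-14-7} (plus the unchanged \autoref{AA5} argument) to verify the four hypotheses of \autoref{thm2} for $\nEps$ as in \eqref{eq:nEps:secondOrderFull} and true iterations as in \autoref{true:cubic:full:secondOrder}, then applies \autoref{thm2}. Your additional sanity checks (consistency of $\deltaS < c/(c+1)^2$ with \eqref{tmp-2022-1-14-4}, monotonicity of $h$ in $\alphaK$) go slightly beyond what the paper writes but do not change the argument.
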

\begin{proof}
Applying \autoref{tmp-2022-1-14-5}, \autoref{tmp-2022-1-14-6}, \autoref{tmp-2022-1-14-7},  and recalling that $\nEps$ is defined in \eqref{eq:nEps:secondOrderFull} and
true iterations  in \autoref{true:cubic:full:secondOrder}, we have that the four assumptions in \autoref{thm2} are satisfied. Then applying \autoref{thm2} gives the desired result.
\end{proof}

\paragraph{Application to low-rank functions} Similarly to \autoref{cor:low_rank_first_convergence} for first order convergence, we can use the Hessian rank bound for low-rank functions to apply \autoref{thm:second_order_fullspace} to low-rank functions. If $f$ is a low-rank function of rank $r\leq d$, then the Hessian at $x_k$ has rank at most $r$ and so  \autoref{thm:second_order_fullspace} applies.

\section{Numerical Experiments}

In this section, we test the performance of R-ARC and R-ARC-D compared to ARC. We use the CUTEst collection of test problems \cite{gould2015cutest}. As well as problem-by-problem experiments, we measure algorithm performance using data profiles \cite{moreBenchmarkingDerivativeFreeOptimization2009}, which themselves are a variant of performance profiles \cite{dolan2002benchmarking}. We use \textit{relative Hessians seen}\footnote{The relative Hessians seen by an iteration $k$ is $(l_{k}/d)^{2}$ where $l_{k}$ is the sketch dimension and $d$ is the problem dimension.} as well as runtime for our data profiles, maintaining the notation in \cite{cartis_randomised_2022}, for a given solver $s$, test problem $p \in \mathcal{P}$ and tolerance $\tau \in (0, 1)$, we determine the number of relative Hessians seen $N_{p}(s, \tau)$ required for a problem to be solved:
\begin{equation*}
    N_{p}(s, \tau) :=\ \text{\# of relative Hessians seen until}\ f(x_{k}) \leq f(x^{*}) + \tau (f(x_{0}) - f(x^{*})).
\end{equation*}
We set $N_{p}(s, \tau) = \infty$ in the cases where the tolerance is not reached within the maximum number of iterations, which we take to be 2000.

To produce the data profiles, we plot
\begin{equation*}
    \pi_{s, \tau}^{N}(\alpha):= \frac{|\{p \in \mathcal{P}\ :\ N_{p}(s, \tau) \leq \alpha\}|}{|\mathcal{P}|}\ \text{for}\ \alpha \in [0, 100],
\end{equation*}
namely, the fraction of problems solved after $\alpha$ relative Hessians seen. For runtime data profiles, we replace relative Hessians seen with the runtime in the above definitions.

We primarily test R-ARC and R-ARC-D on a set of 28 problems from the CUTEst collection \cite{Lindon22, cartis_randomised_2022}, each of dimension approximately $d = 1000$, we refer to these as \textit{full-rank problems}; the problem information can be found in \autoref{tab:cutest_fullrank}. Additionally, we testR-ARC and R-ARC-D on a set of \textit{low-rank problems}; these are CUTEst problems of dimension approximately $d = 100$, that have had their dimension increased to $1000$ through adding artificial variables. Additional details can be found in the Appendix.  

The ARC code used is from adapted from the implementation in \cite{cartisEfficientImplementationThirdorder2024}\footnote{Where ARC is referred to as AR2 in the AR\textsubscript{p} framework}, which itself is an updated version of the MATLAB implementation of ARC in \cite{Cartis:2009fq}, using the $\sigma_{k}$ update rule in \cite{gouldUpdatingRegularizationParameter2012}; we used  default settings of this code in our experiments. We then implemented  R-ARC and R-ARC-D by modifying the ARC code as follows:

\begin{itemize}
    \item Regularize the subproblem with $\|\hat{s}\|_{2}^{3}$ rather than $\|S_{k}^{\top}\hat{s}\|_{2}^{3}$ (this empirically improves behaviour in the subproblem solver)
    \item Terminate using conditions on $\|\SK\nabla f(x_{k})\|$ rather than $\|\nabla f(x_{k})\|$ (this implies explicit access to $\nabla f(x_{k})$ is not required)
    \item When applying the adaptive sketch size update rule, set $C, D = 1$, that is $l_{k} = \hat{R}_{k} + 1$
    \item Only draw $S_{k + 1}$ randomly if the step $k$ is successful, else set $S_{k+1} = S_{k}$ (this means $S_{k+1}\nabla^{2}f(x_{k+1})S_{k+1}^{\top} = \SK \hessFK \SK^{\top}$, avoiding re-calculation and saving computational time)
    \end{itemize}

For both full-rank and low-rank problems, we compare the R-ARC and R-ARC-D algorithms for different sketching dimensions with the original ARC algorithm. We also experiment by using different choices of sketching matrices.
In all cases, we terminate when $\|\SK\nabla f(x_{k})\| < 10^{-5}$. If not otherwise specified, we use scaled Gaussian sketching matrices.

\subsection{Full-rank problems}

Here we consider the problems  in \autoref{tab:cutest_fullrank}. We conduct experiments varying the sketch size and sketching matrix distribution. For sketch size, we consider $l = 10\%, 20\%, 50\%$ of the original problem dimension (so for $d=1000$, we consider $l = 100, 200, 500$). For the sketching matrix distribution, we consider \textit{scaled Gaussian, scaled sampling} and \textit{scaled Haar} matrices.

\paragraph{R-ARC} We first test the R-ARC algorithm given by \autoref{alg:R-ARC}. We initially consider Gaussian sketching matrices only across a range of sketch dimensions. We plot two regimes: the initial behaviour for low precision (1e-2) and an extended view for a high(er) precision (1e-5). The plots are given in \autoref{fig:gaussian_rarc}.

\begin{figure}
    \centering
    \includegraphics[width=0.4\linewidth]{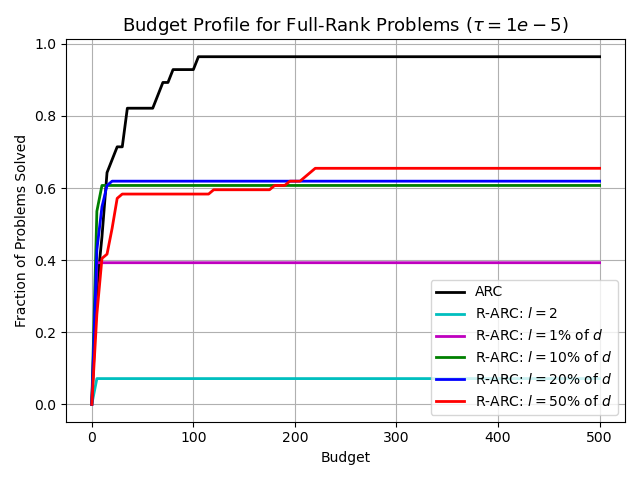}
    \includegraphics[width=0.4\linewidth]{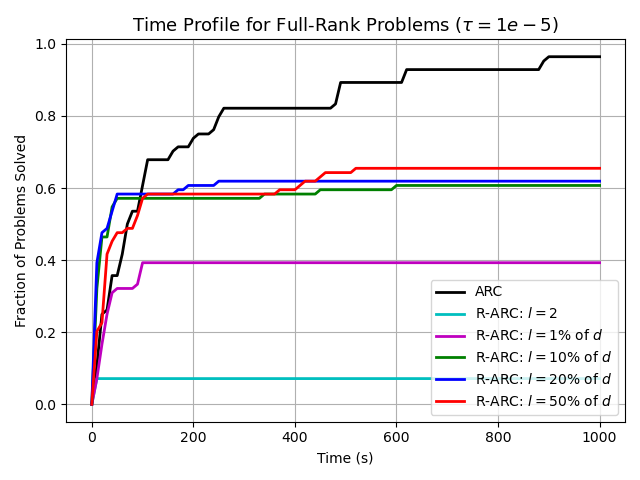}
    \includegraphics[width=0.4\linewidth]{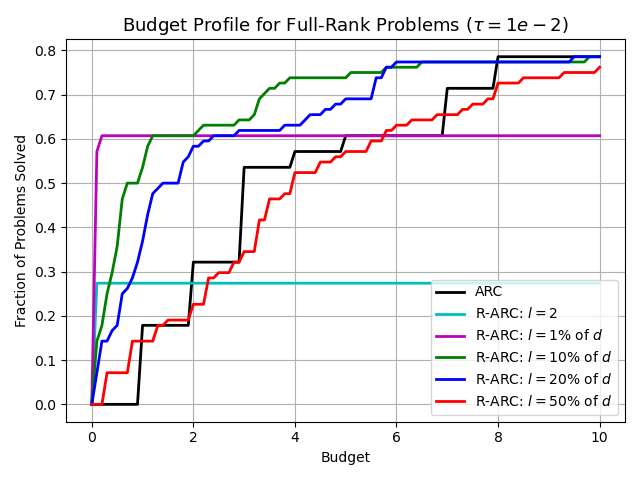}
    \includegraphics[width=0.4\linewidth]{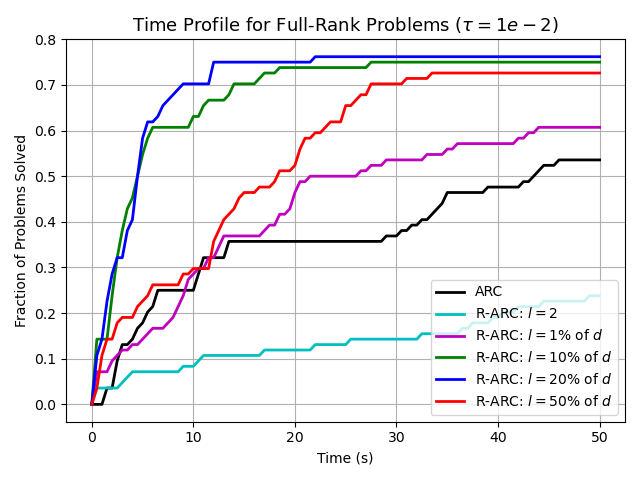}
    \caption{Varying the sketch dimension using scaled Gaussian matrices, comparing R-ARC vs ARC; plotting budget and time; full-rank problems}
    \label{fig:gaussian_rarc}
\end{figure}

We see that for the low-precision solution, initially, small sketching matrices  ($l = 1, 10\%$) perform well when considering budget, although do not outperform when considering runtime. For the higher precision plot, sketching at $10\%$ or more of the problem dimension yields similar results. Sketching to $50\%$ does lead to a slightly greater fraction of problems being solved, but this is still lower than the original ARC algorithm.

\paragraph{R-ARC-D}

Here we experiment using the R-ARC-D algorithm, varying the sketch dimension. As the maximum number of iterations is 2000 and the function dimensions are typically $d \approx 1000$, the sketch dimension $l_k$ usually increases until it reaches the full dimension or the algorithm converges. The plots can be found in \autoref{fig:gaussian_rarcd}.

\begin{figure}
    \centering
    \includegraphics[width=0.4\linewidth]{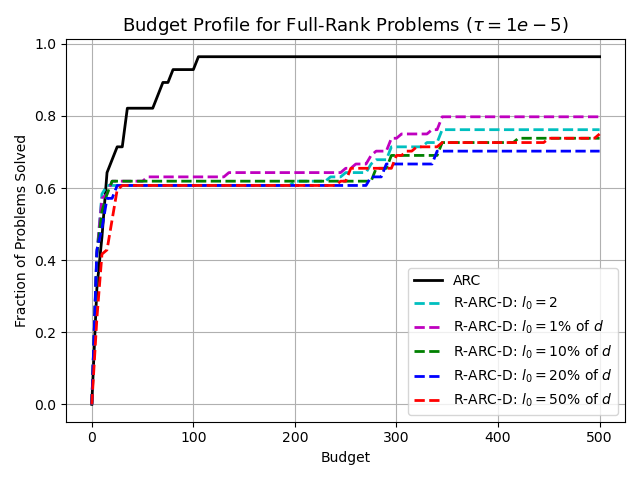}
    \includegraphics[width=0.4\linewidth]{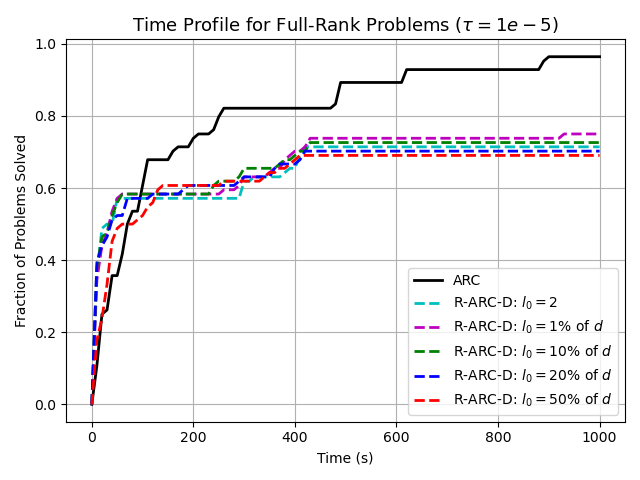 }
    \includegraphics[width=0.4\linewidth]{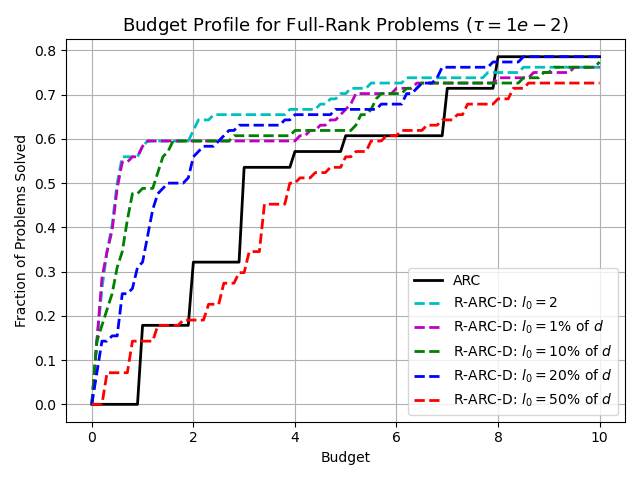}
    \includegraphics[width=0.4\linewidth]{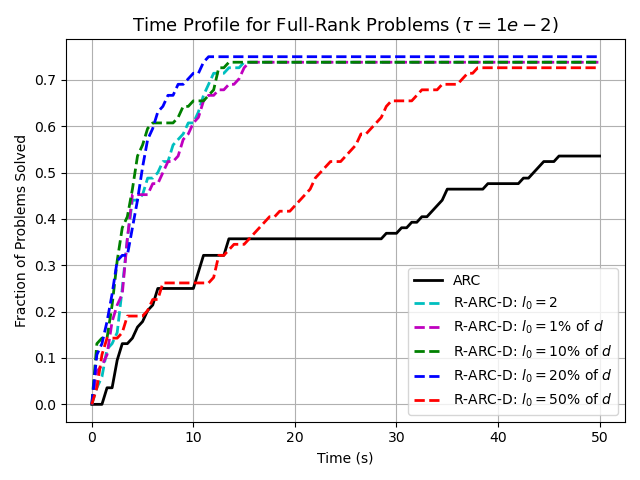}
    \caption{Varying the sketch dimension using scaled Gaussian matrices, comparing R-ARC-D vs ARC; plotting budget and time; full-rank problems}
    \label{fig:gaussian_rarcd}
\end{figure}

When considering budget, we see that in the full plot, small starting sketch dimensions ($l_0 = 2$, $l_0 = 1\%$) typically work well; these small starting sketches also work well when considering the initial behaviour for low-precision solutions. The gap between profiles is smaller when considering time, with smaller initial sketch dimension still appearing to perform better. 

\paragraph{R-ARC-D vs R-ARC}

In these plots, we compare the performance of R-ARC-D with R-ARC (and ARC) for selected initial sketch dimensions. We plot R-ARC with sketch dimension of $l = 20, 50\%$ as when considering runtime, these dimension perform initially and asymptotically. For R-ARC-D, we plot initial sketch dimensions of $l_0 = 1, 20\%$ as $l_0 = 1\%$ performs the best for the high-precision plots, whilst $l_0 = 20\%$ provides a point of comparison to R-ARC with $l = 20$. The data profiles are plotted in \autoref{fig:gaussian_rarc_vs_rarcd}. Additional problem-by-problem plots for both R-ARC and R-ARC-D in \autoref{fig:full_rank_individual_problems}.

\begin{figure}
    \centering
    \includegraphics[width=0.4\linewidth]{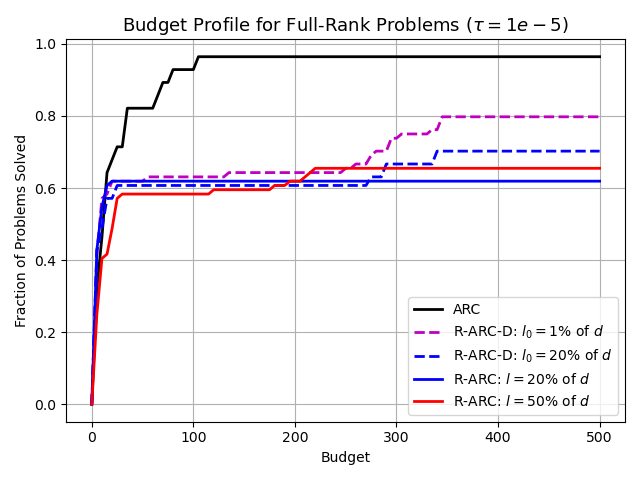}
    \includegraphics[width=0.4\linewidth]{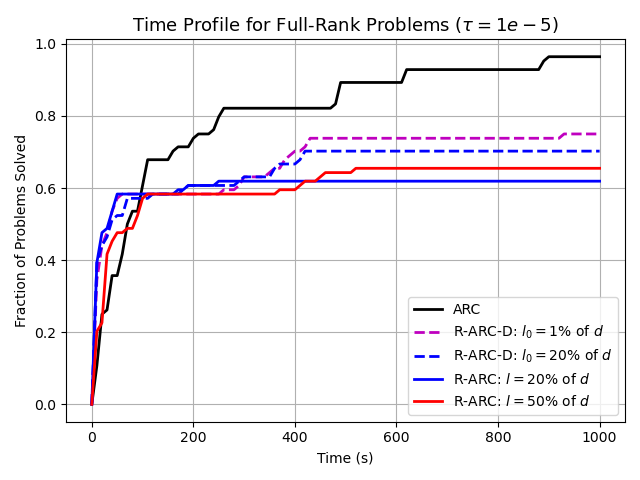 }
    \includegraphics[width=0.4\linewidth]{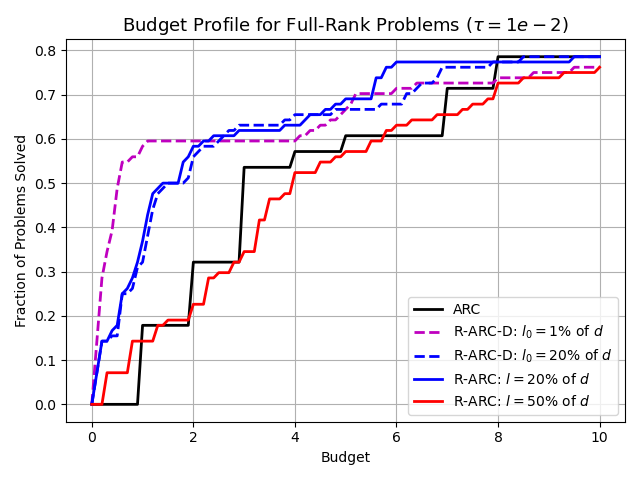}
    \includegraphics[width=0.4\linewidth]{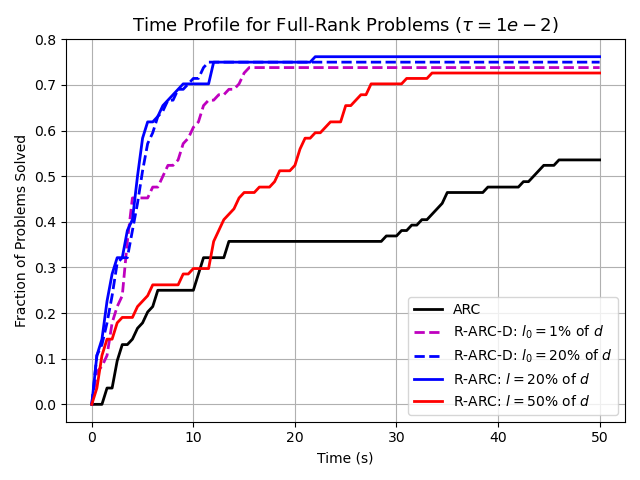}
    \caption{Varying the sketch dimension using scaled Gaussian matrices, comparing R-ARC-D vs R-ARC vs ARC; plotting budget and time; full-rank problems}
    \label{fig:gaussian_rarc_vs_rarcd}
\end{figure}

\paragraph{Other sketching matrices} We have so far considered only Gaussian matrices. We include comparable plots for Haar matrices in Figures \ref{fig:haar_rarc}, \ref{fig:haar_rarcd} and \ref{fig:haar_rarc_vs_rarcd}; plots for sampling matrices can be found in Figures \ref{fig:sampling_rarc}, \ref{fig:sampling_rarcd} and \ref{fig:sampling_rarc_vs_rarcd}. We now include a subset of plots, showing the best performance achieved by each of the sketching matrices; in each case, this was achieved by R-ARC-D. We plot the results in \autoref{fig:comparing_matrices}.

\begin{figure}
    \centering
    \includegraphics[width=0.4\linewidth]{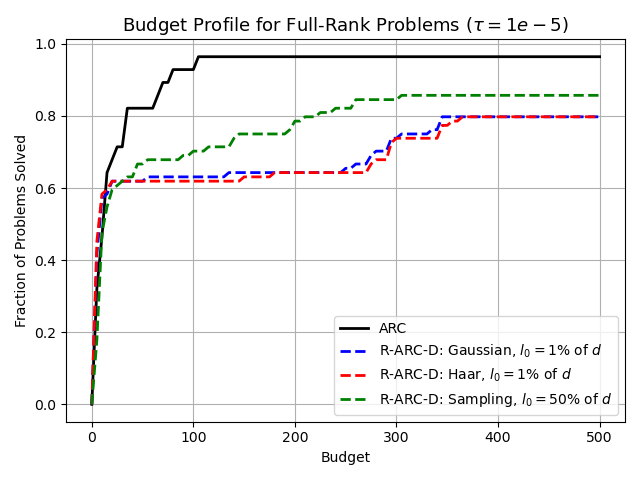}
    \includegraphics[width=0.4\linewidth]{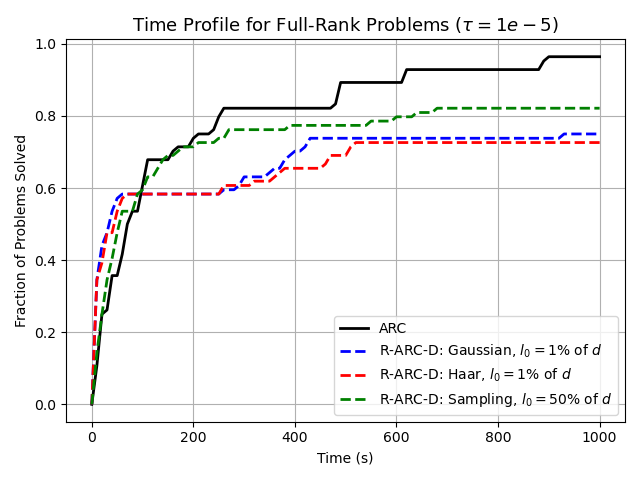 }
    \includegraphics[width=0.4\linewidth]{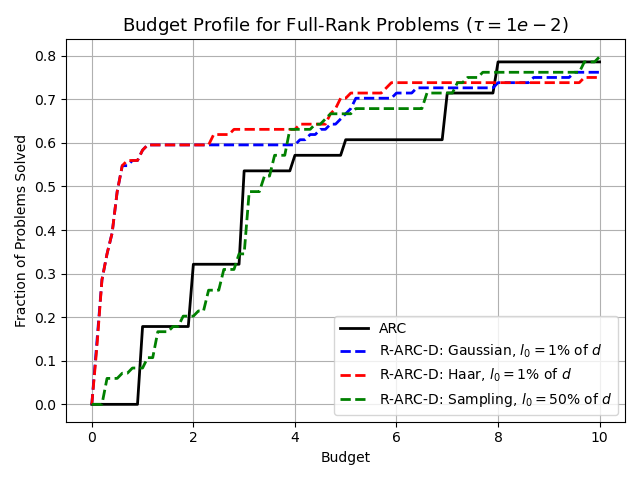}
    \includegraphics[width=0.4\linewidth]{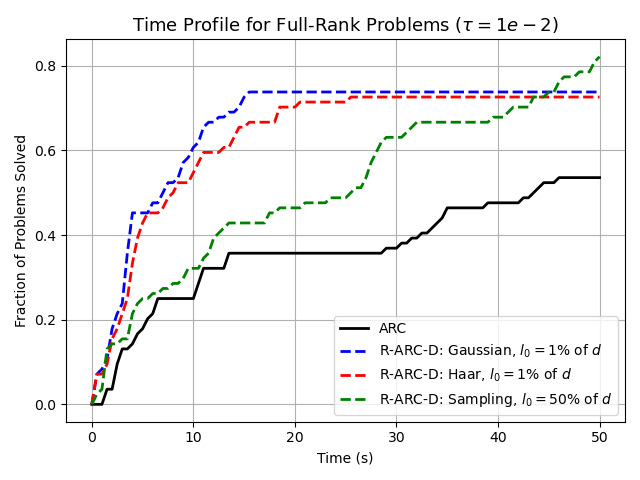}
    \caption{Comparing Gaussian matrices with scaled Haar and scaled sampling matrices in R-ARC-D; plotting budget and time for the best $l_0$ for each matrix type; full-rank problems}
    \label{fig:comparing_matrices}
\end{figure}

We see that Gaussian and Haar matrices perform similarly. This could be expected, as a Haar matrices can be sampled by taking a QR factorization of a Gaussian matrix, which retains the image. Sampling matrices perform better than the other two matrices. This suggests that the coordinate directions provide useful information in CUTEst problems. 

\subsection{Low-rank problems}

\paragraph{R-ARC}

We plot the performance of the R-ARC algorithm on the low-rank problems, comparing the performance to that of ARC. We plot sketch dimensions $l = 2$ and $l=1, 5, 7.5\%$ of the problem dimension $d$. The results are plotted in \autoref{fig:lr_gaussian_rarc}.

\begin{figure}
    \centering
    \includegraphics[width=0.4\linewidth]{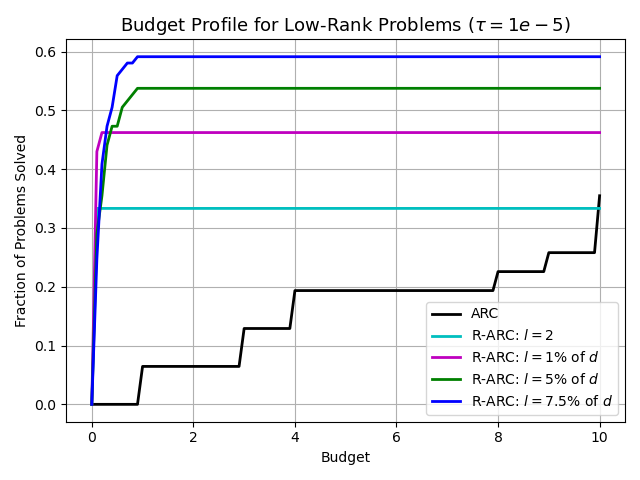}
    \includegraphics[width=0.4\linewidth]{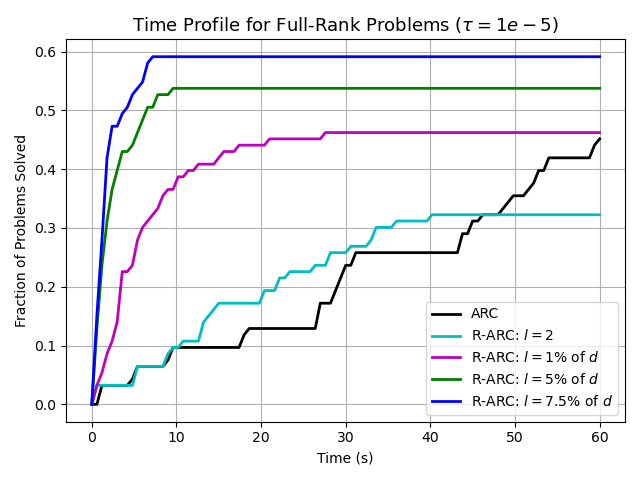}
    \includegraphics[width=0.4\linewidth]{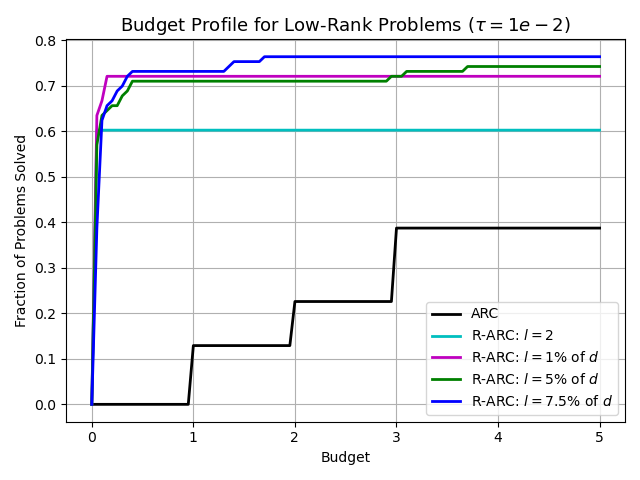}
    \includegraphics[width=0.4\linewidth]{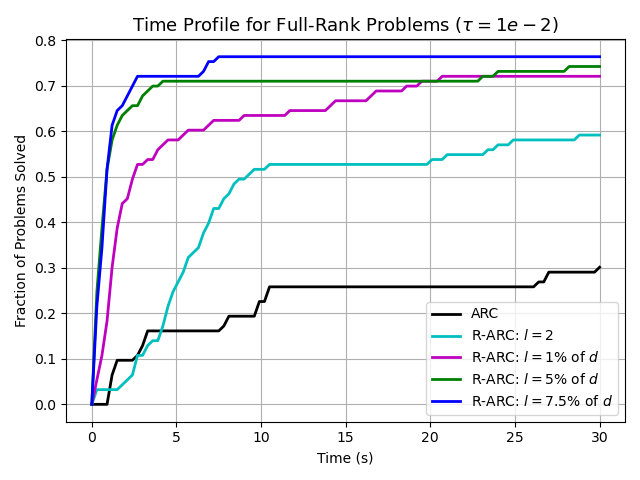}
    \caption{Varying the sketch dimension using scaled Gaussian matrices in R-ARC; plotting budget and time; low-rank problems}
    \label{fig:lr_gaussian_rarc}
\end{figure}

We see that when considering the runtime, R-ARC performs better as the sketch dimension $l$ increases. This is similarly observed when considering the budget profile.

\paragraph{R-ARC-D}

We now plot the performance of R-ARC-D against ARC on low-rank problems. The results are plotted in \autoref{fig:lr_gaussian_rarcd}.

\begin{figure}
    \centering
    \includegraphics[width=0.4\linewidth]{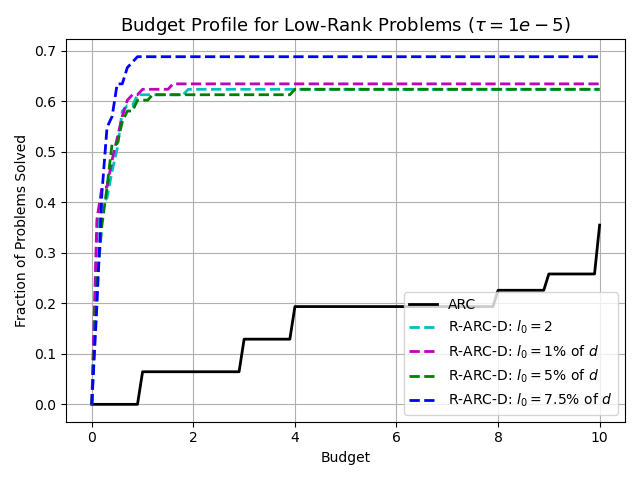}
    \includegraphics[width=0.4\linewidth]{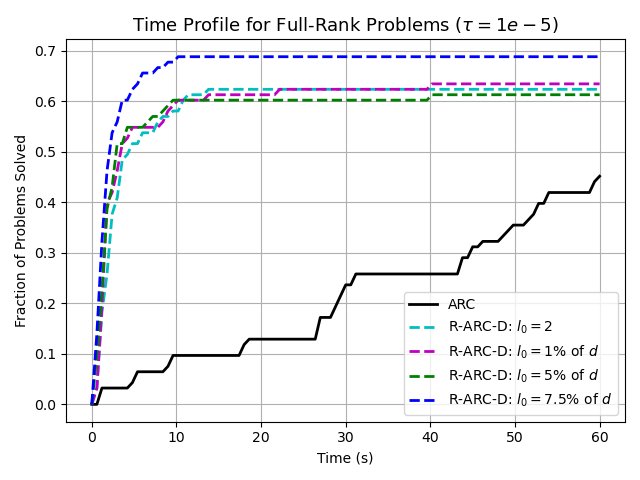 }
    \includegraphics[width=0.4\linewidth]{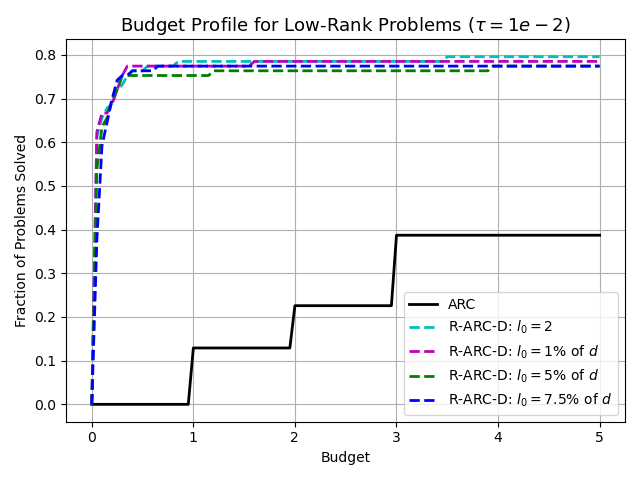}
    \includegraphics[width=0.4\linewidth]{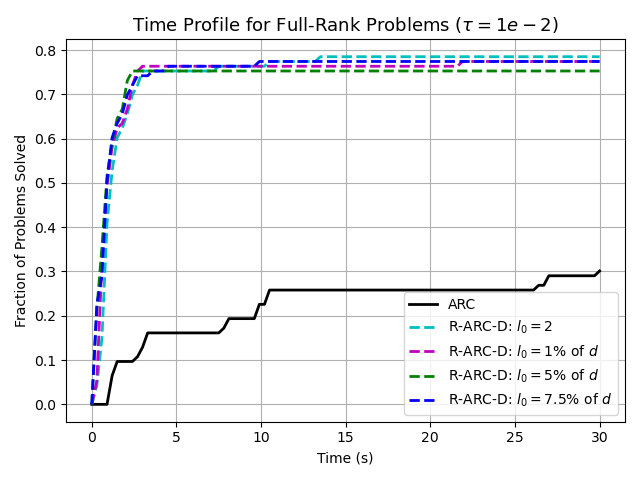}
    \caption{Varying the sketch dimension using scaled Gaussian matrices in R-ARC-D; plotting budget and time; low-rank problems}
    \label{fig:lr_gaussian_rarcd}
\end{figure}

Similarly to the R-ARC plots, we see that sketching with $l_0 = 7.5\%$ of $d$ performs best. The remaining starting sketch dimensions all perform similarly.

\paragraph{R-ARC-D vs R-ARC}

Here we compare the performance of R-ARC-D against R-ARC (and ARC). We plot R-ARC with $l=7.5\%$ of $d$ and R-ARC-D with $l_0 = 2, l_0 = 7.5\%$ of $d$. The results are shown in \autoref{fig:lr_gaussian_rarc_vs_rarcd}. Additional problem-by-problem plots for both R-ARC and R-ARC-D in Figure \ref{fig:low_rank_individual_problems}.

\begin{figure}
    \centering
    \includegraphics[width=0.4\linewidth]{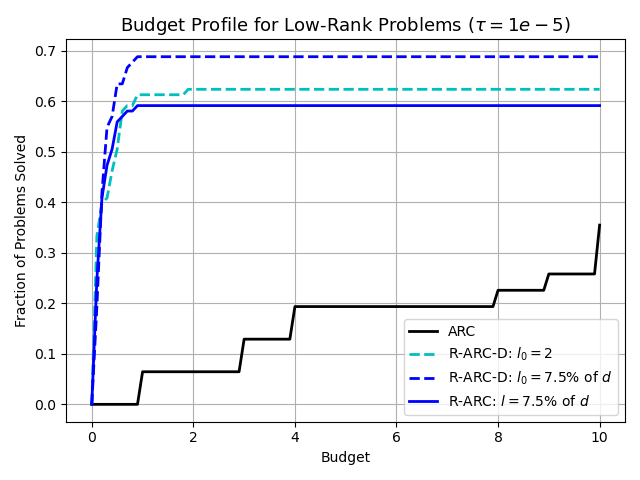}
    \includegraphics[width=0.4\linewidth]{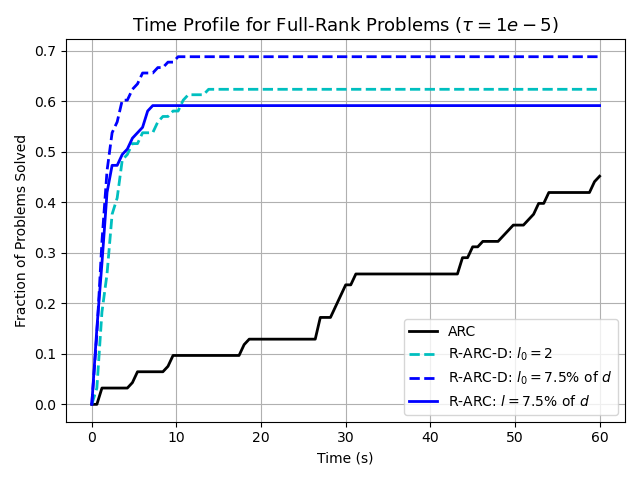 }
    \includegraphics[width=0.4\linewidth]{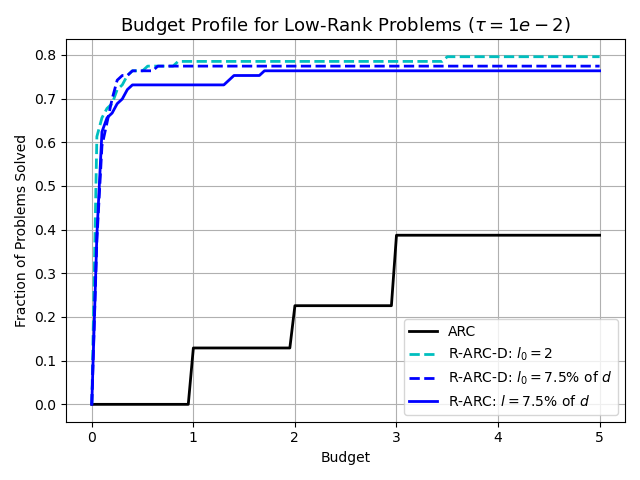}
    \includegraphics[width=0.4\linewidth]{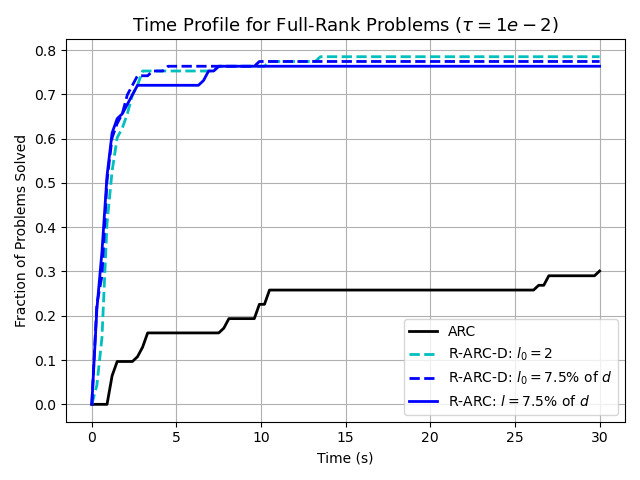}
    \caption{Varying the sketch dimension using scaled Gaussian matrices in R-ARC/R-ARC-D; plotting budget and time; low-rank problems}
    \label{fig:lr_gaussian_rarc_vs_rarcd}
\end{figure}

We see that even starting with $l_0 = 2$ leads to better performance than that of R-ARC with $l=7.5\%$ of $d$, demonstrating the benefits of the adaptive sketch dimension scheme.

\paragraph{Other sketching matrices} We now consider the performance of Haar and sampling matrices. We include comparable plots for sampling matrices can be found in Figures \ref{fig:lr_sampling_rarc}, \ref{fig:lr_sampling_rarcd} and \ref{fig:lr_sampling_rarc_vs_rarcd}; plots for Haar matrices in Figures \ref{fig:lr_haar_rarc}, \ref{fig:lr_haar_rarcd} and \ref{fig:lr_haar_rarc_vs_rarcd}. We now include a subset of plots, showing the best performance achieved by each of the sketching matrices; in each case, this was achieved by R-ARC-D. We plot the results in \autoref{fig:lr_comparing_matrices}.

\begin{figure}
    \centering
    \includegraphics[width=0.4\linewidth]{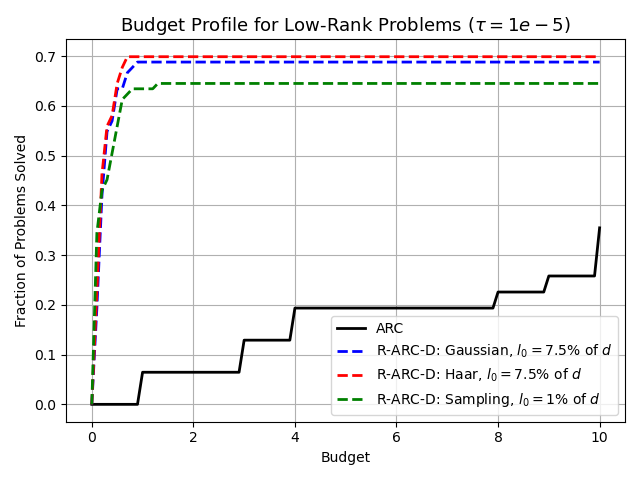}
    \includegraphics[width=0.4\linewidth]{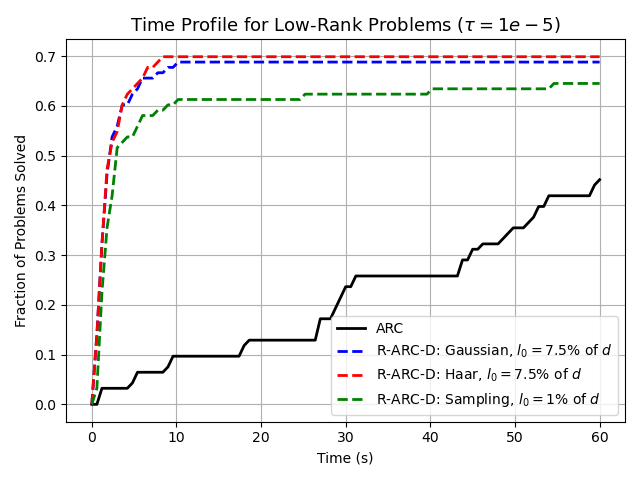 }
    \includegraphics[width=0.4\linewidth]{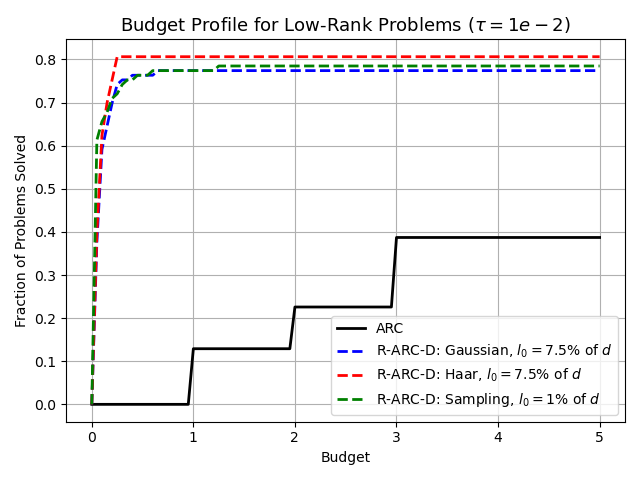}
    \includegraphics[width=0.4\linewidth]{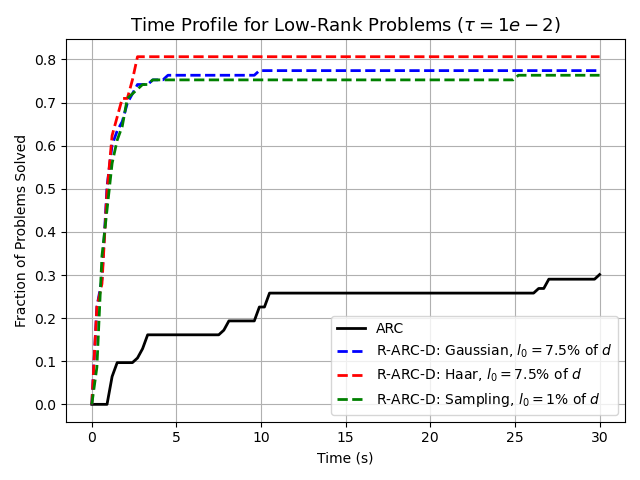}
    \caption{Comparing Gaussian matrices with Haar and sampling matrices in R-ARC-D; plotting budget and time; low-rank problems}
    \label{fig:lr_comparing_matrices}
\end{figure}

Similarly to for the full-rank problems, we see similar performance for the Gaussian and Haar matrices. However, now we see that sampling matrices perform worse than the other matrices. This further suggests that for the full-rank problems, the good performance of sampling was caused by axis alignment of CUTEst problems.

\subsection{Conclusion}
Our numerical experiments show that there are some problems for which R-ARC significantly outperforms ARC when accuracy is measured against either relative Hessians or time. In these problems, the gains in computational efficiency on the subproblem outweigh the fact that a less optimal step can be selected due to the restriction to a subspace. However, there are some problems for which R-ARC stagnates. In these problems, it appears that more problem information is required on each iteration for a step to make progress. Further algorithm development is needed to improve this aspect by for example, adding deterministic past directions and past sketched gradients.

{\small{\bibliography{combined}}}
\bibliographystyle{abbrv}

\newpage
\appendix

\section{Alternative choices of sketching matrices}
\label{sec:sketching_matrices}
Embedding properties hold for other families of matrices other that Gaussian matrices, such as for $s$-hashing matrices defined below, although with a different dependency on $l$; such ensembles are defined below as well as other that we use in this paper and a summary of their embedding properties can be found in \cite{Zhen-PhD, 2021arXiv210511815C}.

Whilst we primarily consider Gaussian matrices in the theoretical sections of this paper, we also give definitions for several other choices of sketching matrices that we refer to or use in the numerical results. We first provide a definition for \textit{$s$-hashing matrices}.

\begin{definition} \cite{10.1561/0400000060}  \label{def:s-hashing}
We define $S \in \R^{l \times d}$  to be an $s$-hashing matrix if, 
independently for each $j \in [d]$, we sample without replacement 
$i_1, i_2, \dots, i_s \in [l]$ uniformly at random and 
let $S_{i_k j} = \pm 1/\sqrt{s}$, $k = 1, 2, \dots, s$.
\end{definition}

Comparing to Gaussian matrices, $s$-hashing matrices, including in the case when $s=1$,
are sparse, having $s$ nonzero entries per column, and  they preserve the sparsity (if any) of the vector/matrix they act on; 
and the corresponding linear algebra is computationally faster. 

Another family of sparse matrices are the well known (Scaled) sampling matrices $S\in\R^{l\times d}$, which randomly select entries/rows of the vector/matrix it acts on (and scale it). 
\begin{definition}\label{def:sampling}
We define $S=(S_{ij}) \in \R^{l \times d}$ to be a scaled sampling matrix if, independently for each $i \in [l]$, we sample $j \in [d]$ uniformly at random and let $S_{ij}=\sqrt{\frac{d}{l}}$. 
\end{definition}

\begin{definition}\label{def:SRHT}
    A Subsampled-Randomized-Hadamard-Transform (SRHT) \cite{10.1145/1132516.1132597, Tropp:wr} is an $l \times d$ matrix of the form $S = S_{s}HD$ with $l \leq d$, where
    \begin{itemize}
        \item $D$ is a random $d \times d$ diagonal matrix with $\pm 1$ independent entries.
        \item $H$ is an $d \times d$ Walsh-Hadamard matrix defined by
        \begin{equation}
            H_{ij} = d^{-1/2}(-1)^{\langle (i-1)_{2}, (j-1)_{2}\rangle},
        \end{equation}
        where $(i-1)_{2}, (j-1)_{2}$ are binary representation vectors of the numbers $(i-1), (j-1)$ respectively.
        \item $S_{h}$ is a random $l \times d$ scaled sampling matrix, independent of $D$.
   \end{itemize}
\end{definition}

\begin{definition}\label{def:HRHT}
    A Hashed-Randomized-Hadamard-Transform (HRHT)  \cite{Zhen-PhD} is an $l \times d$ matrix of the form $S = S_{h}HD$ with $m \leq n$, where
    \begin{itemize}
        \item $D$ is a random $d \times d$ diagonal matrix with $\pm 1$ independent entries.
        \item $H$ is an $d \times d$ Walsh-Hadamard matrix defined by
        \begin{equation}
            H_{ij} = d^{-1/2}(-1)^{\langle (i-1)_{2}, (j-1)_{2}\rangle},
        \end{equation}
        where $(i-1)_{2}, (j-1)_{2}$ are binary representation vectors of the numbers $(i-1), (j-1)$ respectively.
        \item $S_{h}$ is a random $l \times d$ $s$-hashing or $s$-hashing variant matrix, independent of $D$.
    \end{itemize}
\end{definition}

We finally give the definition of a Haar matrices; these can be thought of as random orthogonal matrices in that $SS^{\top} = I$. 

\begin{definition}[\cite{meckesRandomMatrixTheory2019}]\label{def:haar}
    Letting $\mathbb{O}(d)$ denote the set of $d \times d$ orthogonal matrices. The Haar measure $\mu$ is the unique translation invariant probability measure on $\mathbb{O}(d)$. That is, for any fixed $O \in \mathbb{O}(d)$
    \begin{equation}
        UO \stackrel{d}{\sim} OU \stackrel{d}{\sim} O
    \end{equation}
    where $U$ is distributed according to $\mu$. A Haar-distributed random matrix is a random matrix distributed according to the Haar measure. 
\end{definition}

We now give a definition of scaled Haar matrices, which reflects the use of Haar matrices in \cite{dzahiniDirectSearchStochastic2024}.

\begin{definition}
    A scaled Haar-distributed random matrix $S$ is an $l \times d$ matrix of the form $S = \sqrt{\frac{d}{l}}\tilde{S}$ such that $[\tilde{S} \tilde{S}^{\perp}]^{\top}$ is a Haar-distributed random matrix.
\end{definition}
We note that under our definition, scaled Haar-distributed random matrices can be rectangular and will be for our purpose of random subspace methods.

Other crucial random ensembles with JL properties are the so-called
 Subsampled-Randomized-Hadamard-Transform \cite{10.1145/1132516.1132597, Tropp:wr} and Hashed-Randomized-Hadamard-Transform 
 \cite{cartis_hashing, Zhen-PhD} matrices.

\section{Fast convergence rate assuming sparsity of the Hessian matrix}

This section is mostly conceptual and is an attempt to show the fast convergence rate of \autoref{alg:R-ARC} can be achieved without assuming subspace embedding of the Hessian matrix. 
Here, we maintain $\nEps$ as $\min \{k: \normTwo{\grad f(x_{k+1})} \leq \epsilon \} $, similarly to the last section. 
However, in the definition of true iterations, we replace the condition \eqref{tmp:CBGN:1} on subspace embedding of the Hessian with the condition that the sketched Hessian $S_k \hessFK$ \reply{has a small norm}. This may be achieved when the Hessian matrix has sparse rows and we choose $S_k$ to be a scaled sampling matrix.
We show that this new definition of true iterations still allows the same $\mathO{\epsilon^{-3/2}}$ iteration complexity to drive the norm of the objective's gradient norm below $\epsilon$.
Specifically, true iterations are defined as follows.

\begin{definition} \label{def:true:CBGN:SparseHess}
Let $\epS \in (0,1)$, $\sMax >0$. Iteration $k$ is ($\epS, \sMax)$-true if 
\begin{align}
    &\normTwo{S_k \hessFK} \leq c_k \epsHalf, 
    \label{smallSketchedHessian} \\
    &\normTwo{S_k \gradFK}^2 \geq \bracket{1-\epS} \epsilon^2, 
    \label{skGradLowerBound_sparse_Hess} \\
    &\normTwo{S_k} \leq \sMax \label{S_k_norm_bound_Sparse_hess},
\end{align}
where $c_k = \sqrt{\frac{4\bracket{1-\epS}^{1/2} \sMax}{3\alphaMax}}$ and $\alphaMax$ is a user-chosen constant in \autoref{alg:R-ARC}. 
\end{definition}

Note that the desired accuracy $\epsilon$ appears in this particular definition of true iterations. 

Consequently, the requirements on the objective and the sketching dimension $l$ may be stronger for smaller $\epsilon$. For simplicity, we assume $\kappa_T = 0$ (where $\kappaT$ is a user chosen parameter in \eqref{tmp:CBGN:5} in \autoref{alg:R-ARC}) in this section, namely $\grad \mKHat{\sKHat} = 0$, and it follows from \eqref{tmp:CBGN:5} that
\begin{equation}
    S_k \gradFK = \frac{1}{\alphaK} S_k S_k^T 
    \sKHat \normTwo{S_k^T \sKHat} - S_k\hessFK S_k^T \sKHat.
    \label{exactModelGradZero}
\end{equation}

The proofs that \autoref{AA3} and \autoref{AA5} are satisfied are identical to the previous section, while the following technical lemma helps us to 
satisfy \autoref{AA4}. 

\begin{lemma}\label{spHessLem}
Let $\epsilon>0$.
Let $\epS \in (0,1)$, $\kappaT=0$. 
Suppose we have \eqref{smallSketchedHessian}, \eqref{skGradLowerBound_sparse_Hess} and
\eqref{S_k_norm_bound_Sparse_hess}.
Then 
\begin{equation}
    \normTwo{S_k^T \sKHat} \geq \alphaK
    \sqrtFrac{\bracket{1-\epS}^{1/2}\epsilon}{3\sMax\alphaMax}. \notag
\end{equation}
\end{lemma}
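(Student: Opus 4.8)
The plan is to exploit the exact first-order stationarity of the reduced model. Since $\kappaT = 0$, the termination condition \eqref{tmp:CBGN:5} forces $\normTwo{\grad \mKHat{\sKHat}} = 0$, so the model gradient vanishes identically; this is exactly the content of \eqref{exactModelGradZero}. First I would take norms on both sides of \eqref{exactModelGradZero}, apply the triangle inequality together with submultiplicativity of the operator norm, and write $t := \normTwo{S_k^T \sKHat}$ for brevity.

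The two resulting terms are then controlled by the three true-iteration hypotheses. For the cubic-regularization term, $\normTwo{S_k S_k^T \sKHat} \leq \normTwo{S_k}\, t \leq \sMax t$ by \eqref{S_k_norm_bound_Sparse_hess}, giving a contribution of at most $\frac{\sMax}{\alphaK} t^2$. For the Hessian term, \eqref{smallSketchedHessian} gives $\normTwo{S_k \hessFK S_k^T \sKHat} \leq \normTwo{S_k \hessFK}\, t \leq c_k \epsHalf t$. Combining these with the gradient lower bound \eqref{skGradLowerBound_sparse_Hess}, namely $\normTwo{S_k \gradFK} \geq (1-\epS)^{1/2}\epsilon$, yields the scalar inequality
\begin{equation}
    (1-\epS)^{1/2}\epsilon \leq \frac{\sMax}{\alphaK} t^2 + c_k \epsHalf t. \notag
\end{equation}

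It remains to convert this quadratic inequality into the claimed lower bound on $t$. Rather than solving the quadratic explicitly, I would argue by contradiction: suppose $t < \alphaK \sqrtFrac{(1-\epS)^{1/2}\epsilon}{3\sMax\alphaMax}$. Substituting this upper bound into the two right-hand terms and using $\alphaK \leq \alphaMax$ (which holds throughout \autoref{alg:R-ARC}, since $\alpha_0 \leq \alphaMax$ and the update rule never exceeds $\alphaMax$) bounds the first term by $\frac{(1-\epS)^{1/2}\epsilon}{3}$, while the particular value $c_k = \sqrt{\frac{4(1-\epS)^{1/2}\sMax}{3\alphaMax}}$ is tailored so that the second term is bounded by $\frac{2(1-\epS)^{1/2}\epsilon}{3}$. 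Their sum is then strictly less than $(1-\epS)^{1/2}\epsilon$, contradicting the displayed inequality.

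The only delicate point — and the reason the constant $c_k$ has its peculiar square-root form — is this final balancing: at the threshold value of $t$ the regularization term must contribute exactly one third and the sketched-Hessian term exactly two thirds of $(1-\epS)^{1/2}\epsilon$ when $\alphaK = \alphaMax$, so that the slack in $\alphaK \leq \alphaMax$ closes the argument. Verifying these two algebraic bounds is the main (though entirely routine) computation; everything else is triangle-inequality bookkeeping.
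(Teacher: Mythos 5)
Your proposal is correct and follows essentially the same route as the paper: both derive the scalar inequality $(1-\epS)^{1/2}\epsilon \leq \frac{\sMax}{\alphaK}t^2 + c_k\epsilon^{1/2}t$ from \eqref{exactModelGradZero} via the triangle inequality and the three true-iteration conditions, and both invoke $\alphaK\leq\alphaMax$ through the definition of $c_k$. The only difference is cosmetic: the paper resolves the quadratic by completing the square and a monotonicity argument, whereas you substitute the threshold value directly and check the one-third/two-thirds split, which is an equally valid (and arguably tidier) way to finish.
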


\begin{proof}
Let $b = \normTwo{S_k \hessFK}, x = \normTwo{S_k^T \sKHat}$,
then taking 2-norm of \eqref{exactModelGradZero} with
\eqref{skGradLowerBound_sparse_Hess}, $\normTwo{S_k}\leq \sMax$ and the triangle inequality
gives 
\begin{align}
    & \oneMinusEpSHalf \epsilon \leq \normTwo{S_k \gradFK}
    \leq \frac{\sMax}{\alphaK}x^2 + bx \notag\\
    \implies 
    &\frac{\sMax}{\alphaK} x^2 + bx - \oneMinusEpSHalf\epsilon \geq 0
    \notag \\
    \implies 
    & x^2 + \frac{\alphaK b}{\sMax}x - \frac{\oneMinusEpSHalf\epsilon
    \alphaK}{\sMax} \geq 0 \notag \\
    \implies
    & \bracket{x + \frac{\alphaK b}{2\sMax}}^2 
    \geq \frac{\oneMinusEpSHalf\epsilon\alphaK}{\sMax}
    + \frac{\alphaK^2 b^2}{4\sMax^2} \notag \\
    \impliesSince{x,b\geq 0} 
    & x \geq \sqrt{
    \frac{\oneMinusEpSHalf\epsilon\alphaK}{\sMax}
    + \frac{\alphaK^2 b^2}{4\sMax^2} } - \frac{\alphaK b}{2\sMax}.
    \notag
\end{align}

Introduce $a = \frac{\oneMinusEpSHalf\epsilon\alphaK}{\sMax}$ and the function $
y(b) = \frac{\alphaK b}{2\sMax}$, then the above gives
\begin{equation}
    x \geq \sqrt{a + y(b)^2} - y(b). \label{tmp-2022-1-14-9}
\end{equation}

We note, by taking derivative, that given $y(b) \geq 0$, the RHS of \eqref{tmp-2022-1-14-9} is monotonically decreasing
with $y(b)$. Therefore given $b\leq c_k 
\epsilon^{\frac{1}{2}}$ and thus $y(b) \leq y(c_k \epsilon^{\frac{1}{2}})$, we have
\begin{equation}
    x \geq \sqrt{a + y(c_k \pow{\epsilon}{\frac{1}{2}})^2}
    - y(c_k \pow{\epsilon}{\frac{1}{2}}).
    \notag
\end{equation}

The choice of $c_k = \sqrt{\frac{4\bracket{1-\epS}^{1/2} \sMax}{3\alphaMax}} \leq \sqrt{\frac{4\bracket{1-\epS}^{1/2} \sMax}{3\alpha_k}}$ gives 
$a \geq 3 y\bracket{c_k \epsilon^{\frac{1}{2}}}^2$.
And therefore we have $x \geq \fun{y}{c_k \pow{\epsilon}{\frac{1}{2}}}$. Noting that $x = \normTwo{S_k^T \sKHat}$ and substituting the expression for $y$ and $c_k$ gives the desired result. 
    
\end{proof}

\begin{lemma}
Following the framework of \autoref{alg:R-ARC} 
with $\kappaT = 0$, let $\epsilon > 0.$ Define
true iterations as iterations
that satisfy \eqref{smallSketchedHessian}, \eqref{skGradLowerBound_sparse_Hess},
and 
\eqref{S_k_norm_bound_Sparse_hess}
. Then
\autoref{AA4} is satisfied with
\begin{equation}
    h(\epsilon, \alphaK) = 
    \frac{\theta \alphaK^2 \epsilon^{3/2}}{3}\squareBracket{
        \frac{\bracket{1-\epS}^{1/2}}{3\sMax\alphaMax}
    }^{3/2}. \label{tmp-2022-1-14-10}
\end{equation}

\end{lemma}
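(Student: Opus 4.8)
The plan is to establish \autoref{AA4} by directly chaining the two results that immediately precede this lemma. Since \autoref{AA4} only constrains iterations that are simultaneously true and successful with $k < \nEps$, being \emph{successful} lets me quote the sufficient-decrease estimate \autoref{succStepDecrease}, while being \emph{true} supplies exactly the three conditions \eqref{smallSketchedHessian}, \eqref{skGradLowerBound_sparse_Hess} and \eqref{S_k_norm_bound_Sparse_hess} under which the step-size lower bound \autoref{spHessLem} was proved (note that the gradient lower bound needed for $k<\nEps$ is already encoded in \eqref{skGradLowerBound_sparse_Hess}). Thus no new analysis is required: the whole lemma is an algebraic assembly of two estimates already in hand.

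First I would invoke \autoref{succStepDecrease}: because the iteration is successful,
\begin{equation}
    \fK - \fKPlusOne \geq \frac{\theta}{3\alphaK} \normTwo{\SK^T \sKHat}^3. \notag
\end{equation}
Next, because the iteration is true and $\kappaT = 0$, \autoref{spHessLem} yields
\begin{equation}
    \normTwo{\SK^T \sKHat} \geq \alphaK \sqrtFrac{\oneMinusEpSHalf \epsilon}{3\sMax\alphaMax}. \notag
\end{equation}
Cubing this lower bound and substituting it into the decrease estimate, the factor $\alphaK^3$ absorbs the $\alphaK^{-1}$ and leaves $\alphaK^2$; extracting the $\epsilon^{3/2}$ from the bracket then gives
\begin{equation}
    \fK - \fKPlusOne \geq \frac{\theta \alphaK^2 \epsilon^{3/2}}{3}\squareBracket{\frac{\oneMinusEpSHalf}{3\sMax\alphaMax}}^{3/2}, \notag
\end{equation}
which is precisely the claimed form \eqref{tmp-2022-1-14-10} of $h(\epsilon, \alphaK)$.

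Finally I would verify the structural requirements that \autoref{AA4} imposes on $h$: non-negativity, monotonicity in each argument, and strict positivity when both arguments are positive. These are immediate from the closed form, since $h$ is a positive constant multiple (positive because $\theta>0$, $\epS\in(0,1)$, and $\sMax,\alphaMax>0$) of $\alphaK^2 \epsilon^{3/2}$, which is increasing in both $\alphaK>0$ and $\epsilon>0$. I do not anticipate any genuine obstacle here: all of the real work — solving the quadratic inequality arising from \eqref{exactModelGradZero} and exploiting the monotone dependence of the resulting root on $b = \normTwo{S_k\hessFK}$ — has already been discharged in \autoref{spHessLem}, so the present step is purely a substitution followed by a routine simplification.
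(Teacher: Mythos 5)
Your proposal is correct and follows essentially the same route as the paper's proof: invoke \autoref{succStepDecrease} on a successful iteration, substitute the step-size lower bound from \autoref{spHessLem} (available since the iteration is true and $\kappaT=0$), and simplify. The added verification of the structural properties of $h$ is a harmless extra that the paper leaves implicit.
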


\begin{proof}
A true and successful iteration $k$ gives
\begin{equation}
    \fK - f(x_k +s_k) \geq \frac{\theta}{3\alphaK}\normTwo{S_k^T\sKHat}^3
    \notag
\end{equation}

by \autoref{succStepDecrease} and combining with
the conclusion of \autoref{spHessLem} gives the result. 
\end{proof}

With \autoref{AA3}, \autoref{AA4} and \autoref{AA5} satisfied, applying \autoref{thm2} gives the following result for \autoref{alg:R-ARC}.

\begin{theorem}
    Let $f$ be bounded below by $f^*$ and twice continuously differentiable with $\LH$-Lipschitz continuous Hessian. Run \autoref{alg:R-ARC} for $N$ iterations. Suppose \autoref{AA2} hold with $\deltaS\in (0,1)$ and true iterations defined in \autoref{def:true:CBGN:SparseHess}. Suppose $\deltaS < \frac{c}{(c+1)^2}$.
    
    Then for any $\deltaOne \in (0,1)$ such that $g(\deltaS, \deltaOne) >0$ where $g(\deltaS, \deltaOne)$ is defined in \eqref{eqn:gDeltaSDeltaOneDef}. If $N$ satisfies
    \begin{equation}
         N \geq \gDeltaSDeltaOne \squareBracket{
         \fZeroMinusfStarOverH
         + \frac{\newL}{1+c}},
    \end{equation}
    where $h$ is given in \eqref{tmp-2022-1-14-10}, $\alphaLow$ is given in \eqref{eq:alphaLow:CBGN} and $\alphaMin, \newL$ are given in \autoref{lem::alphaMin}; then we have
    \begin{equation}
        \probability{\min_{k\leq N} \{\normTwo{\grad f(x_{k+1})}\} \leq \epsilon } \geq 1 - e^{-\frac{\delta_1^2}{2} (1-\deltaS)N}. \nonumber
    \end{equation}
\end{theorem}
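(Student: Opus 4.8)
The plan is to recognize this statement as a direct specialization of the generic convergence result \autoref{thm2}: the standing hypotheses already include \autoref{AA2} (with the given $\deltaS$) and the bound $\deltaS < \frac{c}{(c+1)^2}$, so the entire task is to confirm that \autoref{AA3}, \autoref{AA4} and \autoref{AA5} also hold for \autoref{alg:R-ARC} under the true-iteration notion of \autoref{def:true:CBGN:SparseHess}, and then to transcribe the conclusion of \autoref{thm2} into the gradient-norm language used here.

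First I would dispatch \autoref{AA5}: the monotone decrease of function values is exactly \autoref{tmp-2022-1-15-1}, whose proof only uses \autoref{succStepDecrease} and is insensitive to how true iterations are defined, so it transfers verbatim. Next, for \autoref{AA3} I would reuse the argument of \autoref{lem:cubic:sub:A2} with $\alphaLow = \frac{2(1-\theta)}{\LH}$ from \eqref{eq:alphaLow:CBGN}: on a true iteration before convergence with $\alphaK \leq \alphaLow$ the ratio $\rho_k$ is well defined and satisfies $\rho_k \geq \theta$, so the iteration is successful. The single ingredient that must be re-sourced is the nondegeneracy $\normTwo{\SKTransposedsKHat} > 0$ that makes $\rho_k$ well defined; in this section it is supplied by the strictly positive lower bound of \autoref{spHessLem} rather than by \autoref{stepSizeSubEmbed}. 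Finally, \autoref{AA4} is the content of the lemma immediately preceding the theorem, which combines \autoref{succStepDecrease} with \autoref{spHessLem} to produce the per-step decrease $h(\epsilon, \alphaK)$ of \eqref{tmp-2022-1-14-10}.

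With \autoref{AA2}--\autoref{AA5} in hand, and with $\alphaMin$ and $\newL$ taken from \autoref{lem::alphaMin}, I would invoke \autoref{thm2} verbatim: for any $\deltaOne \in (0,1)$ with $\gDeltaSDeltaOne > 0$ (the $g$ of \eqref{eqn:gDeltaSDeltaOneDef}) and any $N$ meeting the displayed lower bound, it yields $\probability{N \geq \nEps} \geq 1 - \chernoffLowerExponential$. The closing move is purely notational: in this section $\nEps = \min\set{k : \normTwo{\grad f(x_{k+1})} \leq \epsilon}$, so $\set{N \geq \nEps}$ coincides with $\set{\min_{k\leq N}\normTwo{\grad f(x_{k+1})} \leq \epsilon}$, giving the asserted probability bound.

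I do not expect a genuine obstacle, as the substance lives entirely in \autoref{spHessLem} and the lemma establishing \autoref{AA4}; the only care required is bookkeeping — checking that the decrease function fed into \autoref{thm2} is evaluated at $\alphaZero\gammaOne^{c+\newL}$, that the prefactor $\gDeltaSDeltaOne$ and the exponent match those in \eqref{eqn:tmp33}, and, conceptually, remembering that \autoref{AA2} is \emph{assumed} here rather than derived: verifying it would demand extra structural hypotheses (sparse Hessian rows together with a sampling ensemble making \eqref{smallSketchedHessian} and \eqref{skGradLowerBound_sparse_Hess} likely), which this deliberately conceptual appendix leaves abstract.
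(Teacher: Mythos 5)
Your proposal matches the paper's own treatment: the paper likewise notes that \autoref{AA3} and \autoref{AA5} carry over from the first-order section, establishes \autoref{AA4} via \autoref{spHessLem} with the decrease function \eqref{tmp-2022-1-14-10}, assumes \autoref{AA2}, and then applies \autoref{thm2} directly. Your extra care about re-sourcing the nondegeneracy of the step from \autoref{spHessLem} and about \autoref{AA2} being assumed rather than proved is consistent with (indeed slightly more explicit than) the paper's argument.
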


\begin{remark}
In order to satisfy \autoref{AA2}, we require that at each iteration, with positive probability, \eqref{smallSketchedHessian}, \eqref{skGradLowerBound_sparse_Hess} and \eqref{S_k_norm_bound_Sparse_hess} hold. This maybe achieved for objective functions whose Hessian only has a few non-zero rows, with $S$ being a scaled sampling matrix. Because if $\hessFK$ only has a few non-zero rows, we have that $S_k \hessFK=0$ with positive probability, thus satisfying  \eqref{smallSketchedHessian}. Scaled sampling matrices also satisfy \eqref{skGradLowerBound_sparse_Hess} and \eqref{S_k_norm_bound_Sparse_hess} (See Lemmas 4.4.13 and 4.4.14 in \cite{Zhen-PhD}).
\end{remark}

\newpage
\section{CUTEst Problems}

\begin{table}[H]
\centering
\begin{tabular}{c l c c c c}  
\toprule
\# & Problem & $d$ & $f(x_{0})$ & $f(x^{*})$ & Parameters \\
\midrule
1 & ARWHEAD & 1000 & 2997.000000 & 0.000000 & N = 1000 \\
2 & BDEXP & 1000 & 270.129225 & 0.000000 & N = 1000 \\
3 & BOX & 1000 & 0.000000 & -177.370594 & N = 1000 \\
4 & BOXPOWER & 1000 & 8805.056207 & 0.000000 & N = 1000 \\
5 & BROYDN7D & 1000 & 3518.841961 & 345.217283 & N/2 = 500 \\
6 & CHARDIS1 & 998 & 132.816667 & 0.000000 & NP1 = 500 \\
7 & COSINE & 1000 & 876.704979 & -99.000000 & N = 1000 \\
8 & CURLY10 & 1000 & -0.063016 & -100316.300000 & N = 1000 \\
9 & CURLY20 & 1000 & -0.134062 & -100316.200000 & N = 1000 \\
10 & DIXMAANA1 & 1500 & 14251.000000 & 1.000000 & M = 500 \\
11 & DIXMAANF & 1500 & 20514.875000 & 1.000000 & M = 500 \\
12 & DIXMAANP & 1500 & 35635.810853 & 1.000000 & M = 500 \\
13 & ENGVAL1 & 1000 & 58941.000000 & 0.000000 & N = 1000 \\
14 & FMINSRF2 & 1024 & 27.712415 & 1.000000 & P = 32 \\
15 & FMINSURF & 1024 & 28.430936 & 1.000000 & P = 32 \\
16 & NCB20 & 1010 & 2002.002000 & 916.060534 & N = 1000 \\
17 & NCB20B & 1000 & 2000.000000 & 1676.011207 & N = 1000 \\
18 & NONCVXU2 & 1000 & 2592247505.400722 & 2316.808400 & N = 1000 \\
19 & NONCVXUN & 1000 & 2672669991.246090 & 2316.808400 & N = 1000 \\
20 & NONDQUAR & 1000 & 1006.000000 & 0.000000 & N = 1000 \\
21 & ODC & 992 & 0.000000 & -0.011327 & (NX, NY) = (31, 32) \\
22 & PENALTY3 & 1000 & 998001817780.515259 & 0.001000 & N/2 = 500 \\
23 & POWER & 1000 & 250500250000.000000 & 0.000000 & N = 1000 \\
24 & RAYBENDL & 1022 & 98.025849 & 96.242400 & NKNOTS = 512 \\
25 & SCHMVETT & 1000 & -2854.345107 & -2994.000000 & N = 1000 \\
26 & SINEALI & 1000 & -0.841471 & -99901.000000 & N = 1000 \\
27 & SINQUAD & 1000 & 0.656100 & -3.000000 & N = 1000 \\
28 & TOINTGSS & 1000 & 8992.000000 & 10.102040 & N = 1000 \\
\bottomrule
\end{tabular}
\caption{The 28 full-rank CUTEst test problems used for data profiles}\label{tab:cutest_fullrank}
\end{table}

\begin{table}[H]
\centering
\begin{tabular}{c l c c c c c}  
\toprule
\# & Problem & $d$ & $r$ & $f(x_{0})$ & $f(x^{*})$ & Parameters \\
\midrule
1 & l-ARTIF & 1000 & 100 & 18.295573 & 0.000000 & N = 100 \\
2 & l-ARWHEAD & 1000 & 100 & 297.000000 & 0.000000 & N = 100 \\
3 & l-BDEXP & 1000 & 100 & 26.525716 & 0.000000 & N = 100 \\
4 & l-BOX & 1000 & 100 & 0.000000 & -11.240440 & N = 100 \\
5 & l-BOXPOWER & 1000 & 100 & 866.246207 & 0.000000 & N = 100 \\
6 & l-BROYDN7D & 1000 & 100 & 350.984196 & 40.122840 & N/2 = 50 \\
7 & l-CHARDIS1 & 1000 & 98 & 12.816667 & 0.000000 & NP1 = 50 \\
8 & l-COSINE & 1000 & 100 & 86.880674 & -99.000000 & N = 100 \\
9 & l-CURLY10 & 1000 & 100 & -0.006237 & -10031.630000 & N = 100 \\
10 & l-CURLY20 & 1000 & 100 & -0.012965 & -10031.630000 & N = 100 \\
11 & l-DIXMAANA1 & 1000 & 90 & 856.000000 & 1.000000 & M = 30 \\
12 & l-DIXMAANF & 1000 & 90 & 1225.291667 & 1.000000 & M = 30 \\
13 & l-DIXMAANP & 1000 & 90 & 2128.648049 & 1.000000 & M = 30 \\
14 & l-ENGVAL1 & 1000 & 100 & 5841.000000 & 0.000000 & N = 100 \\
15 & l-FMINSRF2 & 1000 & 121 & 25.075462 & 1.000000 & P = 11 \\
16 & l-FMINSURF & 1000 & 121 & 30.430288 & 1.000000 & P = 11 \\
17 & l-NCB20 & 1000 & 110 & 202.002000 & 179.735800 & N = 100 \\
18 & l-NCB20B & 1000 & 100 & 200.000000 & 196.680100 & N = 100 \\
19 & l-NONCVXU2 & 1000 & 100 & 2639748.043569 & 231.680840 & N = 100 \\
20 & l-NONCVXUN & 1000 & 100 & 2727010.761416 & 231.680840 & N = 100 \\
21 & l-NONDQUAR & 1000 & 100 & 106.000000 & 0.000000 & N = 100 \\
22 & l-ODC & 1000 & 100 & 0.000000 & -0.019802 & (NX, NY) = (10, 10) \\
23 & l-OSCIGRNE & 1000 & 100 & 306036001.125000 & 0.000000 & N = 100 \\
24 & l-PENALTY3 & 1000 & 100 & 98017980.901195 & 0.001000 & N/2 = 50 \\
25 & l-POWER & 1000 & 100 & 25502500.000000 & 0.000000 & N = 100 \\
26 & l-RAYBENDL & 1000 & 126 & 98.027973 & 96.242400 & NKNOTS = 64 \\
27 & l-SCHMVETT & 1000 & 100 & -280.286393 & -2994.000000 & N = 100 \\
28 & l-SINEALI & 1000 & 100 & -0.841471 & -9901.000000 & N = 100 \\
29 & l-SINQUAD & 1000 & 100 & 0.656100 & -3.000000 & N = 100 \\
30 & l-TOINTGSS & 1000 & 100 & 892.000000 & 10.102040 & N = 100 \\
31 & l-YATP2SQ & 1000 & 120 & 91584.340973 & 0.000000 & N = 10 \\
\bottomrule
\end{tabular}
\caption{The 31 low-rank CUTEst test problems used for data profiles}\label{tab:cutest_lowrank}
\end{table}

\section{Additional Numerical Experiments}
\subsection{Individual problem plots for Gaussian matrices}
\subsubsection{Full-rank problems}

\begin{figure}[H]
    \centering
    \includegraphics[width=0.39\linewidth]{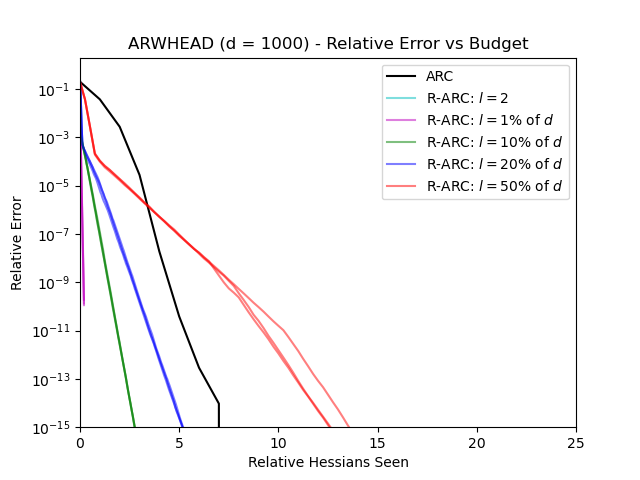}
    \includegraphics[width=0.39\linewidth]{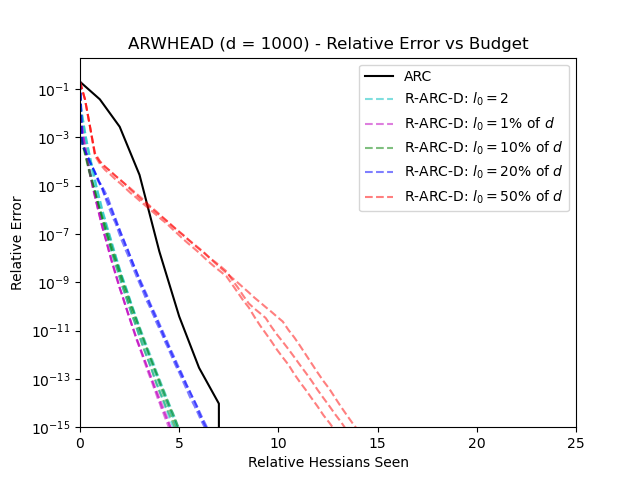}
    \includegraphics[width=0.39\linewidth]{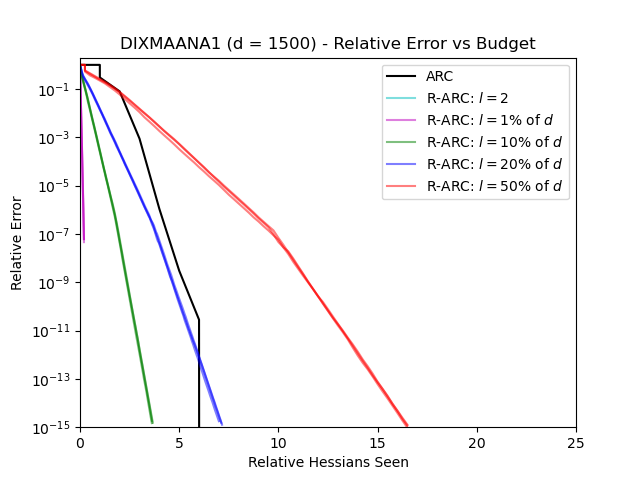}
    \includegraphics[width=0.39\linewidth]{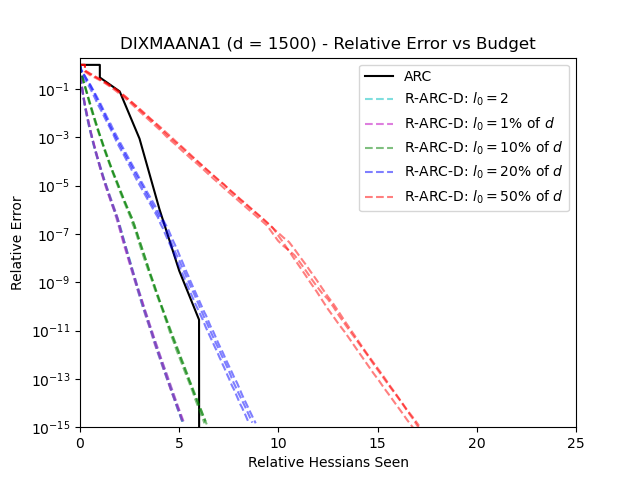}
    \includegraphics[width=0.39\linewidth]{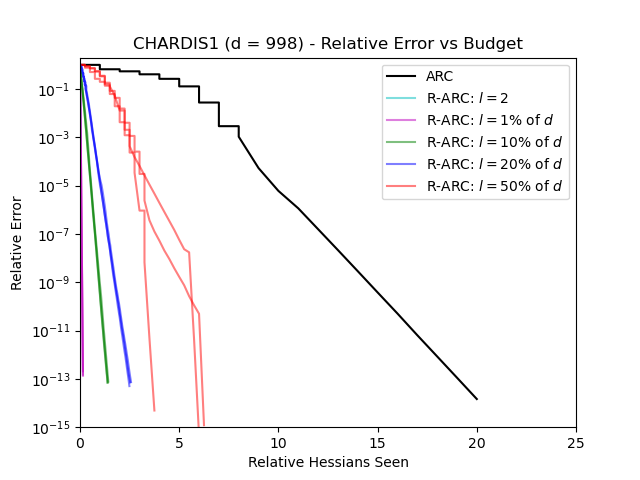}
    \includegraphics[width=0.39\linewidth]{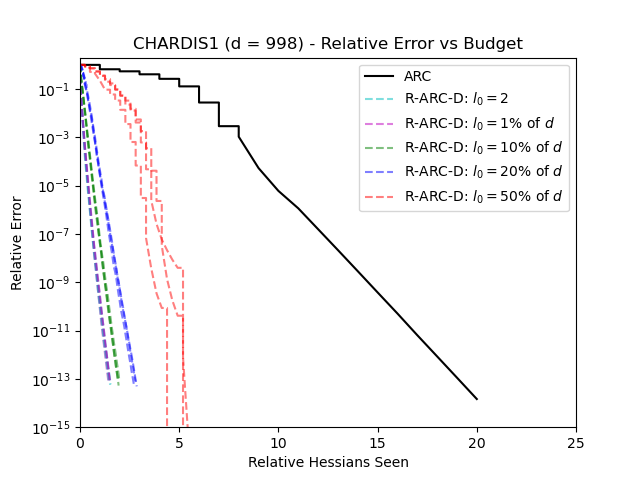}
    \includegraphics[width=0.39\linewidth]{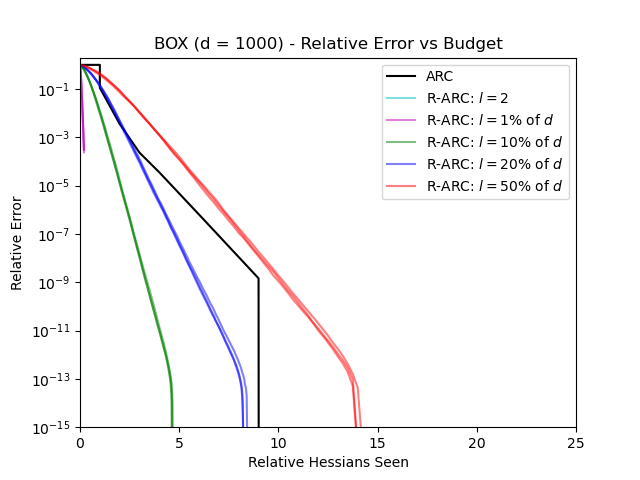}
    \includegraphics[width=0.39\linewidth]{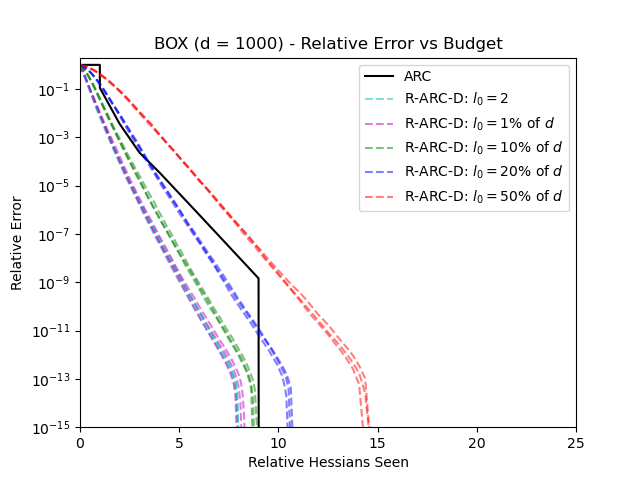}
    \caption{Full-rank individual problems from CUTEst, comparing R-ARC with ARC: budget plots}\label{fig:full_rank_individual_problems}
\end{figure}

\clearpage
\subsubsection{Low-rank problems}

\begin{figure}[H]
    \centering
    \includegraphics[width=0.39\linewidth]{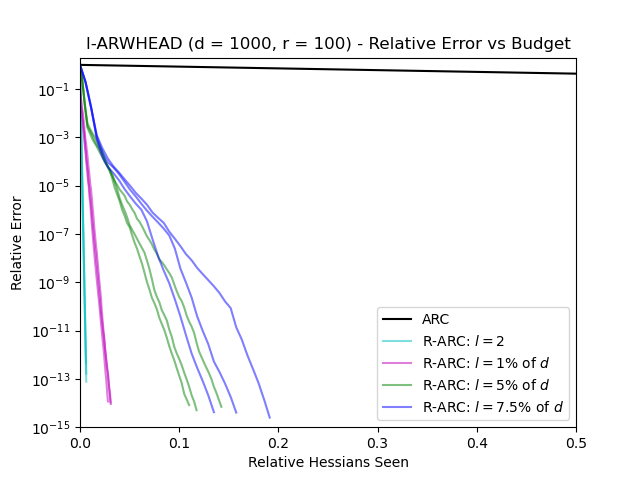}
    \includegraphics[width=0.39\linewidth]{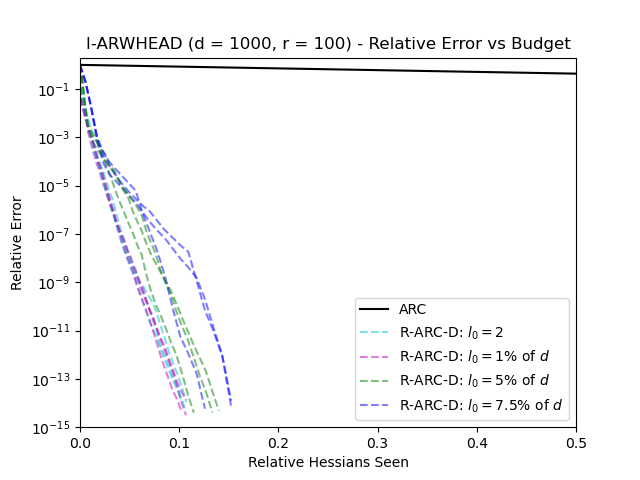}
    \includegraphics[width=0.39\linewidth]{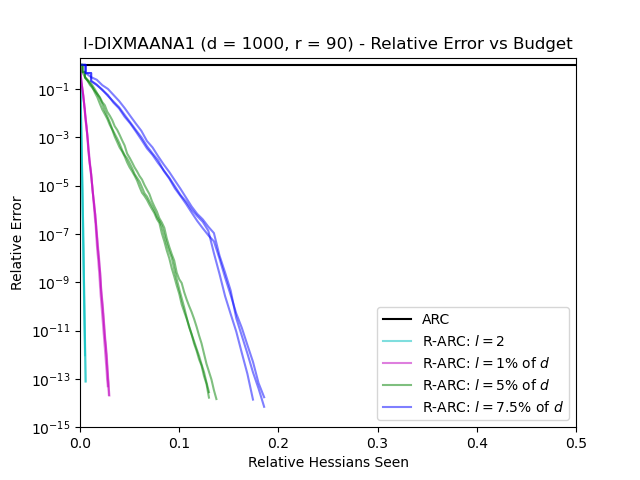}
    \includegraphics[width=0.39\linewidth]{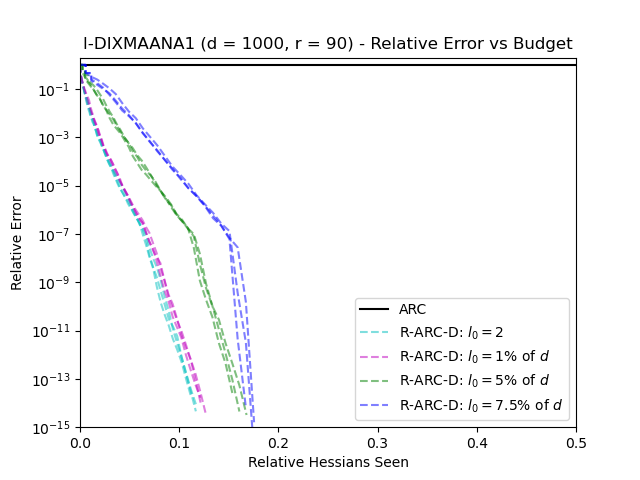}
    \includegraphics[width=0.39\linewidth]{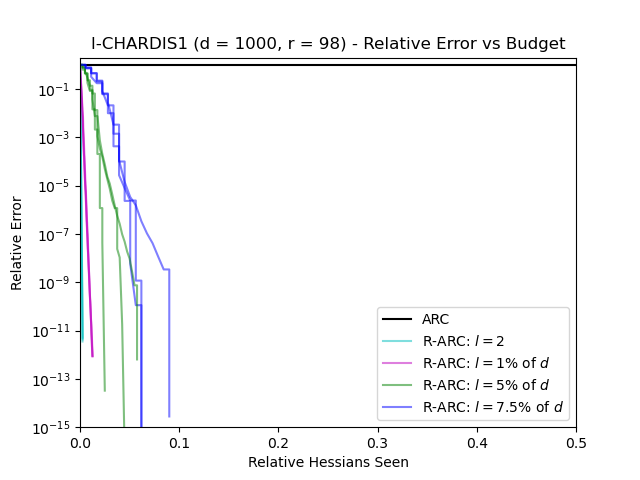}
    \includegraphics[width=0.39\linewidth]{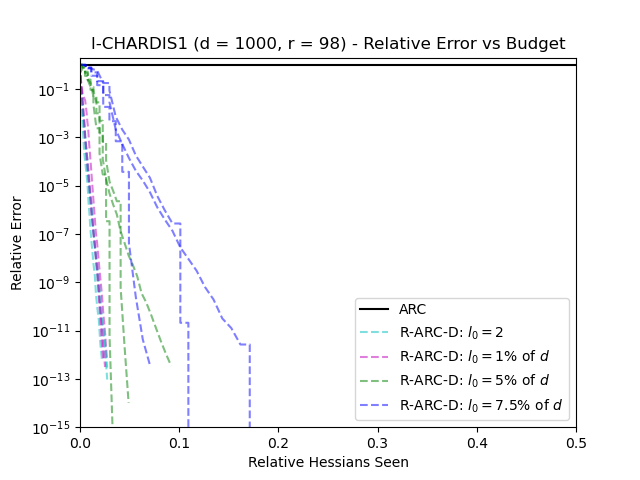}
    \includegraphics[width=0.39\linewidth]{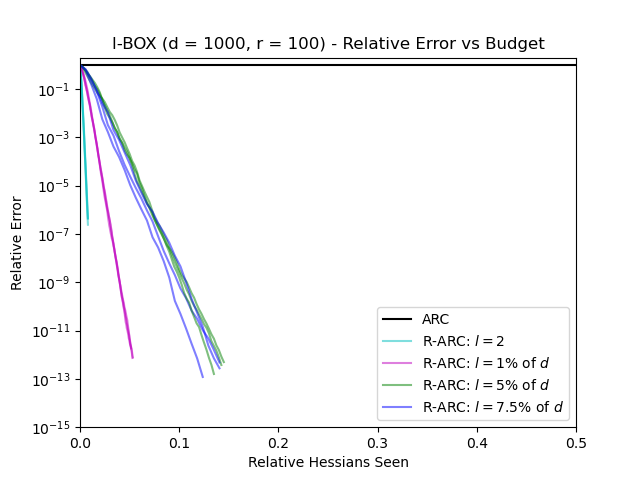}
    \includegraphics[width=0.39\linewidth]{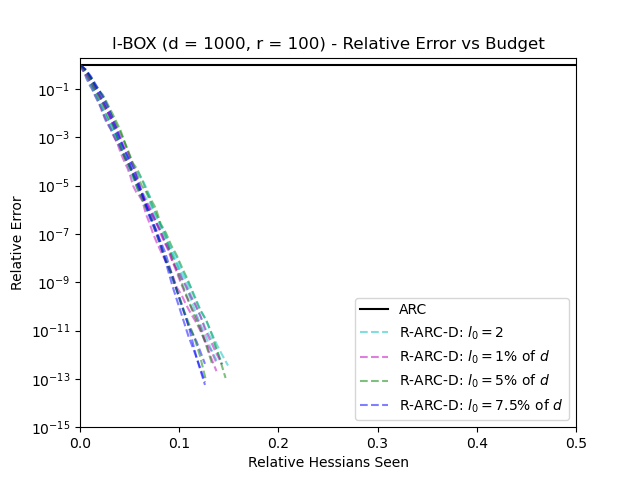}
    \caption{Low-rank individual problems from CUTEst, comparing R-ARC with ARC: budget plots}\label{fig:low_rank_individual_problems}
\end{figure}

\clearpage

\subsection{Sampling matrices}

\subsubsection{Full-rank problems}

\begin{figure}[H]
    \centering
    \includegraphics[width=0.4\linewidth]{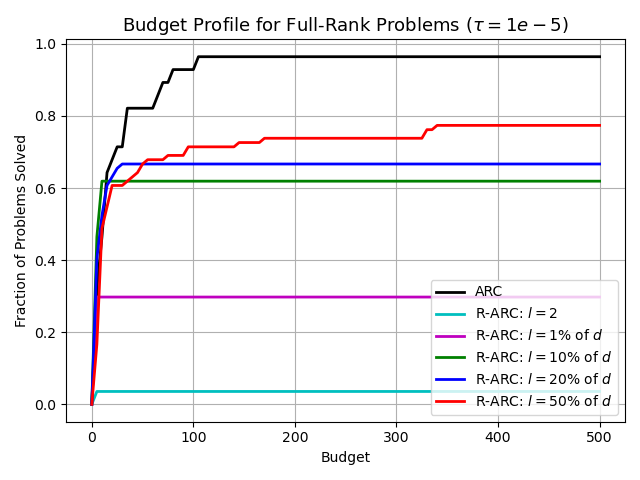}
    \includegraphics[width=0.4\linewidth]{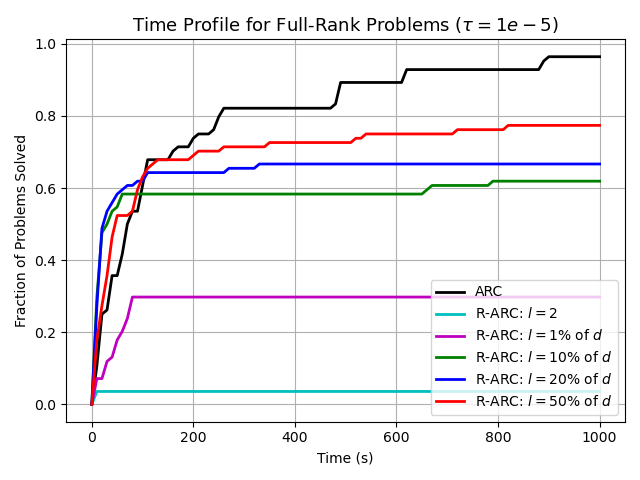}
    \includegraphics[width=0.4\linewidth]{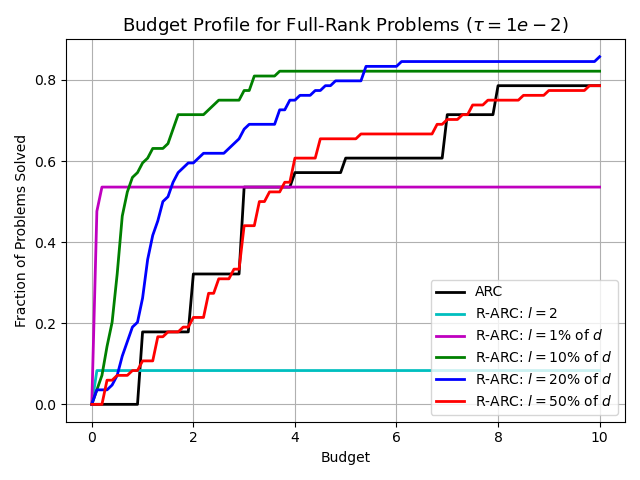}
    \includegraphics[width=0.4\linewidth]{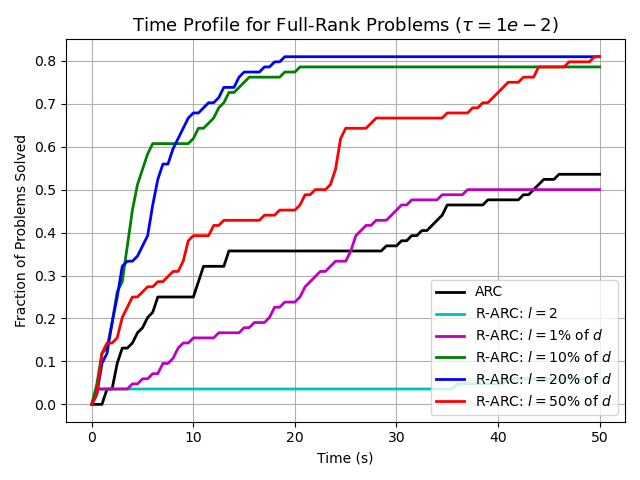}
    \caption{Varying the sketch dimension using scaled sampling matrices in R-ARC; plotting budget and time; full-rank problems}
    \label{fig:sampling_rarc}
\end{figure}

\begin{figure}
    \centering
    \includegraphics[width=0.4\linewidth]{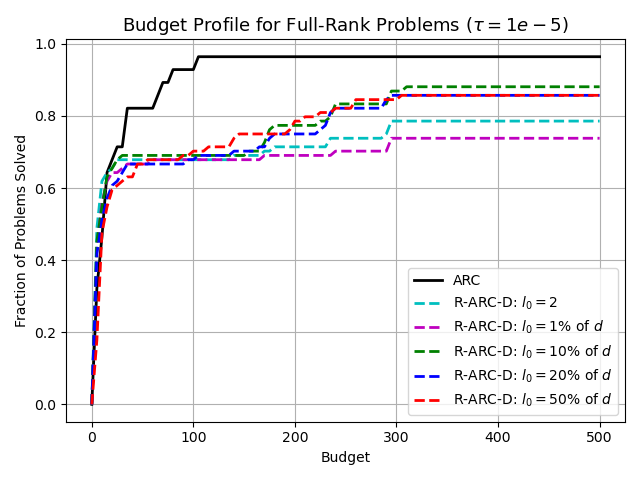}
    \includegraphics[width=0.4\linewidth]{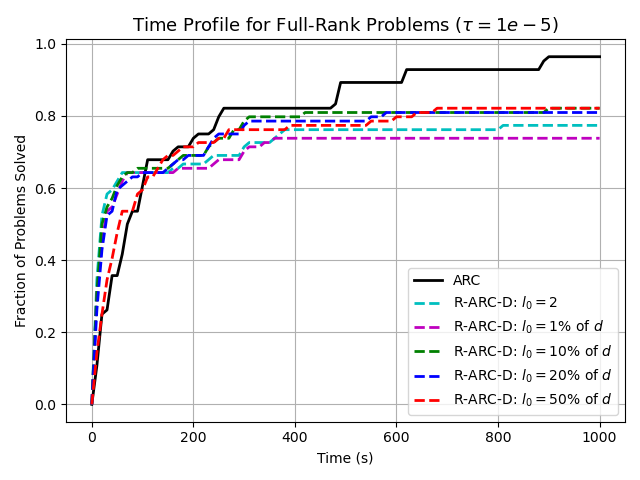 }
    \includegraphics[width=0.4\linewidth]{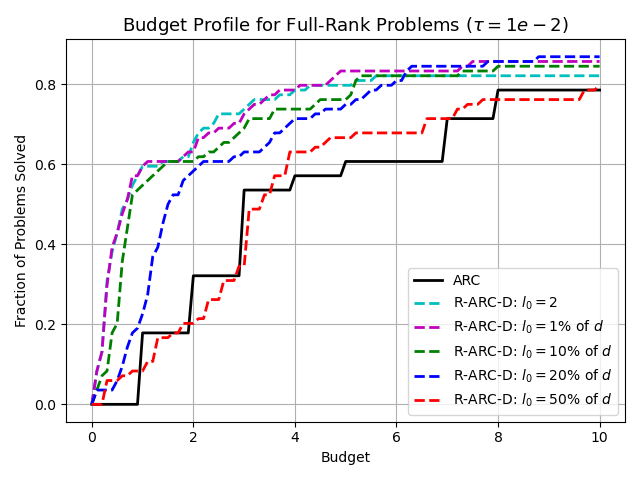}
    \includegraphics[width=0.4\linewidth]{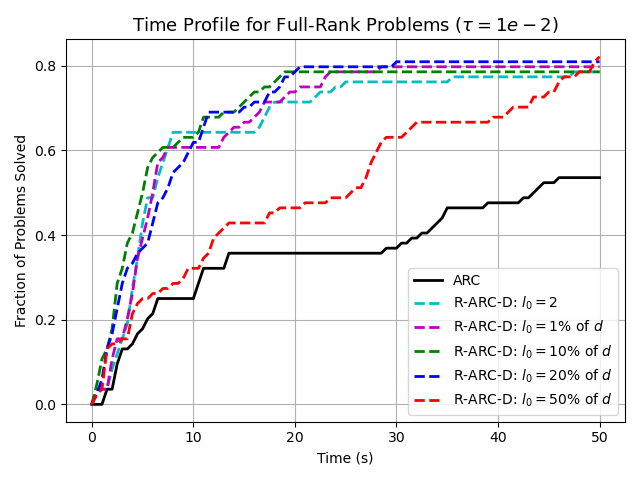}
    \caption{Varying the sketch dimension using scaled sampling matrices in R-ARC-D; plotting budget and time; full-rank problems}
    \label{fig:sampling_rarcd}
\end{figure}

\begin{figure}
    \centering
    \includegraphics[width=0.4\linewidth]{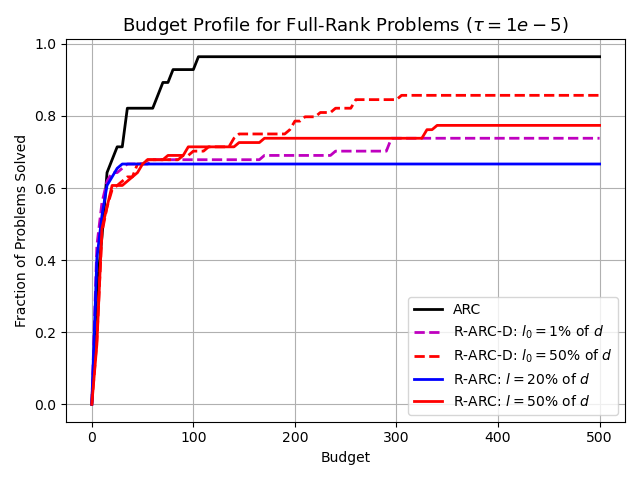}
    \includegraphics[width=0.4\linewidth]{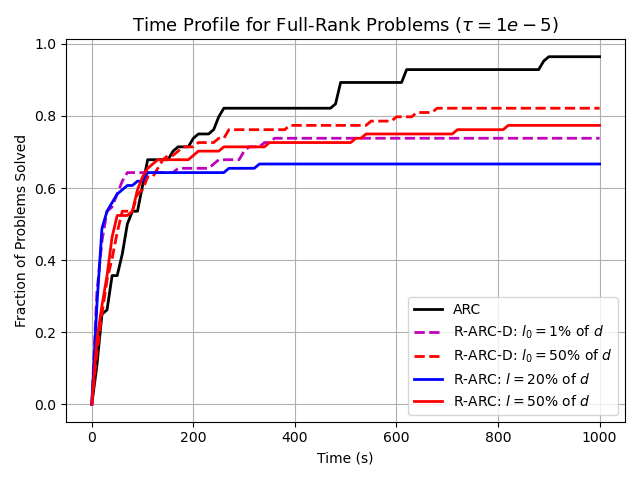 }
    \includegraphics[width=0.4\linewidth]{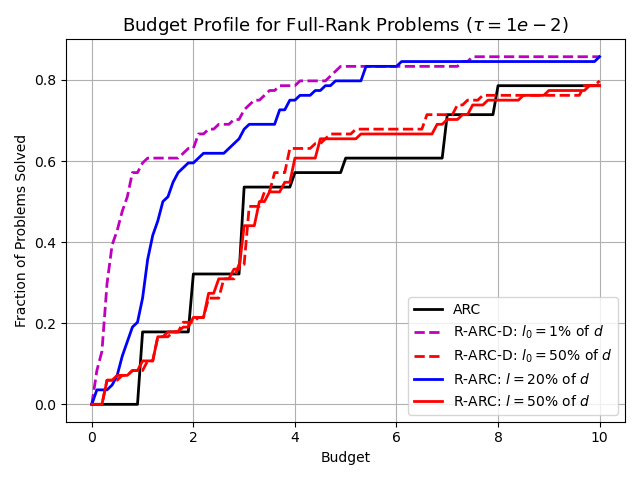}
    \includegraphics[width=0.4\linewidth]{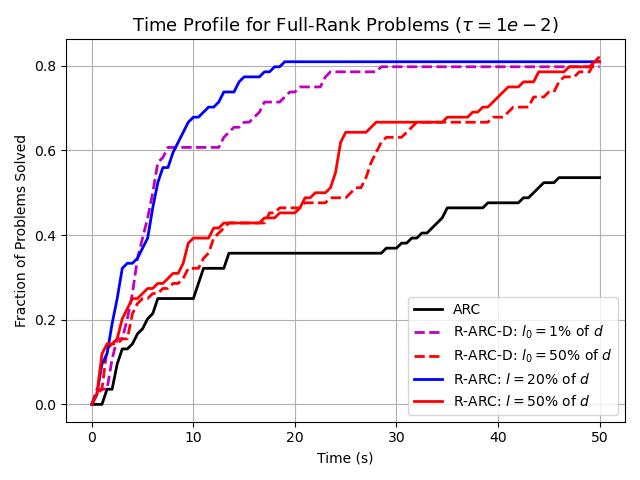}
    \caption{Varying the sketch dimension using scaled sampling matrices in R-ARC/R-ARC-D; plotting budget and time; full-rank problems}
    \label{fig:sampling_rarc_vs_rarcd}
\end{figure}

\clearpage
\subsubsection{Low-rank problems}

\begin{figure}[H]
    \centering
    \includegraphics[width=0.4\linewidth]{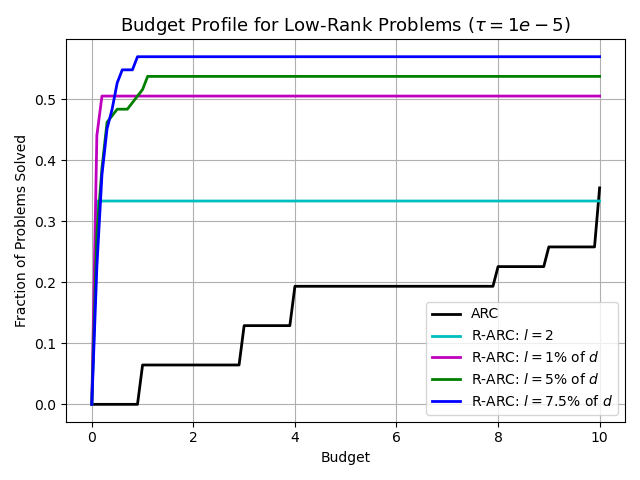}
    \includegraphics[width=0.4\linewidth]{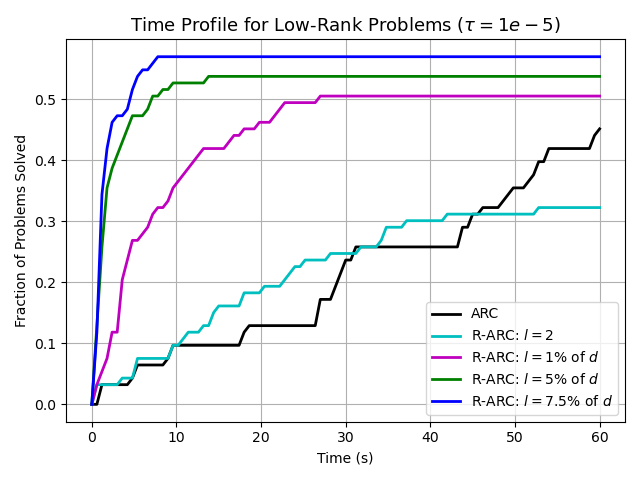}
    \includegraphics[width=0.4\linewidth]{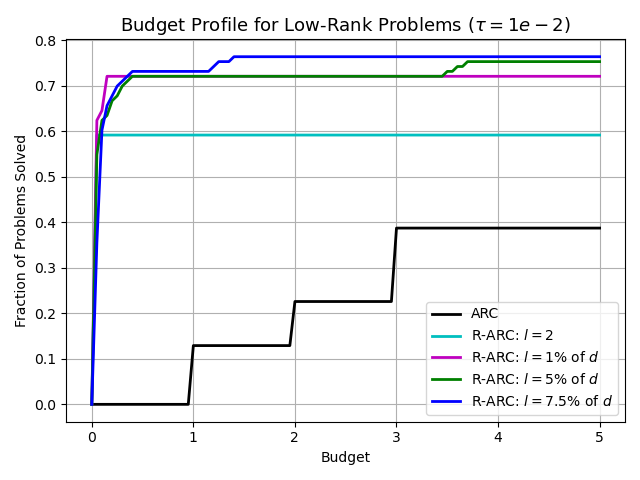}
    \includegraphics[width=0.4\linewidth]{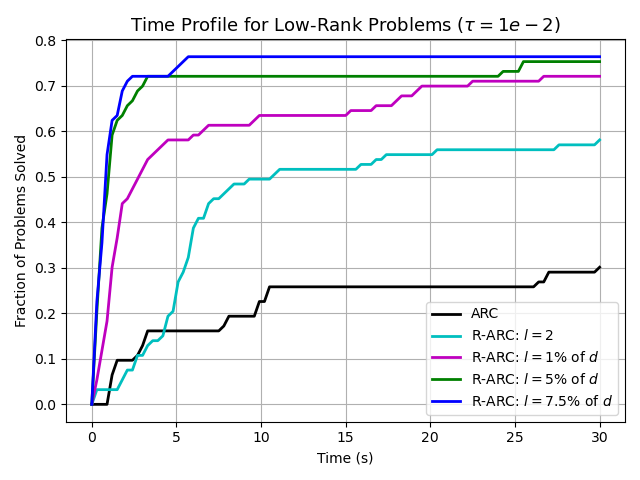}
    \caption{Varying the sketch dimension using scaled sampling matrices in R-ARC; plotting budget and time; low-rank problems}
    \label{fig:lr_sampling_rarc}
\end{figure}

\begin{figure}
    \centering
    \includegraphics[width=0.4\linewidth]{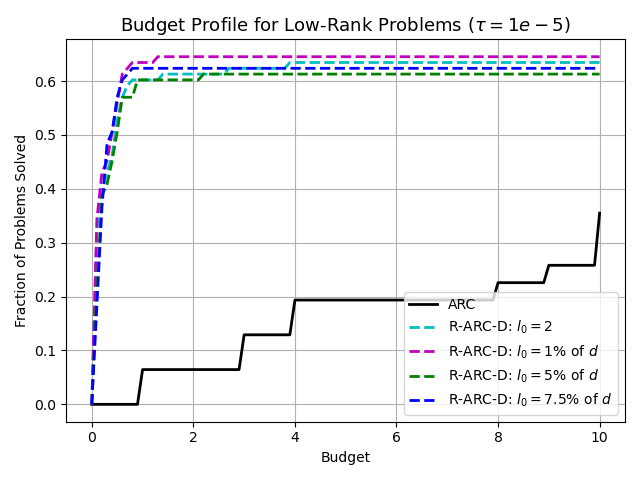}
    \includegraphics[width=0.4\linewidth]{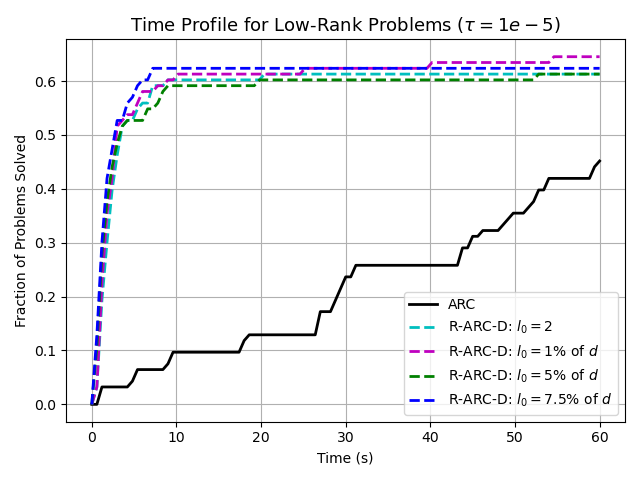 }
    \includegraphics[width=0.4\linewidth]{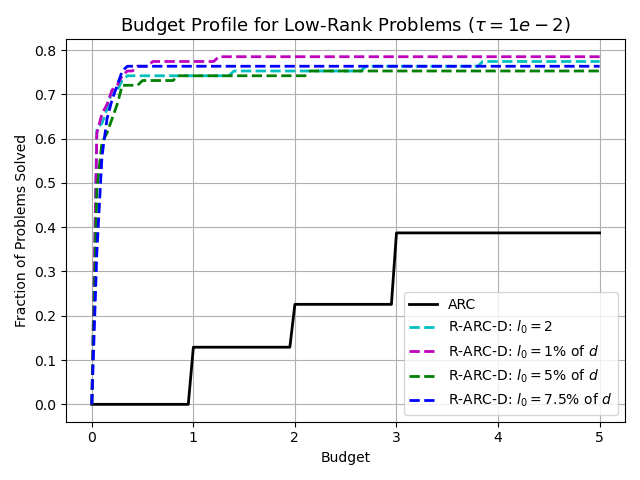}
    \includegraphics[width=0.4\linewidth]{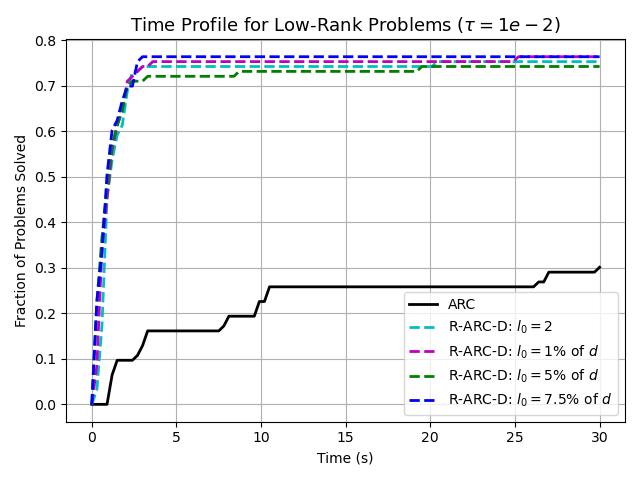}
    \caption{Varying the sketch dimension using scaled sampling matrices in R-ARC-D; plotting budget and time; low-rank problems}
    \label{fig:lr_sampling_rarcd}
\end{figure}

\begin{figure}
    \centering
    \includegraphics[width=0.4\linewidth]{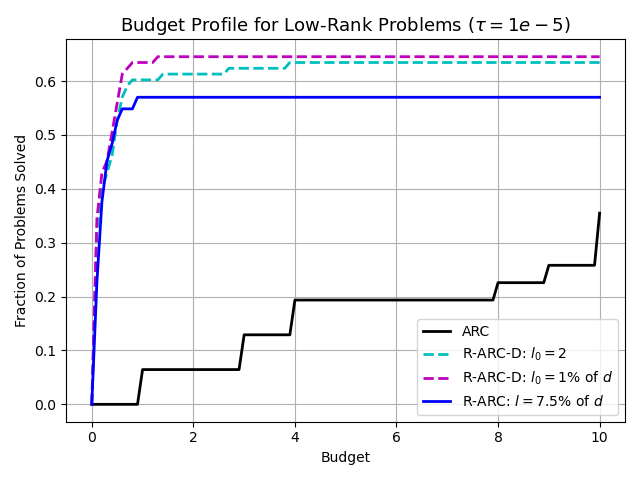}
    \includegraphics[width=0.4\linewidth]{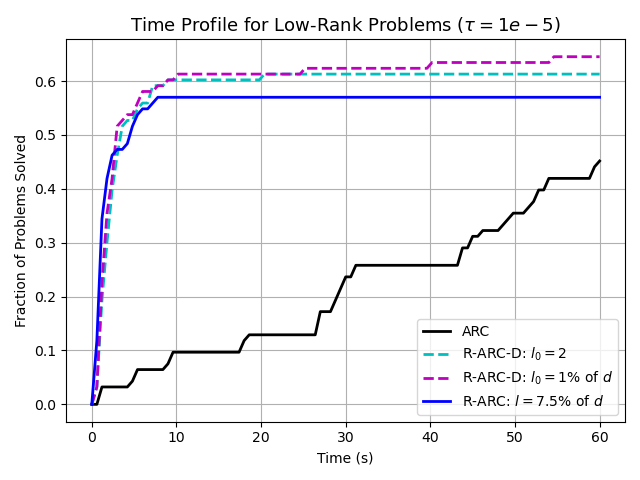 }
    \includegraphics[width=0.4\linewidth]{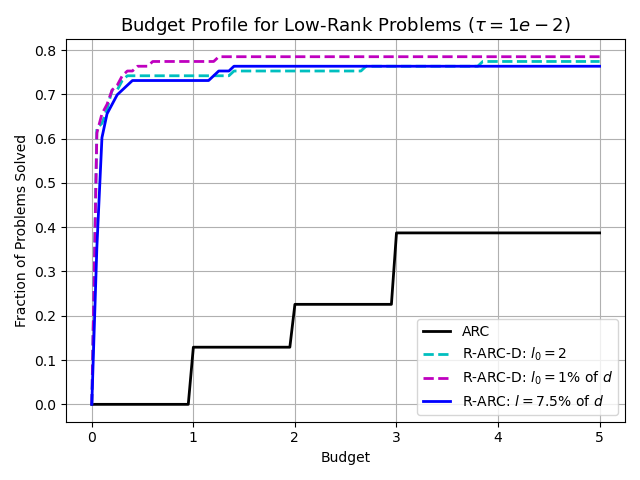}
    \includegraphics[width=0.4\linewidth]{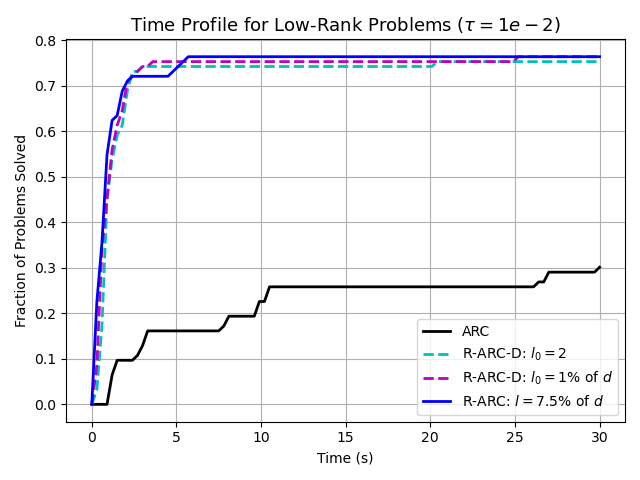}
    \caption{Varying the sketch dimension using scaled sampling matrices in R-ARC/R-ARC-D; plotting budget and time; low-rank problems}
    \label{fig:lr_sampling_rarc_vs_rarcd}
\end{figure}

\clearpage
\subsection{Haar matrices}

\subsubsection{Full-rank problems}

\begin{figure}[H]
    \centering
    \includegraphics[width=0.4\linewidth]{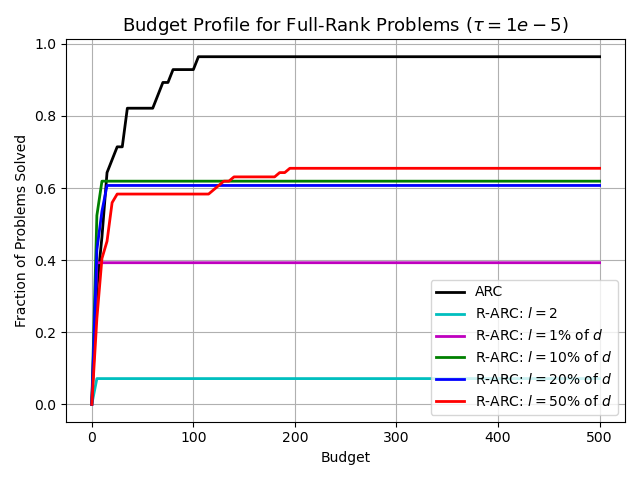}
    \includegraphics[width=0.4\linewidth]{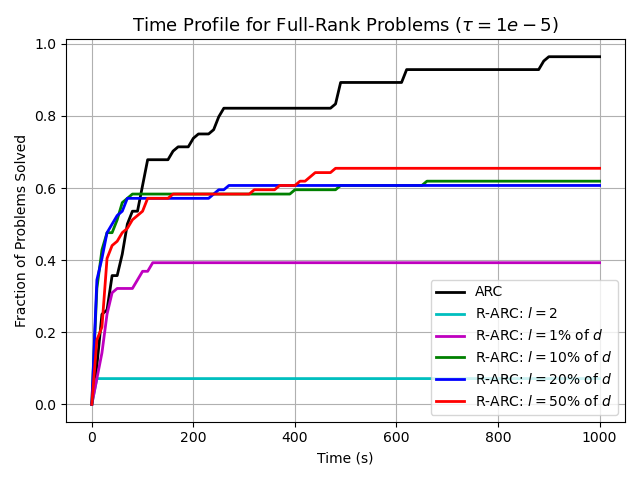}
    \includegraphics[width=0.4\linewidth]{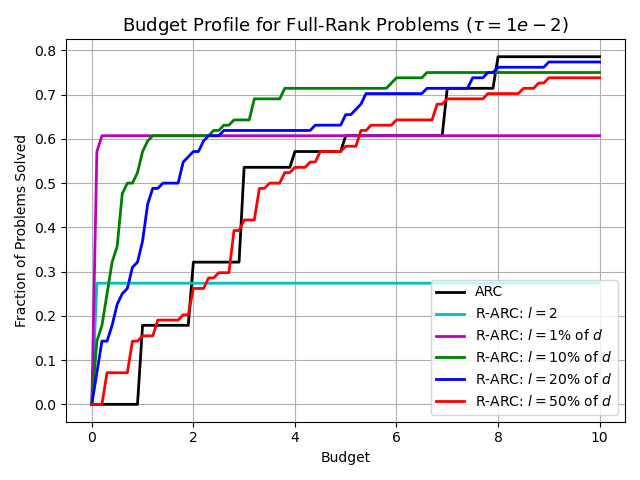}
    \includegraphics[width=0.4\linewidth]{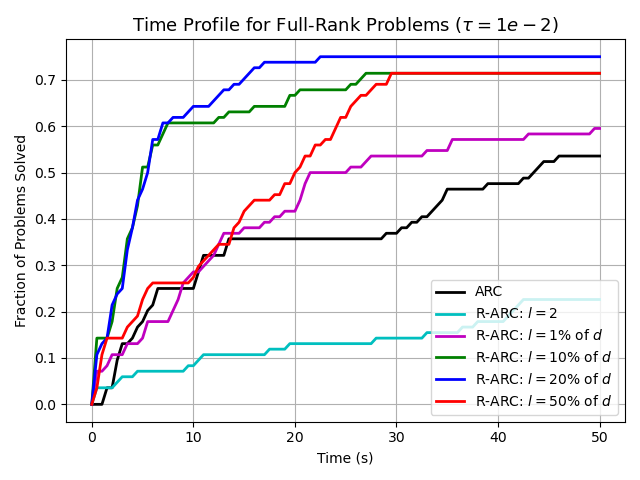}
    \caption{Varying the sketch dimension using scaled Haar matrices in R-ARC; plotting budget and time; full-rank problems}
    \label{fig:haar_rarc}
\end{figure}

\begin{figure}
    \centering
    \includegraphics[width=0.4\linewidth]{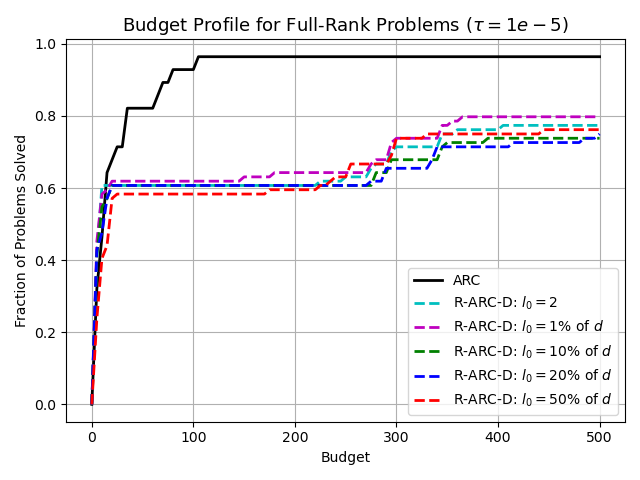}
    \includegraphics[width=0.4\linewidth]{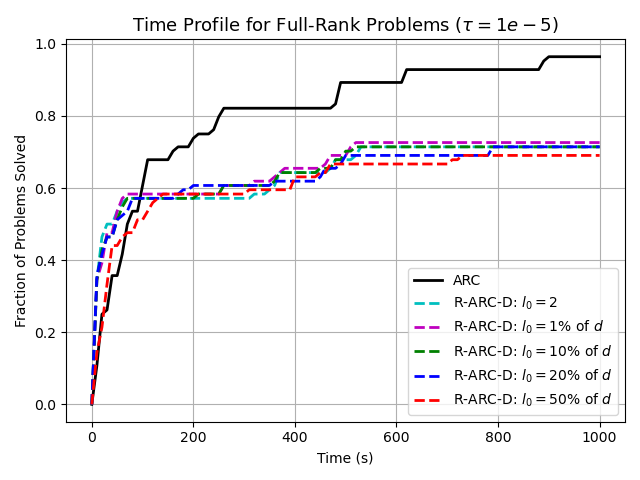 }
    \includegraphics[width=0.4\linewidth]{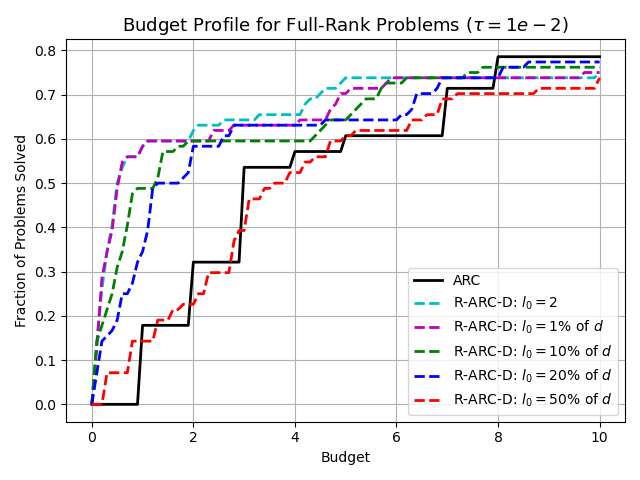}
    \includegraphics[width=0.4\linewidth]{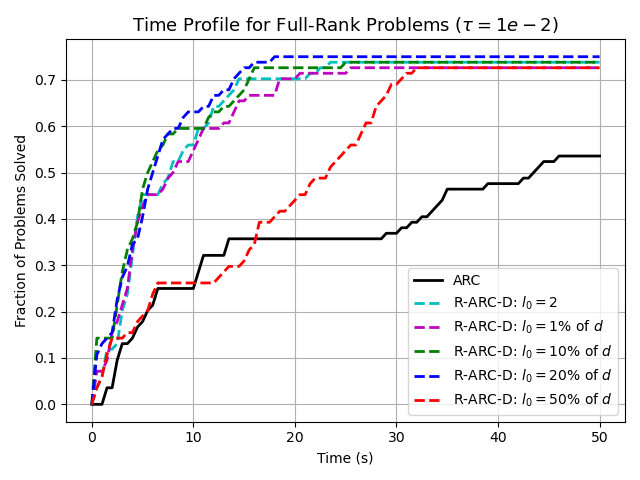}
    \caption{Varying the sketch dimension using scaled Haar matrices in R-ARC-D; plotting budget and time; full-rank problems}
    \label{fig:haar_rarcd}
\end{figure}

\begin{figure}
    \centering
    \includegraphics[width=0.4\linewidth]{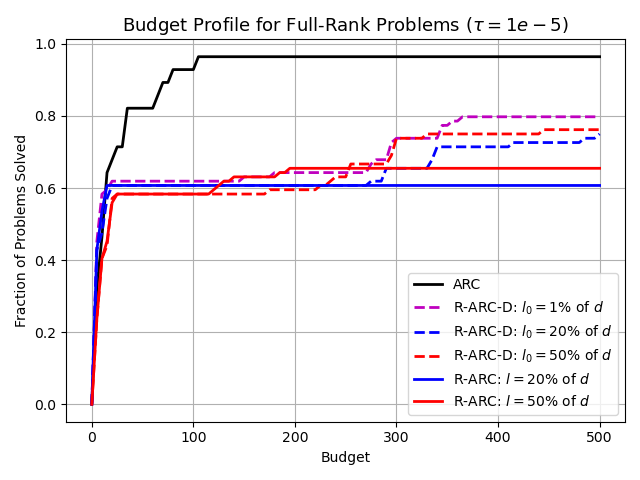}
    \includegraphics[width=0.4\linewidth]{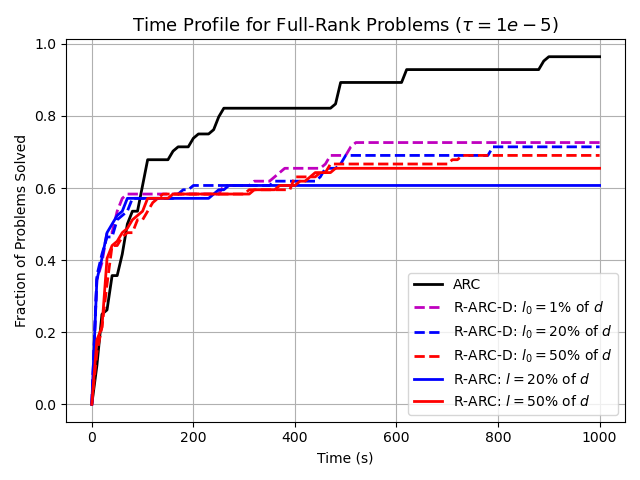 }
    \includegraphics[width=0.4\linewidth]{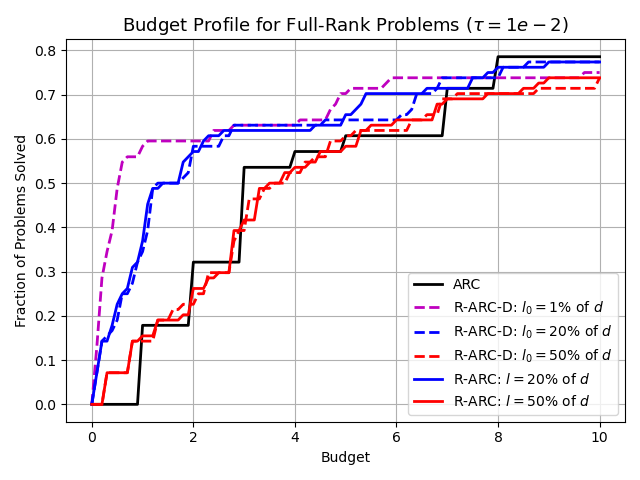}
    \includegraphics[width=0.4\linewidth]{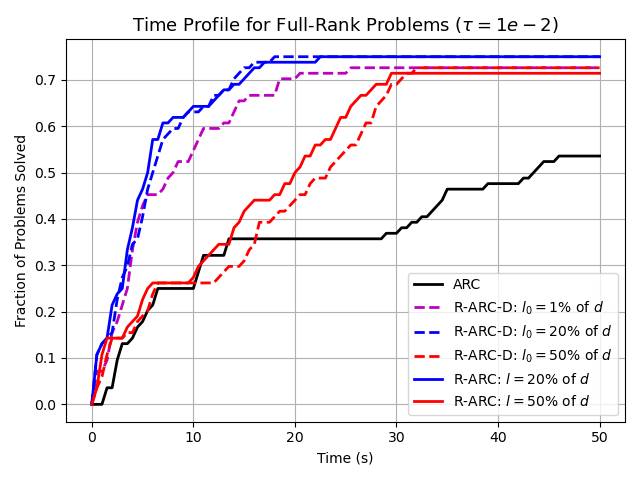}
    \caption{Varying the sketch dimension using scaled Haar matrices in R-ARC/R-ARC-D; plotting budget and time; full-rank problems}
    \label{fig:haar_rarc_vs_rarcd}
\end{figure}

\clearpage

\subsubsection{Low-rank problems}

\begin{figure}[H]
    \centering
    \includegraphics[width=0.4\linewidth]{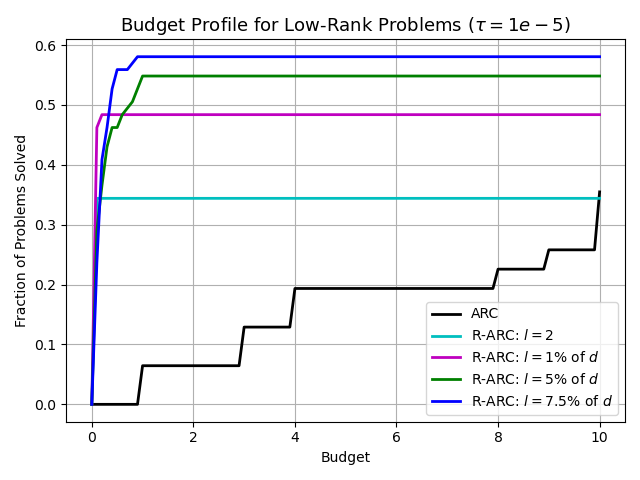}
    \includegraphics[width=0.4\linewidth]{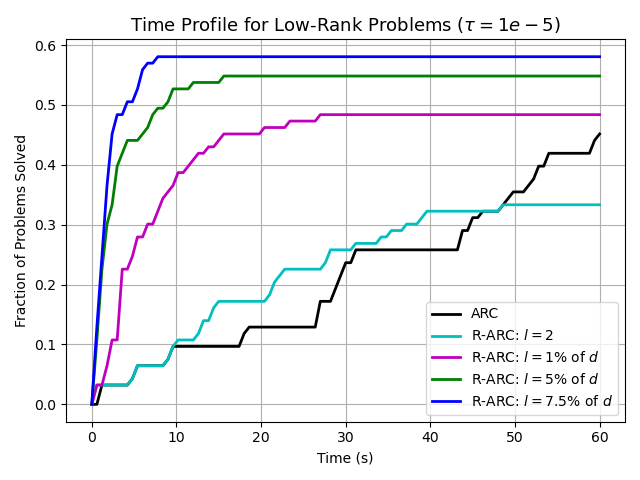}
    \includegraphics[width=0.4\linewidth]{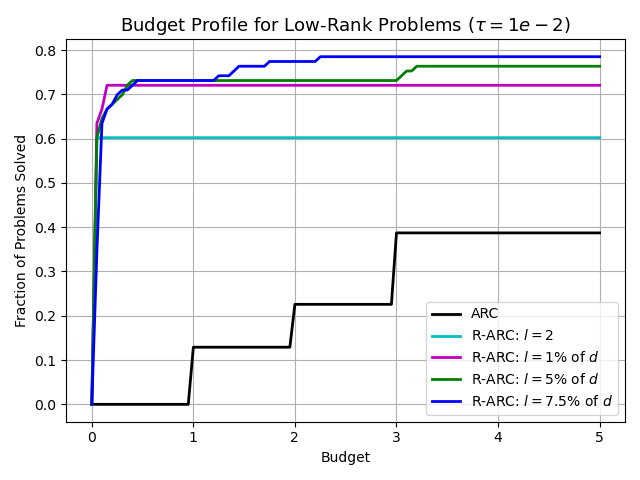}
    \includegraphics[width=0.4\linewidth]{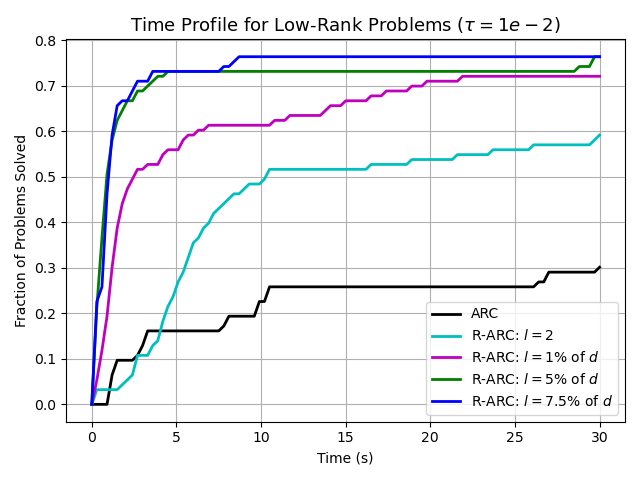}
    \caption{Varying the sketch dimension using scaled Haar matrices in R-ARC; plotting budget and time; low-rank problems}
    \label{fig:lr_haar_rarc}
\end{figure}

\begin{figure}
    \centering
    \includegraphics[width=0.4\linewidth]{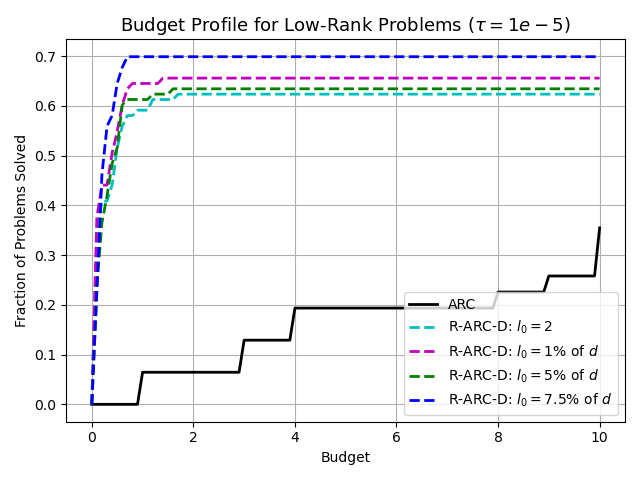}
    \includegraphics[width=0.4\linewidth]{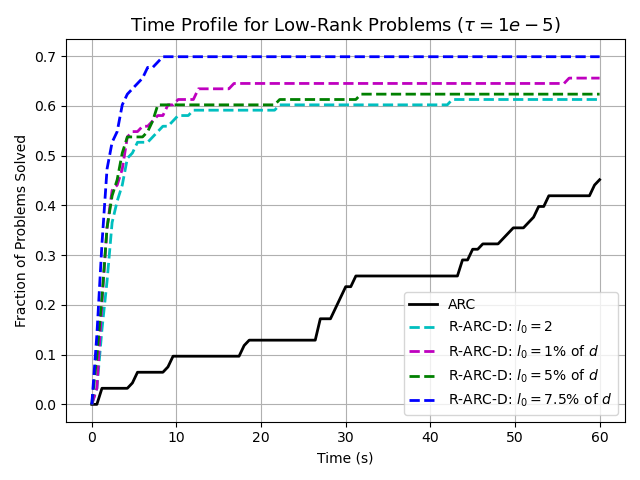 }
    \includegraphics[width=0.4\linewidth]{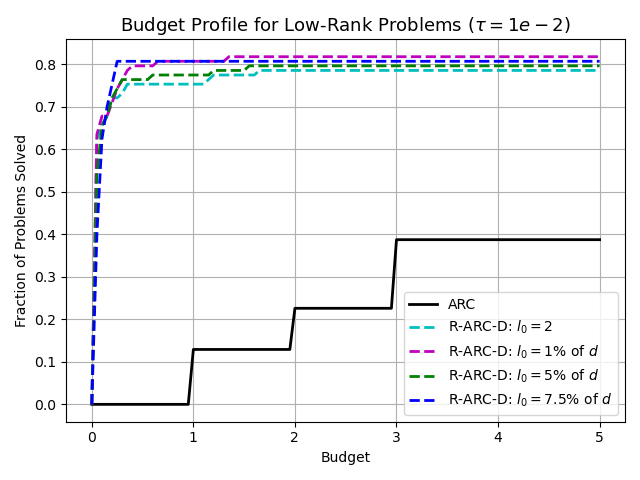}
    \includegraphics[width=0.4\linewidth]{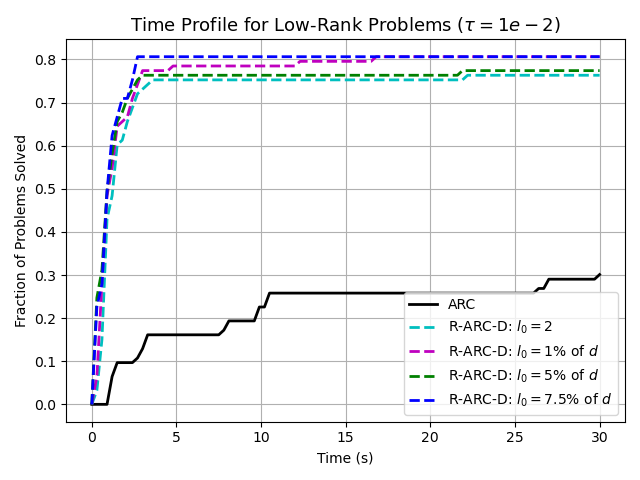}
    \caption{Varying the sketch dimension using scaled Haar matrices in R-ARC-D; plotting budget and time; low-rank problems}
    \label{fig:lr_haar_rarcd}
\end{figure}

\begin{figure}
    \centering
    \includegraphics[width=0.4\linewidth]{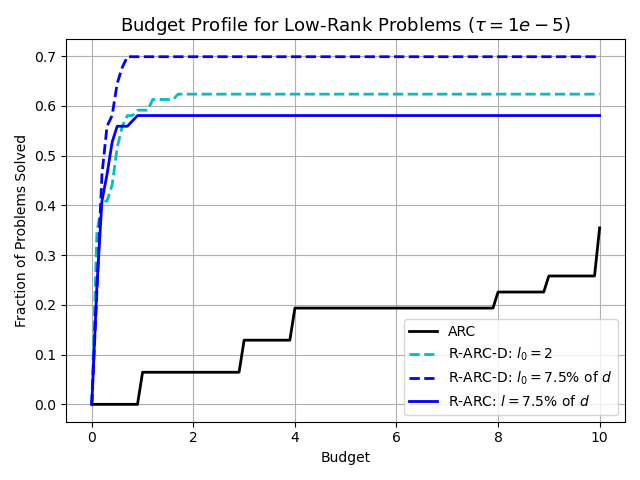}
    \includegraphics[width=0.4\linewidth]{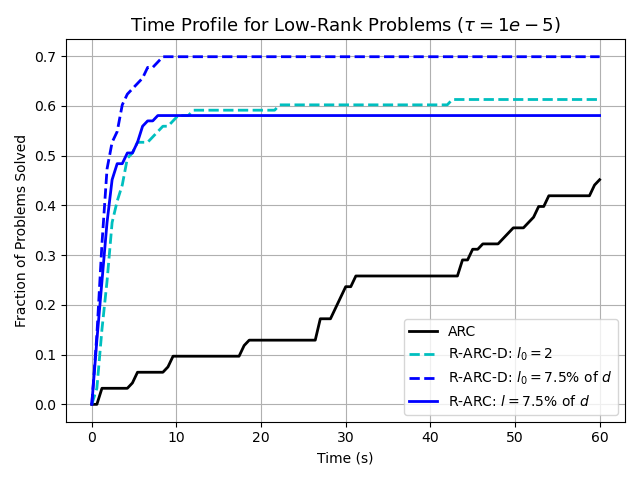 }
    \includegraphics[width=0.4\linewidth]{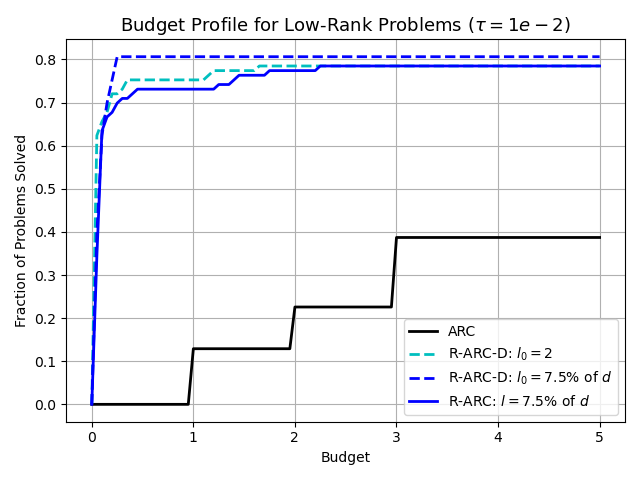}
    \includegraphics[width=0.4\linewidth]{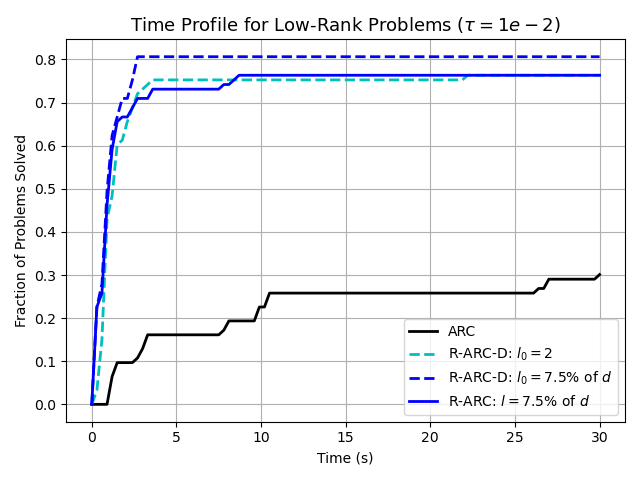}
    \caption{Varying the sketch dimension using scaled Haar matrices in R-ARC/R-ARC-D; plotting budget and time; low-rank problems}
    \label{fig:lr_haar_rarc_vs_rarcd}
\end{figure}

\end{document}